\newcommand{\C}{\mathbb C}
\newcommand{\R}{\mathbb R}
\newcommand{\HH}{{\mathbb{H}}}
\def\a'{\`a}
\def\e'{\`e}
\def\o'{\`o}
\def\u'{\`u}
\newtheorem{teor}{Theorem}[section]
\newtheorem{coro}[teor]{Corollary}
\newtheorem{lemma}[teor]{Lemma}
\newtheorem{prop}[teor]{Proposition}
\newtheorem{defi}[teor]{Definition}
\theoremstyle{definition}
\newtheorem*{Acknow}{Acknowledgments}
\begin{document}

\def\pqK{para-quaternionic K\"ahler \,}
\def\aeH{almost $\epsilon$-Hermitian \,}

 \title{(Para-)Hermitian and (para-)K\"ahler Submanifolds of a para-quaternionic K\"ahler manifold}

\author{Massimo Vaccaro}
\address{Dipartimento
dell'Ingegneria di Informazione e Matematica Applicata,
Universit\a' di Salerno, 84084 - Fisciano (SA) , Italy}
\email{massimo\_vaccaro@libero.it}
\thanks{Work done under the programs of GNSAGA-INDAM of C.N.R. and PRIN07 "Riemannian metrics and differentiable structures" of MIUR (italy)}
\date{\today} 
\keywords{para-quaternionic K\"ahler manifold ,  (almost) Hermitian, (almost) K\"ahler submanifold,
(almost) para-Hermitian, (almost) para-K\"ahler submanifold}
\subjclass[2000]{53C40,53A35,53C15} 

\maketitle

\markboth{ \rm{MASSIMO VACCARO}} {\rm{(PARA-)HERMITIAN AND (PARA-)K\"AHLER SUBMANIFOLD OF A PQK MANIFOLD}}

\textbf{Abstract}: On a para-quaternionic K\"ahler manifold
 $(\widetilde M^{4n},Q,\widetilde g)$, which is first of all a pseudo-Riemannian manifold, a natural definition of
(almost) K\"ahler and (almost) para-K\"ahler submanifold $(M^{2m},\mathcal{J},g)$ can be given where $\mathcal{J}=J_1|_M$
 is a (para-)complex structure on $M$ which is the  restriction of a section $J_1$ of the para-quaternionic bundle $Q$.
In this paper, we extend to such a submanifold $M$  most of the results proved by Alekseevsky and Marchiafava, 2001, where  Hermitian and K\"ahler submanifolds of a quaternionic K\"ahler manifold have been studied.

Conditions for the integrability of an almost (para-)Hermitian structure on $M$ are given.  Assuming that the scalar curvature  of $\widetilde M$ is non zero, we  show that any almost (para-)K\"ahler submanifold is (para-)K\"ahler and
 moreover that  $M$ is (para-)K\"ahler iff it is totally (para-)complex.
Considering   totally (para-)complex submanifolds of maximal dimension $2n$, we identify the second fundamental form $h$ of $M$ with a tensor
$C= J_2 \circ h \in TM\otimes S^2T^*M$ where $J_2 \in Q$ is a compatible para-complex structure anticommuting with $J_1$. This tensor, at any point
$x\in M$, belongs to the first prolongation
$S^{(1)}_\mathcal{J}$ of the space
$S_\mathcal{J} \subset \operatorname{End}T_xM$ of
symmetric endomorphisms anticommuting with $\mathcal{J}$. When $\widetilde M^{4n}$ is a symmetric manifold
 the condition for  a (para-)K\"ahler submanifold $M^{2n}$ to be  locally symmetric is given.
In the case when  $\widetilde M$ is a para-quaternionic space form, it is shown, by using Gauss and Ricci equations, that a (para-)K\"ahler
submanifold $M^{2n}$ is curvature invariant. Moreover it is a locally symmetric Hermitian submanifold
iff  the  $\mathfrak u(n)$-valued 2-form $[C,C]$ is parallel.
Finally  a characterization of  \textit{parallel} K\"ahler  and para-K\"ahler submanifold
of maximal dimension is given.

\section{Introduction}

A pseudo-Riemannian manifold $(M^{4n},g)$ with the holonomy group contained in $Sp_1(\R) \cdot Sp_{n}(\R)$ is called a para-quaternionic
K\"ahler manifold. This means that
there exists a 3-dimensional parallel subbundle $Q \subset End TM$
 of the bundle of endomorphisms which is locally
generated by three skew-symmetric anticommuting endomorphisms
$I,J,K$ satisfying the following \textit{para-quaternionic relations}
\[
-I^2=J^2=K^2=Id , \; IJ=-JI=K.
\]
The subbundle ${Q} \subset End(TM)$ is called a
para-quaternionic structure. Any para-quaternionic K\"ahler manifold
is an Einstein  manifold \cite{AC2}.



 Let $\epsilon= \pm 1$;  a submanifold $(M^{2m},\mathcal{J^\epsilon}= {J^\epsilon}|_{TM},g)$ of the \pqK manifold $(\widetilde M^{4n},Q, \widetilde g)$,
  where $M\subset \widetilde M$ is a submanifold, the induced metric $g=\widetilde g|_M$ is non-degenerate,
and  $J^\epsilon$ is   a section of the bundle $Q_{|M}  \rightarrow  M$ such that
${J^\epsilon}TM=TM, \; \ ({J^\epsilon})^2=\epsilon Id$,
  is called an {\it \aeH submanifold}.


An \aeH submanifold $(M^{2m},\mathcal{J^\epsilon},g)$ of a \pqK manifold $(\widetilde M^{4n},Q, \widetilde g)$
is called
{\it $\epsilon$-Hermitian} if the almost $\epsilon$-complex
structure $\mathcal{J^\epsilon}$ is integrable, {\it almost
$\epsilon$-K\"ahler} if the K\"ahler form $F=g\circ
\mathcal{J^\epsilon}$ is closed and {\it $\epsilon$-K\"ahler} if
$F$ is parallel. Note that  $\epsilon$-K\"ahler submanifolds are minimal (\cite{AC1}).

We will always assume that $\widetilde M^{4n}$ {\it has non zero reduced scalar
curvature $\nu=scal/(4n(n+2))$}.

In  section 3 we study an \aeH submanifold
$(M^{2m},\mathcal{J^\epsilon},g)$ of the \pqK manifold $\widetilde
M^{4n}$ and give the necessary and sufficient condition  to
be $\epsilon$-Hermitian.
If  furthermore $M$ is
analytic, we show that a sufficient condition for integrability is that $\textrm{codim}\, \overline{T_xM}>2$ at some point $x \in M$ where
by $\overline{T_xM}$ we denote the maximal $Q_x$-invariant subspace of $T_xM$. Then, as an application,
we prove that, if  the set $U$ of points $x\in M$
where the Nijenhuis tensor of $\mathcal{J^\epsilon}$ of an \aeH
submanifold of dimension $4k$ is not zero is open and dense in $M$
and $\overline{T_xM}$ is non degenerate, then
$M$ is a  para-quaternionic submanifold.

In fact, by extending a classical result  of quaternionic geometry (see \cite{A}, \cite{G}), we show that a non degenerate
para-quaternionic submanifold of a para-quaternionic K\"ahler
manifold is  totally geodesic, hence a para-quaternionic K\"ahler
submanifold.

In section 4, we give two equivalent necessary and sufficient conditions for an
\aeH manifold to be $\epsilon$-K\"ahler. We prove that an almost
$\epsilon$-K\"ahler submanifold $M^{2m}\,$ of a
\pqK manifold $\widetilde M^{4n}$ is $\epsilon$-K\"ahler and,
hence, a minimal submanifold (see \cite{AC1}) and give some local characterizations
of such a submanifold (Theorem \ref{almost Hermitian to be Kaehler}).
In Theorem \ref{second fundamental form of a Kaehler submanifold} we prove
that the second fundamental form $h$ of  a  $\epsilon$-K\"ahler  sub\-ma\-ni\-fold  $M$
satisfies the fundamental identity
\[
h(\mathcal{J^\epsilon} X,Y)=J^\epsilon
h(X,Y) \qquad \qquad \forall \, X,Y \in TM
\]
and that, conversely, if the above identity  holds on an \aeH sub\-ma\-ni\-fold
$M^{2m}$ of $\widetilde M^{4n}$  then $M^{2m}$ is either a
$\epsilon$-K\"ahler submanifold or a para-quaternionic (K\"ahler)
submanifold and these cases cannot happen simultaneously. In
particular,  we prove that an \aeH submanifold $M$ is
$\epsilon$-K\"ahler if and only if it is {\it totally
$\epsilon$-complex}, i.e. it satisfies the condition
$J_2T_xM \, \bot \, T_xM \quad \forall \, x\in M
$, where $J_2 \in Q$ is a compatible para-complex structure anticommuting with $J^\epsilon$.

In section 5, we study an $\epsilon$-K\"ahler submanifold $M$  of
maximal dimension $2n$ in a \pqK manifold $(\widetilde
M^{4n},Q,\widetilde g)$ (still assuming $\nu \neq 0$). Using the field of isomorphisms $J_2: TM
\rightarrow T^\bot M$ between the tangent and the normal bundle,
we identify, as in \cite{AM}, the second fundamental form $h$ of $M$ with a tensor
$C=J_2\circ h \in TM\otimes S^2T^*M$. This tensor, at any point
$x\in M$, belongs to the first prolongation
$S^{(1)}_\mathcal{J^\epsilon}$ of the space
$S_\mathcal{J^\epsilon} \subset \operatorname{End}T_xM$ of
symmetric endomorphisms anticommuting with $\mathcal{J^\epsilon}$.
Using the  tensor $C$, we present the Gauss-Codazzi-Ricci e\-qua\-tions in a simple form and derive from it
 the necessary and sufficient conditions for
the $\epsilon$-K\"ahler submanifold $M$  to be parallel and to be
curvature invariant (i.e. $ \widetilde R_{XY}Z \in TM, \;
\forall \, X,Y,Z \in TM $). In subsection 5.4 we study  a maximal
$\epsilon$-K\"ahler submanifold $M$ of a (locally) symmetric \pqK
space $\widetilde M^{4n}$ and  get the necessary and sufficient
conditions for $M$ to be a locally symmetric manifold in
terms of the tensor $C$. In particular, if $\widetilde M^{4n}$ is a quaternionic space form, then the
$\epsilon$-K\"ahler submanifold $M$ is curvature invariant. In this case,
$M$ is symmetric if and only if the  2-form
$$[C,C]: X\wedge Y \mapsto [C_X,C_Y] \qquad  \quad X,Y
\in TM,
$$
with values in the unitary algebra of the $\epsilon$-Hermitian structure and that satisfies the first and the second Bianchi identity, is
parallel. Moreover  $M$ is a
totally $\epsilon$-complex totally geodesic submanifold of
 the  quaternionic space form $\widetilde M^{4n}$
if and only if
\[\text{\rm Ric}_M=\frac{\nu}{2}(n+1)g\]
(see Proposition \ref{Ricci curvature of a Kaehler submanifold of a paraquaternionic space form}).

In Section 6 we  characterize a maximal $\epsilon$-K\"ahler submanifold $M$ of the \pqK manifold
$\widetilde M^{4n}$ with parallel non zero
 second fundamental form $h$, or shortly, {\it parallel $\epsilon$-K\"ahler
 submanifold}. In terms of the tensor $C$, this means that
$$
\nabla_XC = -\epsilon \omega(X)\mathcal{J^\epsilon} \circ C, \qquad
\qquad X \in TM
$$
where $\omega= \omega_1 |_{TM}$  and $\nabla$ is the Levi-Civita
connection of $M$. When  $(M^{2n},\mathcal J,g)$, where $ \mathcal J=\mathcal J^\epsilon, \epsilon=-1$, is a parallel not totally
geodesic K\"ahler submanifold, the covariant tensor $g\circ C$ has the form $gC=q+\overline q$
where $q \in S^3(T_x^{*1,0}M)$ (resp. $\bar q \in S^3(T_x^{*0,1}M)$) is a holomorphic (resp. antiholomorphic) cubic form. We prove
that any parallel, not totally geodesic, K\"ahler submanifold $(M^{2n},\mathcal J,g)$
of a \pqK manifold $(\widetilde{M}^{4n}, Q, g)$ with $\nu \neq 0$
 admits a pair of parallel holomorphic line subbundle
$L=\text{span}_{\mathbb C}(q)$ of the bundle $S^3T^{*1,0}M$
and $\overline{L}=\text{span}_{\mathbb C}(\overline{q})$ of the bundle $S^3T^{*0,1}M$
such that the connection induced on $L$ (resp. $\overline{L}$) has the curvature $R^L=-i\nu
g\circ \mathcal J=-i\nu F$ (resp. $R^{\overline{L}}=i\nu
g\circ \mathcal J=i\nu F$). In case  $(M^{2n},\mathcal J,g)$ where $ \mathcal J=\mathcal J^\epsilon, \epsilon=+1$, is a parallel not totally
geodesic para-K\"ahler submanifold of $(\widetilde{M}^{4n}, Q, \widetilde g)$
we have
$
gC=q^{+} + q^{-} \in S^3(T^{*+}M) + S^3(T^{*-}M) \,
$ where $TM=T^{+} + T^{-}$ is the bi-Lagrangean decomposition of the tangent bundle.
We prove that, in this case,  the pair of real line subbundle
$L^+:=\R q^+ \subset S^3(T^{*+}M) $ and $L^-:=\R q^- \subset
S^3(T^{*-}M) )$ are globally defined on $M$ and parallel w.r.t the
Levi-Civita connection which defines a connection $\nabla^{L^+}$ on $L^+$  (resp.
$\nabla^{L^-}$ on $L^-$) whose curvature is
\[
\begin{array}{l} 
R^{L^+}= \nu F, \quad (\text{resp.} \quad  R^{L^-}= -\nu F).
\end{array}
\]

\section{Para-quaternionic K\"ahler manifolds} \label{section para-quaternionic Kaehler manifolds}

For a more detailed study of para-quaternionic K\"ahler manifolds
see \cite{39}, \cite{AC1}, \cite{LD}, \cite{DJS}, \cite{IMV}. Moreover for a survey on para-complex geometry see \cite{AMT}, \cite{30}.
\begin{defi} {\rm{(\cite{AC1})}}
Let $(\epsilon_1, \epsilon_2, \epsilon_3)=(-1,1,1)$ or a
permutation thereof. An \textbf{almost para-quaternionic
structure} on a differentiable manifold $\widetilde
 M$ (of di\-men\-sion $2m$) is a
rank 3 subbundle $Q \subset End T \widetilde
M$, which is locally generated by
three anticommuting fields of endomorphism $J_1, J_2, J_3=J_1
J_2$, such that  $J_\alpha^2=\epsilon_\alpha Id$. Such a triple
will be called a \textbf{standard basis} of $Q$. A linear
connection $\widetilde
 \nabla$ which preserves $Q$
 is called an \textbf{almost para-quaternionic
connection}. An almost para-quaternionic structure $Q$ is called a
\textbf{para-quaternionic structure} if $ \widetilde
M$ admits a
para-quaternionic connection i. e. a \textit{torsion-free}
connection which preserves $Q$. An \textbf{(almost)
para-quaternionic manifold} is a manifold endowed with an (almost)
para-quaternionic structure.
\end{defi}

Observe that $J_\alpha J_\beta=\epsilon_3 \epsilon \gamma
J_\gamma$ where $(\alpha,\beta,\gamma)$ is a cyclic permutation of
(1,2,3).

\begin{defi} {\rm{(\cite{AC1})}}
An \textbf{(almost) para-quaternionic Hermitian manifold}
$(\widetilde M,Q,\widetilde
 g)$ is a pseudo-Riemannian manifold $(\widetilde
 M,\widetilde
 g)$ endowed
with an (almost) para-quaternionic structure $Q$ consisting of
skew-symmetric endomorphisms. The non degeneracy of the metric
implies that $\dim \widetilde
 M=4n$ and the signature of $\widetilde
 g$ is neutral.  $(\widetilde
 M^{4n},Q,\widetilde
 g)$, $n>1$, is called a
\textbf{para-quaternionic K\"ahler manifold} if the Levi-Civita
connection preserves $Q$.
\end{defi}

\begin{prop} \label{curvature in pqKm} {\rm{(\cite{AC2})}} The curvature tensor $\widetilde
 R$ of a
para-quaternionic K\"ahler ma\-ni\-fold $(\widetilde
 M,Q,\widetilde
 g)$, of dimension
$4n>4$, at any point admits a decomposition
\begin{equation} \label{curvature tensor of pqKm}
\widetilde  R= \nu  R_0 + W,
\end{equation}
where $\nu= \frac{scal}{4n(n+2)}$ is the reduced scalar curvature,
\begin{equation} \label{curvature tensor of the para-quaternionic projective space}
R_0(X,Y):=\frac{1}{2} \sum_\alpha \epsilon_\alpha \widetilde g(J_\alpha X,Y)
J_\alpha + \frac{1}{4}( X \wedge Y -\sum_\alpha \epsilon_\alpha
J_\alpha X \wedge J_\alpha Y), \quad X,Y \in T_pM,
\end{equation}
is the curvature tensor of the para-quaternionic projective space
of the same di\-men\-sion as $\widetilde M$ and $W$ is a trace-free $Q$-invariant
algebraic curvature tensor, where $Q$ acts by derivations. In
particular, $\widetilde R$ is $Q$-invariant.
\end{prop}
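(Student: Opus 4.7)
The plan is to follow the Alekseevsky--Salamon template for quaternionic K\"ahler manifolds, adapted to the para-split signature by using the representation theory of $Sp_1(\R)\cdot Sp_n(\R)$. The holonomy reduction forces $\widetilde R_p \in \Lambda^2 T_p^*M\otimes(\mathfrak{sp}_1(\R)\oplus\mathfrak{sp}_n(\R))$ at each point, and the first Bianchi identity further restricts it to the subspace $\mathcal K(\mathfrak{sp}_1\oplus\mathfrak{sp}_n)$ of formal curvature tensors with values in the holonomy algebra; the decomposition \eqref{curvature tensor of pqKm} then amounts to decomposing this subspace into irreducibles under the holonomy group.

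First I would write $T_pM\cong E\otimes H$ with $\dim_\R E=2n$, $\dim_\R H=2$, and use the identifications $\mathfrak{sp}_1\cong S^2 H$ and $\mathfrak{sp}_n\cong S^2 E$. Since the complexifications of $\mathfrak{sp}_n(\R)$ and of the compact $\mathfrak{sp}(n)$ coincide, the purely algebraic computation reduces to Salamon's: expand $\Lambda^2(E\otimes H)\otimes(S^2 E\oplus S^2 H)$ into isotypic components, impose the Bianchi identity, and find exactly two surviving irreducibles, namely a trivial one-dimensional module and a \emph{Weyl} module $\mathcal W\cong S^4 E$ on which the $Sp_1(\R)$-factor acts trivially. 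Every element of $\mathcal W$ is therefore $Q$-invariant, and by Schur's lemma applied to any $Sp_1(\R)\cdot Sp_n(\R)$-equivariant Ricci contraction $\mathcal W\to S^2 T^*M$ it is automatically trace-free. It remains to check that the tensor $R_0$ of \eqref{curvature tensor of the para-quaternionic projective space} spans the trivial summand: both $\sum_\alpha \epsilon_\alpha \widetilde g(J_\alpha\cdot,\cdot)J_\alpha$ and $X\wedge Y-\sum_\alpha \epsilon_\alpha J_\alpha X\wedge J_\alpha Y$ are manifestly $Sp_1(\R)\cdot Sp_n(\R)$-invariant, and a short calculation verifies the first Bianchi identity for $R_0$. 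This gives $\widetilde R=\lambda R_0+W$ with $W\in\mathcal W$; contracting once against $\widetilde g$ yields $\mathrm{Ric}(R_0)=(n+2)\widetilde g$, so matching scalar curvatures forces $\lambda=\mathrm{scal}/(4n(n+2))=\nu$.

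The main obstacle I anticipate is the representation-theoretic bookkeeping of the Bianchi identity in the para-split setting: one must verify that no extra trivial or $\mathfrak{sp}_1$-valued summands appear beyond those already present in the compact quaternionic case, and that the real (rather than complexified) forms of the groups do not contribute additional invariants. The hypothesis $n>1$ enters precisely here, both to guarantee $\mathcal W\ne 0$ and to rule out low-rank accidents involving $Sp_1(\R)\cdot Sp_1(\R)$. A secondary check, independent of the representation theory, is to verify directly that $R_0$ is actually realized as the curvature of the para-quaternionic projective space by computing the latter from the standard symmetric presentation and comparing with \eqref{curvature tensor of the para-quaternionic projective space}.
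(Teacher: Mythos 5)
The paper does not prove this proposition at all — it is quoted verbatim from \cite{AC2} — and your outline is precisely the standard representation-theoretic argument used in that reference and in the Alekseevsky–Salamon treatment of the quaternionic case: decompose $\mathcal K(\mathfrak{sp}_1\oplus\mathfrak{sp}_n)$ via $T_pM\cong E\otimes H$ into the trivial summand $\R R_0$ plus the Weyl module $S^4E$, with $Q$-invariance and trace-freeness of $W$ following from Schur. The plan is sound, including the normalization check $\operatorname{Ric}(R_0)=(n+2)\widetilde g$ which indeed forces $\lambda=\mathrm{scal}/(4n(n+2))=\nu$.
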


We define a \textbf{para-quaternionic K\"ahler manifold of
dimension 4} as a pseudo-Riemannian  manifold endowed with a
parallel skew-symmetric para-qua\-ter\-nio\-nic K\"ahler  structure
whose curvature tensor admits the decomposition (\ref{curvature
tensor of pqKm}).

Since the Levi-Civita connections $\widetilde  \nabla$ of a para-quaternionic
K\"ahler manifold pre\-ser\-ves the para-quaternionic K\"ahler
structure $Q$, one can write
\begin{equation} \label{nabla J}
\widetilde \nabla J_\alpha= -\epsilon _\beta  \omega_\gamma \otimes J_\beta +
\epsilon _\gamma  \omega_\beta \otimes J_\gamma,
\end{equation}
where the $\omega_\alpha, \; \alpha=1,2,3$ are locally defined
1-forms and  $(\alpha,\beta,\gamma)$ is a cyclic permutation of
(1,2,3). We shall denote by $F_\alpha:=\widetilde g(J_\alpha \cdot,\cdot)$
the \textbf{K\"ahler form} associated with $J_\alpha$ and put
$F_\alpha' :=-\epsilon_\alpha F_\alpha$.

We recall the expression for the action of the curvature operator $\widetilde R(X,Y) , \; X,Y
\in T \widetilde M$ of $\widetilde M$, on $J_\alpha$:
\begin{equation}\label{action of the curvature operator}
\begin{array}{l}
[\widetilde R(X,Y),J_\alpha]  =
\epsilon_3 \nu(-\epsilon_\beta F_\gamma' (X,Y) J_\beta +
\epsilon_\gamma F_\beta' (X,Y) J_\gamma)
\end{array}
\end{equation}
where $(\alpha,\beta,\gamma)$ is a cyclic permutation of (1,2,3).

\begin{prop} {\rm{(\cite{AC1})}}
The locally defined K\"ahler forms satisfy the following
struc\-tu\-re equations
\begin{equation} \label{structure equations in pqKm}
\nu F_\alpha' := -\epsilon_\alpha \nu F_\alpha= \epsilon_3 ( d
\omega_\alpha - \epsilon_\alpha \omega_\beta  \wedge
\omega_\gamma),
\end{equation}
where $(\alpha,\beta,\gamma)$ is a cyclic permutation of (1,2,3).
\end{prop}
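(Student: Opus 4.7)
The plan is to derive the structure equations as the integrability condition for $\widetilde\nabla J_\alpha$, by comparing two independent computations of the action of the curvature operator on $J_\alpha$: one via the Ricci identity applied to the connection formula (\ref{nabla J}), and the other via the already-stated formula (\ref{action of the curvature operator}).

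Concretely, I would start from the Ricci identity for the $(1,1)$-tensor $J_\alpha$, namely
\[
[\widetilde R(X,Y),J_\alpha]=\widetilde\nabla_X\widetilde\nabla_Y J_\alpha-\widetilde\nabla_Y\widetilde\nabla_X J_\alpha-\widetilde\nabla_{[X,Y]}J_\alpha,
\]
and substitute (\ref{nabla J}) into the right-hand side. Expanding by the Leibniz rule produces terms of the form $X(\omega_\gamma(Y))J_\beta$ and $\omega_\gamma(Y)\,\widetilde\nabla_X J_\beta$; for the latter one uses (\ref{nabla J}) again with the cyclic triples $(\beta,\gamma,\alpha)$ and $(\gamma,\alpha,\beta)$ respectively. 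After antisymmetrization in $X,Y$, the differentiated terms assemble into $d\omega_\gamma(X,Y)J_\beta$ and $d\omega_\beta(X,Y)J_\gamma$ (using the intrinsic formula for $d$), while the undifferentiated terms combine into wedge products $(\omega_\alpha\wedge\omega_\beta)(X,Y)$ and $(\omega_\gamma\wedge\omega_\alpha)(X,Y)$.

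A short index chase shows that the $J_\alpha$-coefficient of this expression cancels identically, in agreement with the fact that the right-hand side of (\ref{action of the curvature operator}) has no $J_\alpha$-component. The $J_\beta$-coefficient comes out to
\[
-\epsilon_\beta\,d\omega_\gamma(X,Y)+\epsilon_\gamma\epsilon_\beta\,(\omega_\alpha\wedge\omega_\beta)(X,Y),
\]
and the $J_\gamma$-coefficient is obtained from this by the swap $\beta\leftrightarrow\gamma$. Matching against the $J_\beta$ and $J_\gamma$ coefficients of (\ref{action of the curvature operator}) and using $\epsilon_3^2=1$ gives, after dividing by $-\epsilon_\beta$, the desired identity
\[
d\omega_\alpha-\epsilon_\alpha\,\omega_\beta\wedge\omega_\gamma=\epsilon_3\,\nu F'_\alpha,
\]
i.e.\ the announced structure equation for each $\alpha$.

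The only real obstacle is bookkeeping: one must track three simultaneous cyclic permutations of $(1,2,3)$ together with the signs $\epsilon_\alpha,\epsilon_\beta,\epsilon_\gamma$, and verify that the $J_\alpha$-part of $\widetilde\nabla^2 J_\alpha$ vanishes after antisymmetrization (this is where the relation $\epsilon_\alpha=\epsilon_\beta\epsilon_\gamma\epsilon_3$, implicit in the definition $J_3=J_1J_2$, enters). Everything else is routine applications of the Leibniz rule and the intrinsic formula $d\omega(X,Y)=X\omega(Y)-Y\omega(X)-\omega([X,Y])$.
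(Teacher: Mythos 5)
Your derivation is correct, and it is worth noting that the paper itself gives no proof of this proposition at all: it is simply quoted from \cite{AC1}, so there is no internal argument to compare against. Your route --- apply the Ricci identity to $J_\alpha$, substitute (\ref{nabla J}) and its two cyclic companions, antisymmetrize, and match the $J_\beta$- and $J_\gamma$-coefficients against (\ref{action of the curvature operator}) --- is the standard derivation (it amounts to computing the curvature of the connection induced on the bundle $Q$ in two ways), and the bookkeeping checks out: the $J_\beta$-coefficient gives $d\omega_\gamma-\epsilon_\gamma\,\omega_\alpha\wedge\omega_\beta=\epsilon_3\nu F'_\gamma$ and the $J_\gamma$-coefficient gives the analogous equation for $\omega_\beta$, so running over the three cyclic choices of $\alpha$ yields all three structure equations. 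Two small remarks. First, the vanishing of the $J_\alpha$-part of $\widetilde\nabla^2 J_\alpha$ needs no relation among the $\epsilon$'s: the $J_\alpha$-coefficient of $\widetilde\nabla_X\widetilde\nabla_Y J_\alpha$ is $-\epsilon_\alpha\epsilon_\beta\,\omega_\gamma(X)\omega_\gamma(Y)-\epsilon_\alpha\epsilon_\gamma\,\omega_\beta(X)\omega_\beta(Y)$, which is symmetric in $X,Y$ and dies under antisymmetrization, so your appeal to $\epsilon_\alpha=\epsilon_\beta\epsilon_\gamma\epsilon_3$ at that point is unnecessary. Second, your argument takes (\ref{action of the curvature operator}) as an independent input; this is legitimate here since the paper records that formula beforehand as a consequence of the curvature decomposition of Proposition \ref{curvature in pqKm}, but one should be aware that in \cite{AC1} the two identities are essentially equivalent facets of the same computation, so a fully self-contained proof would first establish (\ref{action of the curvature operator}) from $\widetilde R=\nu R_0+W$ with $W$ commuting with $Q$.
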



 By taking the exterior derivative of (\ref{structure equations in
 pqKm})
we get
\[
\begin{array}{ll}
\nu d F_\alpha' & = \epsilon_3 d ( d \omega_\alpha -
\epsilon_\alpha \omega_\beta  \wedge \omega_\gamma) = -\epsilon_3  ( \epsilon_\alpha d \omega_\beta  \wedge
\omega_\gamma -
\epsilon_\alpha \omega_\beta  \wedge d \omega_\gamma)\\
\end{array}.
\]
Since
$ d \omega_\beta= \epsilon_3 \nu F'_\beta+ \epsilon_\beta
\omega_\gamma \wedge \omega_\alpha$ and  $ d \omega_\gamma= \epsilon_3 \nu F'_\gamma+ \epsilon_\gamma
\omega_\alpha \wedge \omega_\beta$, we get
\[
\nu d F_\alpha'= -\epsilon_3 [ (\epsilon_\alpha \epsilon_3 \nu
F'_\beta \wedge \omega_\gamma)- (\epsilon_\alpha \omega_\beta
\wedge \epsilon_3 \nu F'_\gamma,)]
\]
that is   $\nu [d F'_\alpha -\epsilon_\alpha
(-F'_\beta \wedge \omega_\gamma + \omega_\beta \wedge
F'_\gamma)]=0$. Hence we have the following result.
\begin{prop}
On a para-quaternionic K\"ahler manifold the following integrability conditions hold
\begin{equation} \label{exterior differential of fundamental 2-form}
 \nu [d F'_\alpha -\epsilon_\alpha
(-F'_\beta \wedge \omega_\gamma + \omega_\beta \wedge
F'_\gamma)]=0, \qquad (\alpha,\beta,\gamma)=\text{\rm
cycl}(1,2,3).
\end{equation}
\end{prop}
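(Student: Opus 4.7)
The plan is to derive the identity directly by applying exterior differentiation to the structure equations (\ref{structure equations in pqKm}), following the chain of substitutions already sketched in the paragraph preceding the statement. Fix a cyclic permutation $(\alpha,\beta,\gamma)$ of $(1,2,3)$ and start from
\[
\nu F'_\alpha = \epsilon_3\bigl(d\omega_\alpha - \epsilon_\alpha\,\omega_\beta\wedge\omega_\gamma\bigr).
\]
Apply $d$ to both sides. Since $d^2\omega_\alpha=0$, only the quadratic term on the right survives, giving
\[
\nu\, dF'_\alpha \;=\; -\epsilon_3\epsilon_\alpha\bigl(d\omega_\beta\wedge\omega_\gamma - \omega_\beta\wedge d\omega_\gamma\bigr).
\]

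Next I would substitute for $d\omega_\beta$ and $d\omega_\gamma$ using the two cyclically permuted instances of (\ref{structure equations in pqKm}), namely
\[
d\omega_\beta = \epsilon_3\nu F'_\beta + \epsilon_\beta\,\omega_\gamma\wedge\omega_\alpha,\qquad
d\omega_\gamma = \epsilon_3\nu F'_\gamma + \epsilon_\gamma\,\omega_\alpha\wedge\omega_\beta.
\]
Expanding the right-hand side, the purely cubic-in-$\omega$ contributions are proportional to $(\omega_\gamma\wedge\omega_\alpha)\wedge\omega_\gamma$ and $\omega_\beta\wedge(\omega_\alpha\wedge\omega_\beta)$, both of which vanish because a 1-form wedges to zero with itself. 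The surviving terms are
\[
\nu\, dF'_\alpha \;=\; -\epsilon_3^{2}\epsilon_\alpha\,\nu\bigl(F'_\beta\wedge\omega_\gamma - \omega_\beta\wedge F'_\gamma\bigr),
\]
and using $\epsilon_3^{2}=1$ this rearranges immediately to the claimed relation (\ref{exterior differential of fundamental 2-form}).

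The only real difficulty is sign bookkeeping: one has to track the three sources of signs, namely the factors $\epsilon_\alpha,\epsilon_\beta,\epsilon_\gamma$, the sign from $d(\omega_\beta\wedge\omega_\gamma)=d\omega_\beta\wedge\omega_\gamma-\omega_\beta\wedge d\omega_\gamma$, and the reordering of the wedges in the cubic terms, while preserving the cyclic role of $(\alpha,\beta,\gamma)$. No further geometric input beyond the structure equations is required: the integrability conditions (\ref{exterior differential of fundamental 2-form}) are a purely formal consequence of $d^{2}=0$ applied to (\ref{structure equations in pqKm}), and the triple product $\epsilon_1\epsilon_2\epsilon_3$ never explicitly appears because the cubic terms drop out on their own.
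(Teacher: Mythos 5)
Your proposal is correct and is essentially the same computation the paper itself carries out in the paragraph preceding the proposition: apply $d$ to the structure equation, use $d^2\omega_\alpha=0$, substitute the cyclic instances of the structure equations for $d\omega_\beta$ and $d\omega_\gamma$, and observe that the cubic wedge terms vanish. The sign bookkeeping in your version checks out and matches the paper's.
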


\section{Almost $\epsilon$-Hermitian submanifolds of $\widetilde M^{4n}$}
The definition of an (almost) complex structure on
a differentiable manifold and the condition for its integrability are well known.
We just recall the following other definitions (see \cite{AC1}).
\begin{defi}
An \textbf{(almost)para-complex structure} on a
differentiable ma\-ni\-fold $M$ is a field of endomorphisms $J \in End
TM$ such that $J^2=Id$ and the $\pm 1$-eigenspace distributions
$T^{\pm}M$ of $J$ have the same rank. An almost para-complex
structure is called \textbf{integrable}, or \textbf{para-complex
structure}, if the distributions $T^{\pm}M$ are integrable or,
equivalently, the Nijenhuis tensor $N_J$, defined by
\[
N_J(X,Y)= [JX,JY]-J[JX,Y] -J[X,JY]+[X,Y], \qquad X,Y \in TM
\]
vanishes. An \textbf{(almost)para-complex manifold $(M,J)$} is a
manifold $M$ endowed with an (almost) para-complex structure.
\end{defi}

\begin{defi}
An \textbf{(almost) $\epsilon$-complex structure} $\epsilon \in \{-1,1\}$
on a dif\-fe\-ren\-tia\-ble manifold $M$ of dimension $2n$ is a field of
endomorphisms $J \in End TM$ such that $J^2= \epsilon Id$ and
moreover,  for $\epsilon =+1$  the eigendistributions $T^{\pm}M$
are of rank $n$.
An \textbf{$\epsilon$-complex manifold} is a differentiable
manifold endowed with an integrable (i.e. $N_J=0$)
$\epsilon$-complex structure.
\end{defi}

Consequently,  the notation (almost) $\epsilon$-Hermitian
structure, (almost) $\epsilon$-K\"ahler structure, etc.. will be
used with the same convention.

Let recall that  a submanifold of a pseudo-Riemannian manifold  is non degenerate if it has non degenerate  tangent spaces.

\begin{defi}
Let  $(\widetilde M^{4n},Q,\widetilde g)$  be a \pqK manifold.
A $\widetilde g$-non degenerate submanifold $M^{2m}$ of $\widetilde M$ is called
 an \textbf{\aeH
submanifold} of $\widetilde M$ if
there exists a section $J^\epsilon: M \rightarrow Q_{|M}$ such that
\[{J^\epsilon}TM=TM \qquad  (J^\epsilon)^2= \epsilon Id. \]
We will denote such submanifold  $(M^{2m},\mathcal{J}^{\epsilon},g)$ where
$(g= \widetilde g|_M, \;  \mathcal{J^\epsilon}= J^\epsilon|_M)$.
\end{defi}

For a classification of almost (resp. para-)Hermitian manifolds see \cite{24}, (resp. \cite{25},\cite{28}).

Notice (see \cite{MV},\cite{MV1},\cite{MV2}) that in any point $x \in
M$ the induced metric $g_x=<,>_x$ of an (almost) Hermitian submanifold
 has signature $2p,2q$ with $p+q=m$
whereas  the signature of  the metric    of an (almost)
para-Hermitian
 submanifold  is always neutral $(m,m)$. In both cases then the  induced metric is
\textbf{pseudo}-Riemannian  (and Hermitian).
Keeping in mind this fact,
we will not use the suffix "pseudo" in the following.

For any point $x\in M^{2m}$, we can always include $J^\epsilon$ into a
local frame $(J_1=J^\epsilon, J_2,J_3=J_1 J_2=-J_2 J_1)$ of
$Q$  defined in a neighbourhood $\widetilde U$ of $x$ in
$\widetilde M$ such that $J_2^2=Id$. Such frame will be called
\textbf{adapted} to the submanifold $M$ and in fact, since our considerations are local,
we will assume for simplicity that $\widetilde U \supset M^{2m}$ and
put
\[ F={F_1}_{|M}= g \circ \mathcal J^\epsilon, \quad  \quad \omega={\omega_1}_{|M} \, .\]
Moreover, we have
\begin{equation}
\widetilde \nabla J^\epsilon= - \omega_3 \otimes J_2 - \epsilon \, \omega_2
\otimes J_3
\end{equation}
where $\widetilde \nabla$ indicates the Levi-Civita connection on $\widetilde M$,
 and in complex case ($\epsilon=-1$), from
$(\epsilon_1,\epsilon_2,\epsilon_3)=(-1,1,1)$, we have $J_2
J_3=-J_1, \; J_3 J_1=J_2$ whereas in para-complex case, where
$(\epsilon_1,\epsilon_2,\epsilon_3)=(1,1,-1)$, we have $J_2
J_3=-J_1, \; J_3 J_1=-J_2$.

For any $x \in M$ we denote $\overline {T_x M}$ the maximal
para-quaternionic ($Q$-invariant) subspace of the tangent space
$T_xM$. Note that if
$(J_1,J_2,J_3)$ is an adapted basis in a point $x\in M$ then
$\overline{T_xM}=T_xM\cap J_2T_xM$.

We allow  $\overline {T_x M}$ to be degenerate (even
totally isotropic), hence its  dimension is even (not necessarily a multiple of 4) and
the signature of $g|_{\overline {T_x M}}$ is $(2k,2s,2k)$ where $2s= \dim \ker g $ (see \cite{MV}). We recall that a subspace of a para-quaternionic
vector space $(V,Q)$ is \textit{pure} if it contains no non zero $Q$-invariant subspace.
We  write then
\[ T_xM =\overline {T_xM} \oplus \mathcal D_x \]
 where $\mathcal D_x$ is any $\mathcal J^\epsilon$-invariant
pure supplement (the existence of such supplement is proved in \cite{MV}).

 Recall that if $M$
is a non degenerate submanifold of a pseudo-Riemannian manifold $(\widetilde M,
\widetilde g)$ and $T_x \widetilde{M} = T_xM \oplus T_x^\bot M$ is
the orthogonal decomposition of the tangent space $T_x \widetilde{M}$
at point $x\in M$ then the Levi-Civita covariant derivative
$\widetilde \nabla_X$ of the metric $\widetilde g$  in the
direction of a vector $X \in T_xM$ can be written as:
\[
\widetilde\nabla_X \equiv
\left( \begin{array}{cc}  \nabla_X & -A_X \\
A^t_X & \nabla^\bot_X \\
  \end{array} \right).
\]
that is
\begin{equation} \label{shape operator}
\widetilde\nabla_XY = \nabla_XY +h(X,Y), \qquad  \qquad
\widetilde\nabla_X\xi = -A^\xi X +\nabla^\bot_X\xi
\end{equation}
for any tangent (resp. normal) vector field $Y$ (resp. $\xi$)
on $M$. Here $\nabla_X$ is the covariant derivative of the induced
metric $g$ on $M$, $\nabla^\bot_X$ is the normal covariant
derivative in the normal bundle $T^\bot M$ which preserves the \it
normal metric \\\rm $g^\bot =\widetilde g|_{T^\bot M}$, $A^t_XY = h(X,Y)
\in T^\bot M$ where $h$ is the \textbf {second fundamental form}  and
$A_X\xi = A^\xi X$, where $A^\xi \in \text{\rm End } TM$ is the
\textbf {shape operator}  associated with a normal vector $\xi$.
\par

\begin{teor} \label{almost Hermitian is Hermitian if a certain 1-form vanishes}
Let $(M^{2m}, \mathcal{J}^{\epsilon},g)$, $m>1$, be an \aeH
 sub\-ma\-ni\-fold
of the para-quaternionic K\"ahler manifold  $(\widetilde
M^{4n},Q,\widetilde g)$. Then
\begin{enumerate}
\item  the almost $\epsilon$-complex structure  $\mathcal{J^\epsilon}$ is integrable if and only if the
 local 1-form
$\psi=\omega_3 \circ \mathcal{J^\epsilon} - \omega_2$ on $M^{2m}$
associated with an adapted basis $H=(J_\alpha)$ vanishes.
\item  $\mathcal{J^\epsilon}$ is integrable if one of the
following conditions holds:
\par
a) $\text{dim}(\mathcal D_x) > 2$ on an open dense set $U \subset
M$;
\par
b) $(M,J^\epsilon)$ is analytic and $\operatorname{dim}(\mathcal
D_x)
> 2$ at some point $x \in M$;
\end{enumerate}
\end{teor}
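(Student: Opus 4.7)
The plan is to reduce the entire statement to a single algebraic identity obtained by writing out $N_{\mathcal J^\epsilon}$ in terms of ambient Levi-Civita data. First I would compute $N_{\mathcal J^\epsilon}$ via the identity
\[
N_J(X,Y) = (\widetilde\nabla_{JX} J) Y - (\widetilde\nabla_{JY} J) X + J(\widetilde\nabla_Y J) X - J(\widetilde\nabla_X J) Y,
\]
valid for any torsion-free connection and both signs of $\epsilon$. Substituting $\widetilde\nabla_X J_1 = -\omega_3(X) J_2 - \epsilon\,\omega_2(X) J_3$ from (\ref{nabla J}) and using $J_1 J_2 = J_3 = -J_2 J_1$, $J_1 J_3 = \epsilon J_2$, a routine simplification yields
\[
N_{\mathcal J^\epsilon}(X,Y) = \psi(Y) J_2 X - \psi(X) J_2 Y + \epsilon\,\psi(\mathcal J^\epsilon X) J_3 Y - \epsilon\,\psi(\mathcal J^\epsilon Y) J_3 X
\]
as an identity in $T\widetilde M$, valid for all $X, Y \in TM$. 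Writing $\Psi(X) := \psi(X)\operatorname{Id} - \epsilon\,\psi(\mathcal J^\epsilon X) J_1$, this reads $\Psi(Y) J_2 X = \Psi(X) J_2 Y$. Since $N_{\mathcal J^\epsilon}$ is intrinsic and $J_1$ preserves both $TM$ and $T^\perp M$, the normal component of the right-hand side must vanish unconditionally, giving the always-valid identity $\Psi(Y)\,P^\perp(J_2 X) = \Psi(X)\,P^\perp(J_2 Y)$ for all $X, Y \in T_xM$, where $P^\perp$ is the orthogonal projection onto $T_x^\perp M$.

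For part (1), the direction $(\Leftarrow)$ is immediate. For $(\Rightarrow)$, integrability gives the full $T\widetilde M$-equation $\Psi(Y) J_2 X = \Psi(X) J_2 Y$. If $\Psi(X_0) \neq 0$ for some $X_0$, then in the complex case the algebra $\R\langle J_1\rangle \cong \C$ is a field, $\Psi(X_0)$ is invertible, and $J_2 Y = \Psi(X_0)^{-1}\Psi(Y) J_2 X_0 \in \operatorname{span}_\R(J_2 X_0, J_3 X_0)$; applying $J_2^{-1}=J_2$ gives $Y \in \operatorname{span}_\R(X_0, J_1 X_0)$ for every $Y \in T_xM$, forcing $\dim T_xM \leq 2$ and contradicting $m>1$. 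In the para-complex case I would decompose $T_xM = T^+_xM \oplus T^-_xM$ into $\pm 1$ eigenspaces of $J_1$; on each summand $\Psi$ reduces to a scalar multiple of $\operatorname{Id} \mp J_1$, and applying the identity to independent pairs within each eigenspace (using that $J_2$ swaps $T^+_xM \leftrightarrow T^-_xM$ injectively) yields $\psi|_{T^\pm_xM} = 0$, hence $\psi = 0$.

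For part (2)(a), only the always-valid normal identity is used. Since $\mathcal D_x \cap J_2 TM \subset \mathcal D_x \cap \overline{T_xM} = 0$, the map $P^\perp \circ J_2$ is injective on $\mathcal D_x$, so its image has dimension $\dim \mathcal D_x > 2$ and strictly contains the $J_1$-invariant plane $\operatorname{span}_\R(P^\perp J_2 X_0, J_1 P^\perp J_2 X_0)$ attached to any $X_0 \in \mathcal D_x\setminus\{0\}$. Choosing $Y \in \mathcal D_x$ with $P^\perp(J_2 Y)$ outside this plane, the normal identity reduces---within $\R\langle J_1\rangle$---to $\Psi(X_0) = 0$, so $\psi|_{\mathcal D_x} = 0$. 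Taking next $X \in \overline{T_xM}$ (so $P^\perp(J_2 X) = 0$) against a $Y \in \mathcal D_x$ with $P^\perp(J_2 Y) \neq 0$ forces $\Psi(X) = 0$, so $\psi|_{\overline{T_xM}} = 0$. Thus $\psi_x = 0$ on the dense open set $U$ and, by continuity, on all of $M$; part (1) yields integrability. For part (2)(b) I would note that $x \mapsto \dim \overline{T_xM}$ is upper semi-continuous (any coincidence forced in the limit only enlarges $T_xM \cap J_2 T_xM$), so $\{x : \dim \mathcal D_x > 2\}$ is open. The hypothesis propagates to an open neighborhood $V$ of $x$ where (a) gives $\psi = 0$; real-analyticity of $M$ and $J^\epsilon$ makes $\psi$ analytic, and the identity theorem on connected $M$ forces $\psi \equiv 0$.

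The main obstacle is the para-complex case of part (1): the algebra $\R\langle J_1\rangle \cong \R\oplus\R$ is not a field and contains the zero divisors $\tfrac12(\operatorname{Id}\pm J_1)$, so the clean invertibility argument of the complex case fails and must be replaced by an explicit eigenspace analysis, with a case split according to the position of $X, Y$ relative to $T^+_xM$ and $T^-_xM$.
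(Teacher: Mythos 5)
Your computation of $N_{\mathcal J^\epsilon}$, the equivalence in part (1), and the overall architecture of part (2) all track the paper's own proof closely: the paper derives the same identity $\tfrac12 N_{\mathcal J^\epsilon}(X,Y)=-\psi(X)J_2Y+\epsilon\psi(\mathcal J^\epsilon X)J_3Y+\psi(Y)J_2X-\epsilon\psi(\mathcal J^\epsilon Y)J_3X$, and its key unconditional fact --- that $N_{\mathcal J^\epsilon}(X,Y)\in TM\cap J_2TM=\overline{TM}$ --- is exactly your ``normal component vanishes'' identity, phrased via the splitting $T_xM=\overline{T_xM}\oplus\mathcal D_x$ instead of $T_x\widetilde M=T_xM\oplus T_x^\perp M$. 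Where you genuinely diverge is in part (2): the paper introduces the metric dual $a=g^{-1}\psi$ and proves a lemma that $\mathcal D_x=\operatorname{span}(a'_x,\mathcal J^\epsilon a'_x)$ wherever $N\neq 0$; you work directly with $P^\perp\circ J_2$ and never dualize, which is arguably cleaner in this pseudo-Riemannian setting (it sidesteps the case $\|a\|^2=0$ entirely). Your parts (1) and (2b) are correct, and your explicit semicontinuity remark is a useful addition the paper leaves implicit.

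There is, however, a genuine gap in part (2a) for $\epsilon=+1$, and it is precisely the zero-divisor phenomenon you yourself flagged for part (1) and then did not address here. From $\Psi(Y)\,P^\perp J_2X_0=\Psi(X_0)\,P^\perp J_2Y$ with $u_Y:=P^\perp J_2Y\notin\Pi_0$ you conclude $\Psi(X_0)=0$ ``within $\R\langle J_1\rangle$''; but if $\Psi(X_0)=c(\operatorname{Id}\mp J_1)$ with $c\neq0$, then $\Psi(X_0)u_Y=2c\,u_Y^{\mp}$ kills one $J_1$-eigencomponent of $u_Y$, and the surviving component can lie in $\Pi_0$ even though $u_Y$ does not --- so no contradiction arises and $c$ is not forced to vanish. (The same issue, in milder form, affects your deduction of $\Psi(X)=0$ for $X\in\overline{T_xM}$.) The repair is available with the tools you already used in part (1): since $J_2$ anticommutes with $J_1$ it interchanges the $J_1$-eigenspaces of the $Q$-invariant subspace $\overline{T_xM}$, so $\dim\overline{T_xM}^+=\dim\overline{T_xM}^-$ and hence $\dim\mathcal D_x^{+}=\dim\mathcal D_x^{-}=\tfrac12\dim\mathcal D_x\geq 2$; then, observing that for $X,Y\in\mathcal D_x$ the vector $\psi(Y)X+\epsilon\psi(\mathcal J^\epsilon Y)\mathcal J^\epsilon X-\psi(X)Y-\epsilon\psi(\mathcal J^\epsilon X)\mathcal J^\epsilon Y$ lies in $\overline{T_xM}\cap\mathcal D_x=0$, you can rerun your eigenspace argument on $\mathcal D_x^{\pm}$ to get $\psi|_{\mathcal D_x}=0$, and then pair $X\in\overline{T_xM}$ against a non-eigenvector $Y\in\mathcal D_x$ to get $\psi|_{\overline{T_xM}}=0$. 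With that insertion the proof is complete.
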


\begin{proof} (1)
Let proceed as in \cite{AM}, Theorem 1.1. Remark that if $(M,\mathcal{J^\epsilon})$ is an almost $\epsilon$-complex
submanifold of an almost $\epsilon$-complex manifold
$(\widetilde{M},J^\epsilon)$ then the restriction of the Nijenhuis
tensor $N_{J^\epsilon}$  to the submanifold $M$ coincides with the
Nijenhuis tensor $N_\mathcal{J^\epsilon}$ of the almost complex
structure $\mathcal{J^\epsilon}={J^\epsilon}_{|TM}$. Then
 for any $X,Y \in TM $, we can write
\[
\begin{array}{llll}
\frac{1}{2}N_\mathcal{J^\epsilon} (X,Y) &=&
[\mathcal{J^\epsilon} X,\mathcal{J^\epsilon} Y]
- \mathcal{J^\epsilon} [\mathcal{J^\epsilon} X,Y] - \mathcal{J^\epsilon} [X,\mathcal{J^\epsilon} Y] + \epsilon [X,Y]= \\

\frac{1}{2}N_{J^\epsilon}(X,Y) & = &
[\widetilde\nabla_{J^\epsilon X}(J^\epsilon Y)-
\widetilde\nabla_{J^\epsilon Y}(J^\epsilon X)] -J^\epsilon
[\widetilde\nabla_{J^\epsilon X}Y
-\widetilde\nabla_Y (J^\epsilon X)]\\

 & & -J^\epsilon [ \widetilde\nabla_X (J^\epsilon Y)
- \widetilde\nabla_{J^\epsilon Y}X ] + \epsilon [\nabla_X Y - \nabla_Y X]\\



 & = & (\widetilde\nabla_{J^\epsilon X}J^\epsilon)Y
-(\widetilde\nabla_{J^\epsilon Y}J^\epsilon)X + J^\epsilon
(\widetilde\nabla_Y J^\epsilon)X -J^\epsilon(\widetilde\nabla_X
J^\epsilon)Y
\end{array}
\]
and hence, from (\ref{nabla J})
\[
\begin{array}{ll}
\frac{1}{2}N_\mathcal{J^\epsilon}(X,Y)= &
-[\omega_3(\mathcal{J^\epsilon} X)- \omega_2(X)]J_2 Y + [-\epsilon
\omega_2(\mathcal{J^\epsilon} X)+
\omega_3(X)]J_3Y \\
& + [\omega_3(\mathcal{J^\epsilon} Y)- \omega_2(Y)]J_2 X  -
[-\epsilon \omega_2(\mathcal{J^\epsilon} Y)+\omega_3(Y)]J_3 X
\end{array}
\]
where $(J_1,J_2,J_3)$ is an  adapted local basis.
This
implies (1) in one direction.




Viceversa, let $N_\mathcal{J^\epsilon}(X,Y)=0, \; \forall X,Y \in T_xM$.
By applying $J_2$ to both members of  the above equality, this is equivalent to the identity
\begin{equation} \label{Nijenhuis equal zero}
\psi(X)Y + \epsilon \psi(\mathcal{J^\epsilon} X)\mathcal{J^\epsilon}Y= \psi(Y)X+ \epsilon \psi(\mathcal{J^\epsilon} Y)\mathcal{J^\epsilon}X, \quad \forall X,Y \in T_xM.
\end{equation}
Let assume that there exists a non zero vector $X \in T_xM$ such that $\psi(X) \neq 0$. We show that this leads to a contradiction.
Let consider a vector $0\neq Y \in T_xM$ which is not en eigenvector of $\mathcal{J^\epsilon}$ and such that  $span(X,\mathcal{J^\epsilon}X)  \cap  span(Y,\mathcal{J^\epsilon}Y)= {0}$.
It is easy to check that such a vector $Y$ always exists. Then the vectors in both sides  of  (\ref{Nijenhuis equal zero}) must be  zero which implies in particular that $\psi(X)=0$. Contradiction.

\par

(2) We assume that $\mathcal{J^\epsilon}$ is not integrable. Then
the 1-form $\psi = (\omega_3\circ \mathcal{J^\epsilon} - \omega_2)|_{TM}$
 is not identically zero, by (1). Denote by $a=g^{-1}\psi$ the
local vector field on $M$ associated with the 1-form $\psi$ and
let $a=\overline a + a'$ with  $\overline a \in \overline {TM}$ and
$a' \in \mathcal D$. Now we need the following
\begin{lemma} \label{dimension 0,2 of D where N non si annulla}
Let $(M^{2m},\mathcal{J^\epsilon},g), m>1$, be an \aeH
submanifold of a \pqK manifold  $(\widetilde M^{4n},Q,\widetilde g)$. Then in any point $x \in M^{2m}$ where the
Nijenhuis tensor $N(\mathcal{J^\epsilon})_x\neq 0$, or
equivalently the vector $a_x \neq 0$, any $\mathcal{J}^\epsilon$-invariant supplementary  subspace $\mathcal D_x$
is  spanned by $a'_x$ and $\mathcal{J^\epsilon}a_x'$:
$$\mathcal D_x = \text{\rm span}\{a_x',\mathcal{J^\epsilon}a_x'\}.$$
Moreover if $T_xM$ is not para-quaternionic  (i.e. $\dim \mathcal D_x \neq
0$) then $\psi(\overline{T_x M}) \equiv 0$.
\end{lemma}

\begin{proof}
Remark that
\begin{equation} \label{Nijenhuis}
\begin{array}{ll}
\frac{1}{2}N_\mathcal{J^\epsilon}(X,Y)&= -\psi(X)J_2Y + \epsilon
\psi(\mathcal{J^\epsilon}X)J_3 Y + \psi(Y) J_2X
- \epsilon \psi(\mathcal{J^\epsilon}Y) J_3X\\
& = -J_2 \{ \psi(X)Y + \epsilon
\psi(\mathcal{J^\epsilon}X)\mathcal{J^\epsilon} Y - \psi(Y) X -
\epsilon \psi(\mathcal{J^\epsilon}Y) \mathcal{J^\epsilon}X \},
\end{array}
\end{equation}
that is $N_\mathcal{J^\epsilon}(X,Y)  \in J_2TM \cap TM=
\overline {TM}$ for any  $X,Y \in TM$. Hence
\begin{equation} \label{Nijenhuis vector belongs to para-quaternionic component of tangent space}
\bigg[ \psi(X)Y + \epsilon
\psi(\mathcal{J^\epsilon}X)\mathcal{J^\epsilon} Y - \psi(Y) X -
\epsilon \psi(\mathcal{J^\epsilon}Y) \mathcal{J^\epsilon}X \bigg]
\in \overline{TM} \quad \forall X,Y \in TM.
\end{equation}
Taking  $X \in \overline {T_x M}$ and $ 0 \neq Y \in \mathcal D_x$ the first two terms of
(\ref{Nijenhuis vector belongs to para-quaternionic component of tangent space}) are in $\mathcal{D}_x$  and the last two in $\overline{T_xM}$. We  conclude
 that $\psi(\overline {T_x M}) \equiv 0$ if $\dim \mathcal D_x
\neq 0$. For $X=a=g^{-1}\psi$, since $g(a,J^\epsilon a)=0$, the last
condition says that
$$
b_Y:=|a|^2Y -\psi(Y)a - \epsilon
\psi(\mathcal{J^\epsilon}Y)\mathcal{J^\epsilon}a \in \overline{TM}
\qquad \qquad \forall \, ~Y\in TM
$$
Considering the $\mathcal D$-component of the vector $b_Y$  for $Y=\overline Y
\in \overline{TM}$ and $Y=Y' \in \mathcal D$ respectively, we get
the equations:
\begin{equation} \label{generatori di D prima}
- \psi(\overline Y)a' - \epsilon \psi(\mathcal{J^\epsilon}
\overline Y)\mathcal{J^\epsilon} a'=0 \, , \qquad \qquad \forall
\,~ \overline Y\in \overline{TM}
\end{equation}
\begin{equation}\label{generatori di D seconda}
|a|^2Y' -\psi(Y')a' - \epsilon \psi(\mathcal{J^\epsilon}
Y')\mathcal{J^\epsilon} a'=0 \qquad \qquad \forall \, ~Y' \in
\mathcal D.
\end{equation}
The last equation shows that $\mathcal D_x =
\{a',\mathcal{J^\epsilon} a'\}$ when $a\neq 0$ (whereas
(\ref{generatori di D prima}) confirms that $\psi(\overline {T_x
M}) \equiv 0$ when $\dim \mathcal D \neq 0$). Observe that $a'$ is never an eigenvector of the para-complex structure $\mathcal{J}$.

\end{proof}
\textit{Continuing the proof of Theorem (\ref{almost Hermitian is Hermitian if a certain 1-form vanishes})}:
The Lemma implies statements (2a)  and (2b) since in the analytic
case the set $U$ of points where the analytic vector field $a\neq
0$ is open (complementary of the close set where  $a=0$) and dense
(since otherwise it would exist  an open  set $\widetilde U$ with $a(\widetilde
U)=0$ which, by the analiticy of $a$ it would imply $a=0$
everywhere) and $\operatorname{dim}\mathcal D_x \le 2$ on $U$.
\end{proof}

From (\ref{Nijenhuis}) it follows the
\begin{coro} \label{in pure (para)complex tangent space Nijenhuis tensor is zero} In case $T_xM$ is pure $\epsilon$-complex i.e. $\overline{T_xM}=\bf{0}$
 in an open dense set in $M$ than the
almost Hermitian submanifold is Hermitian.
\end{coro}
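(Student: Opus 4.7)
The corollary is essentially an immediate consequence of a containment already established inside the proof of Lemma~\ref{dimension 0,2 of D where N non si annulla}, so my plan is simply to isolate that observation and add a short continuity argument.

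The key point is the second line of equation~(\ref{Nijenhuis}): the right-hand side has the form $J_2(\,\cdot\,)$ where the argument lies in $TM$, so the Nijenhuis tensor satisfies
\[
N_{\mathcal{J}^{\epsilon}}(X,Y)\in J_2 TM\cap TM=\overline{TM}\qquad \forall\, X,Y\in TM.
\]
This inclusion holds pointwise, so at every point $x$ with $\overline{T_xM}=0$ one automatically gets $N_{\mathcal{J}^{\epsilon}}|_x=0$.

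Thus my plan is: first, invoke the containment above (already derived from (\ref{Nijenhuis})) to conclude that $N_{\mathcal{J}^{\epsilon}}$ vanishes on the open dense set $U\subset M$ on which $\overline{T_xM}=\mathbf{0}$. Second, observe that $N_{\mathcal{J}^{\epsilon}}$ is a smooth (in fact $C^\infty$) tensor field on $M$, since $\mathcal{J}^{\epsilon}$ is smooth and the Nijenhuis bracket is a differential operator of order one. A continuous tensor that vanishes on a dense subset vanishes identically, hence $N_{\mathcal{J}^{\epsilon}}\equiv 0$ on $M$. Third, apply part~(1) of Theorem~\ref{almost Hermitian is Hermitian if a certain 1-form vanishes} (or just the definition of an $\epsilon$-Hermitian structure) to conclude that $\mathcal{J}^{\epsilon}$ is integrable, so $(M,\mathcal{J}^{\epsilon},g)$ is $\epsilon$-Hermitian.

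There is no real obstacle here; the corollary is essentially a restatement of what equation~(\ref{Nijenhuis}) already gives, plus the trivial extension from an open dense set to all of $M$ by continuity of tensor fields. One could alternatively derive it from case~(2a) of Theorem~\ref{almost Hermitian is Hermitian if a certain 1-form vanishes}: the hypothesis $\overline{T_xM}=\mathbf{0}$ on an open dense $U$ forces $\mathcal{D}_x=T_xM$ of dimension $2m>2$ there (using $m>1$, which is the standing assumption of the theorem), so condition (2a) applies directly. I would mention this alternative route in one line at the end.
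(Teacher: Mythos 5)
Your proposal is correct and takes essentially the same route as the paper: the paper's entire proof is the one-line remark that the corollary follows from equation (\ref{Nijenhuis}), i.e.\ from the containment $N_{\mathcal{J}^{\epsilon}}(X,Y)\in J_2TM\cap TM=\overline{TM}$, which forces $N_{\mathcal{J}^{\epsilon}}$ to vanish wherever $\overline{T_xM}=\mathbf{0}$ and hence, by density and continuity of the tensor, everywhere. You have merely made explicit the density step the paper leaves implicit (and your alternative route via condition (2a) is fine for $m>1$, though the direct argument is preferable since the corollary is also meant to cover the $2$-dimensional case).
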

This is a generalization of  the 2-dimensional case where clearly, by the non degeneracy hypotheses, $T_xM$ is
pure for any $x \in M$.

\begin{defi}
A submanifold $M$ of an almost para-quaternionic manifold $(\widetilde M,Q)$  is an \textbf{almost para-quaternionic submanifold} if its tangent bundle
is $Q$-invariant. Then $(M, Q|_{TM})$ is an almost  para-quaternionic manifold.
\end{defi}

The following proposition is the extension to the para-quaternionic case of a basic result in
quaternionic case.
\begin{prop} \label{para-quaternionic submanifold in para-quaternionic
Kaehler manifold is Kaehler and totally geodesic} A non degenerate almost
para-quaternionic submanifold $M^{4m}$ of a para-quaternionic K\"ahler
ma\-ni\-fold $(\widetilde{M}^{4n},Q,\widetilde g)$ is a totally geodesic  para-quaternionic K\"ahler
submanifold.
\end{prop}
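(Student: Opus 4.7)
The plan is to mimic the classical argument in the quaternionic setting, with only minor sign bookkeeping for $\epsilon_\alpha=\pm 1$, and reduce totally geodesic-ness to a single quick computation using two anticommuting sections of $Q$.

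First I would observe that, since every local section $J_\alpha$ of $Q$ is $\widetilde g$-skew-symmetric and $TM$ is $Q$-invariant, the orthogonal complement $T^\perp M$ is also $Q$-invariant: for $\xi\in T^\perp M$ and $X\in TM$ one has $\widetilde g(J_\alpha\xi,X)=-\widetilde g(\xi,J_\alpha X)=0$. This will allow me to take tangent and normal components of expressions involving $J_\alpha$ cleanly.

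Next I would use (\ref{nabla J}) together with the Gauss formula (\ref{shape operator}) to compute $\widetilde\nabla_X(J_\alpha Y)$ in two ways for $X,Y\in TM$:
\[
(\widetilde\nabla_X J_\alpha)Y + J_\alpha\nabla_X Y + J_\alpha h(X,Y) \;=\; \nabla_X(J_\alpha Y) + h(X,J_\alpha Y).
\]
By (\ref{nabla J}) the term $(\widetilde\nabla_X J_\alpha)Y$ is a combination of $J_\beta Y,J_\gamma Y\in TM$, so taking the normal part yields
\[
h(X,J_\alpha Y)=J_\alpha h(X,Y),
\]
and by symmetry of $h$ also $h(J_\alpha X,Y)=J_\alpha h(X,Y)$, for every local section $J_\alpha$ of $Q$.

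The decisive step is then to exploit two anticommuting elements of a standard basis $(J_1,J_2,J_3=J_1J_2=-J_2J_1)$. Using the above identity twice,
\[
h(J_1 X,J_2 Y)=J_1 h(X,J_2 Y)=J_1 J_2\, h(X,Y)=J_3\, h(X,Y),
\]
whereas the symmetry $h(J_1 X,J_2 Y)=h(J_2 Y,J_1 X)$ together with the same identity gives
\[
h(J_1 X,J_2 Y)=J_2 J_1\, h(X,Y)=-J_3\, h(X,Y).
\]
Comparing, $2J_3 h(X,Y)=0$, and since $J_3^{2}=\epsilon_3\, Id$ the endomorphism $J_3$ is invertible, so $h\equiv 0$ and $M$ is totally geodesic.

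Finally, from $h=0$ the Gauss formula reduces to $\widetilde\nabla_X Y=\nabla_X Y$ on $TM$, so $\nabla$ is the restriction of $\widetilde\nabla$ to tangent directions. Applying this to $J_\alpha Y$ as above shows that $\nabla_X(J_\alpha|_{TM})$ is again given by (\ref{nabla J}) with the same 1-forms $\omega_\beta,\omega_\gamma$, hence $\nabla$ preserves $Q|_{TM}$. Thus $(M,Q|_{TM},g)$ is a para-quaternionic K\"ahler submanifold. I expect the only subtle point to be the sign tracking for the various $\epsilon_\alpha$ in the anticommutation relation $J_1J_2=-J_2J_1$ (which holds uniformly, so the key step above is unaffected); everything else is a routine adaptation of the quaternionic argument.
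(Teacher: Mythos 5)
Your proposal is correct and follows essentially the same route as the paper: you establish that the second fundamental form intertwines with every local section $J_\alpha$ of $Q$ (the paper phrases this dually as $A^\xi J_\alpha=-J_\alpha A^\xi$ for the shape operator, obtained by the analogous metric computation), and then you use the para-quaternionic multiplication relations together with the symmetry of $h$ to force $h\equiv 0$, exactly as the paper does by writing $AJ_\alpha=-\epsilon_3\epsilon_\alpha AJ_\beta J_\gamma=-AJ_\alpha$. The concluding observation that $h=0$ makes $\nabla$ the restriction of $\widetilde\nabla$, hence $Q|_{TM}$-preserving, likewise matches the paper's final remark.
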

\begin{proof}
Let $A$ be the shape operator of the para-quaternionic submanifold.
Then, for any $X,Y \in \Gamma(TM), \; \xi \in \Gamma(T^\perp M)$,
\[
\begin{array}{ll}
& \widetilde{g}(A^\xi (J_\alpha X),Y) =-\widetilde{g}(\widetilde\nabla_{J_\alpha X} \xi
,Y)=-\widetilde{g}(\widetilde\nabla_Y \xi,J_\alpha X)\\
&=\widetilde{g}(\xi,\widetilde\nabla_Y(J_\alpha X))= \widetilde{g}(\xi,(\widetilde\nabla_Y \widetilde
J_\alpha) X + \widetilde J_\alpha \widetilde\nabla_Y X).
\end{array}
\]
Moreover
\[
\begin{array}{ll}
\widetilde{g}(\xi, \widetilde J_\alpha \widetilde\nabla_Y X) & = -\widetilde{g}(\widetilde J_\alpha
\xi,
\widetilde\nabla_YX)=-\widetilde{g}(\widetilde J_\alpha \xi, \widetilde\nabla_XY -[X,Y])\\
&=-\widetilde{g}(\widetilde J_\alpha \xi, \widetilde\nabla_X Y)=\widetilde{g}(\xi, \widetilde J_\alpha
\widetilde\nabla_X
Y)=\widetilde{g}(\xi,\widetilde\nabla_X(\widetilde J_\alpha Y)-(\widetilde\nabla_X \widetilde J_\alpha) Y)\\
&= \widetilde{g}(\xi,\widetilde\nabla_X(J_\alpha Y))= -\widetilde{g}(\widetilde\nabla_X
\xi,J_\alpha Y)=  -\widetilde{g}(J_\alpha A^\xi X,Y)
\end{array}
\]
and
\[
\begin{array}{ll}
 \widetilde{g}(\xi,(\widetilde\nabla_Y \widetilde J_\alpha) X)=\widetilde{g}(\xi,- \epsilon_\beta \omega_\gamma (Y)J_\beta X + \epsilon_\gamma \omega_\beta(Y)
 J_\gamma)=0
\end{array}
\]
since $J_\beta X,J_\gamma X \in \Gamma(TM)$. It follows that
$AJ_\alpha= -J_\alpha A, \quad \alpha=1,2,3$. Computing
$AJ_\alpha=-J_\alpha A=- \epsilon_3 \epsilon_\alpha J_\beta
J_\gamma A= -\epsilon_3 \epsilon_\alpha A J_\beta J_\gamma
=-(\epsilon_3 \epsilon_\alpha)^2 A J_\alpha= -AJ_\alpha$ we get $A= 0$ i.e. $h=0$.
Now it is immediate to deduce that $(M^{4m}, Q|_{TM},g)$ is also para-quaternionic K\"ahler.
\end{proof}

\begin{coro} Let $(M^{4k},\mathcal{J^\epsilon},g)$ be an
\aeH submanifold of di\-men\-sion $4k$ of a  \pqK manifold $\widetilde
M^{4n}$. Assume that the set $U$ of points $x\in M$ where the
Nijenhuis tensor of $\mathcal{J^\epsilon}$ is not zero is open and
dense in $M$ and that, $\forall x \in U, \; \overline{T_xM}$ is non
degenerate. Then $M$ is a totally geodesic \pqK submanifold.
\end{coro}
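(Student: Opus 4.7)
The plan is to show that the hypotheses force $T_xM$ to be $Q_x$-invariant at every point of $U$, and then to invoke Proposition \ref{para-quaternionic submanifold in para-quaternionic Kaehler manifold is Kaehler and totally geodesic}, which handles the almost para-quaternionic case.

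First I would fix a point $x \in U$. Since the Nijenhuis tensor of $\mathcal{J}^\epsilon$ is non-zero at $x$, Lemma \ref{dimension 0,2 of D where N non si annulla} applies: for any $\mathcal{J}^\epsilon$-invariant supplement $\mathcal{D}_x$ of $\overline{T_xM}$ in $T_xM$ one has $\mathcal{D}_x = \mathrm{span}\{a'_x,\, \mathcal{J}^\epsilon a'_x\}$, and in particular $\dim \mathcal{D}_x \le 2$.

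Next I would use the non-degeneracy hypothesis to promote $\overline{T_xM}$ to a para-quaternionic Hermitian subspace of $T_x\widetilde{M}$: it is $Q_x$-invariant by construction, and by hypothesis $g|_{\overline{T_xM}}$ is non-degenerate, so $(\overline{T_xM}, Q_x|_{\overline{T_xM}}, g|_{\overline{T_xM}})$ is a para-quaternionic Hermitian vector space and hence has dimension divisible by $4$. Combining this with $\dim T_xM = 4k$ in the decomposition $T_xM = \overline{T_xM} \oplus \mathcal{D}_x$ yields $\dim \mathcal{D}_x \equiv 0 \pmod 4$; together with the bound $\dim \mathcal{D}_x \le 2$ this forces $\dim \mathcal{D}_x = 0$. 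Thus $T_xM = \overline{T_xM}$ is $Q_x$-invariant at every $x \in U$.

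Finally, the condition that $T_xM$ be $Q_x$-invariant is a closed condition on $x$ (it is the vanishing of the tensor field on $M$ obtained by projecting $Q \cdot TM$ onto the normal bundle $T^\bot M$). Since it holds on the open dense set $U$, it holds on all of $M$, so $M$ is a non-degenerate almost para-quaternionic submanifold of $\widetilde M^{4n}$. Proposition \ref{para-quaternionic submanifold in para-quaternionic Kaehler manifold is Kaehler and totally geodesic} then delivers the conclusion that $M$ is a totally geodesic \pqK submanifold.

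The only real obstacle is the mod-$4$ parity argument; it is precisely here that both the hypothesis $\dim M = 4k$ (not merely even) and the non-degeneracy of $\overline{T_xM}$ on $U$ are essential. Without either, $\dim \overline{T_xM}$ could merely be even and the case $\dim \mathcal{D}_x = 2$ could not be excluded, so the integrability and the almost-para-quaternionic conclusions would both fail.
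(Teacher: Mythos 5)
Your proof is correct and takes essentially the same route as the paper's: the paper's own argument is just the one-line remark that, as in the quaternionic case, the non-degeneracy of $\overline{T_xM}$ forces $\dim\mathcal D_x=0$, which is exactly the mod-$4$ dimension count you spell out (via Lemma \ref{dimension 0,2 of D where N non si annulla}) before passing to density and invoking Proposition \ref{para-quaternionic submanifold in para-quaternionic Kaehler manifold is Kaehler and totally geodesic}. No gaps.
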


\begin{proof} As in  \cite{AM} by taking into account that, by the non
degeneracy hypotheses of $\overline{T_xM}$, it is necessarily $\operatorname{dim}\mathcal D_x=0$.
\end{proof}


\section{Almost $\epsilon$-K\"ahler, $\epsilon$-K\"ahler and totally $\epsilon$-complex
submanifolds}
\begin{defi} The \aeH
submanifold $(M^{2m},\mathcal{J^\epsilon},g)$ of a \pqK manifold
$(\widetilde M^{4n},Q,\widetilde g)$ is called \textbf{
almost $\epsilon$-K\"ahler} (resp., \textbf{ $\epsilon$-K\"ahler})
if the K\"ahler form $F= F_1 |_{TM}=g\circ \mathcal{J^\epsilon}$
is closed (resp. parallel).
Moreover $M$ is called \textbf{totally $\epsilon$-complex} if
$$
J_2T_xM \perp T_xM \qquad \qquad \forall \, x \in M
$$
where $(J_1,J_2,J_3)$ is an adapted basis (note that $J_2T_xM
\perp T_xM \Leftrightarrow J_3T_xM \perp T_xM$).
\end{defi}
For a study of (almost)-K\"ahler and totally complex submanifolds of a qua\-ter\-nio\-nic manifold see
 \cite{AM},\cite{7},\cite{11},\cite{19}.

 In case $\widetilde M$ is the $n$-dimensional para-quaternionic numerical space $\widetilde{\HH}^n$, the pro\-to\-type of flat para-quaternionic K\"ahler spaces (see \cite{ MV1}),
typical examples of such submanifolds are the flat K\"ahler (resp. para-K\"ahler) submanifolds $M^{2k}= \C^k$ (resp. $\widetilde{\C}^k$) obtained by choosing the first $k$ para-quaternionic coordinates  as com\-plex (resp. para-complex) numbers and the remaining $n-k$ equals to zero. In case $\widetilde M^{4n}=\widetilde{\HH}P^n$ is the para-quaternionic projective space endowed with the standard para-quaternionic K\"ahler metric (see \cite{DJS}), examples of  non flat  K\"ahler (resp. para-K\"ahler) submanifolds are given by the immersions of the projective complex (resp. para-complex) spaces $\C P^{k-1}$ (resp. $\widetilde \C P^{k-1}$)  induced by the  immersions considered above in the flat case.

From (\ref{nabla J}) one has
\begin{equation}\label{contition to be epsilon-Kaehel}
\begin{array}{l}
(\nabla_X \mathcal{J^\epsilon})Y = \big[-\omega_3(X)Id
- \epsilon \omega_2(X) \mathcal{J^\epsilon}\big]\big[J_2Y\big]^T \qquad  \qquad X,Y \in TM.
\end{array}
\end{equation}
and then, by arguing as in \cite{AM}, the following theorem is deduced.
\begin{teor}\label{almost Hermitian to be Kaehler}
Let $(\widetilde M^{4n},Q,\widetilde
g)$ be a \pqK manifold.\\
1)  A totally $\epsilon$-complex submanifolds  of $\widetilde M$ is $\epsilon$-K\"ahler.\\
2) If $\nu\neq 0$, for an \aeH submanifold $(M^{2m},\mathcal{J^\epsilon},g), \, m >1,$ of
$\widetilde M$  the following conditions are equivalent:\\
\indent $ k_1)$ $M$ is $\epsilon$-K\"ahler,\\
\indent $ k_2)$ ${\omega_2}|_{T_xM}={\omega_3}|_{T_xM}=0
\qquad \qquad \forall \, x \in M$,\\
\indent $ k_3)$ $M$ is totally $\epsilon$-complex.
\end{teor}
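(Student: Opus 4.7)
The plan is to pivot on formula \eqref{contition to be epsilon-Kaehel}, combined with the structure equations \eqref{structure equations in pqKm} and the integrability conditions \eqref{exterior differential of fundamental 2-form}.

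For Part 1, if $M$ is totally $\epsilon$-complex then $(J_2 Y)^T = 0$ for every $Y \in TM$, so \eqref{contition to be epsilon-Kaehel} gives $\nabla \mathcal{J^\epsilon} = 0$; combined with $\nabla g = 0$ this yields $\nabla F = 0$, i.e. $M$ is $\epsilon$-K\"ahler.

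For Part 2, the implications $k_3 \Rightarrow k_1$ and $k_2 \Rightarrow k_1$ are transparent: the first is Part 1, and the second follows since \eqref{contition to be epsilon-Kaehel} makes $(\nabla_X \mathcal{J^\epsilon}) Y$ vanish identically once $\omega_2|_{TM} = \omega_3|_{TM} = 0$. For the equivalence $k_2 \Leftrightarrow k_3$ we use $\nu \neq 0$ and $m>1$ in both directions. For $k_2 \Rightarrow k_3$, pull back the structure equation $\nu F_2' = \epsilon_3(d\omega_2 - \epsilon_2 \omega_3 \wedge \omega_1)$ to $M$; since pullback commutes with $d$, the hypothesis forces $\nu F_2|_{TM} = 0$, so $F_2|_{TM} = 0$, i.e. $\widetilde g(J_2 X, Y) = 0$ for $X, Y \in TM$. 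For $k_3 \Rightarrow k_2$, use \eqref{exterior differential of fundamental 2-form} with $\alpha = 2$ and $\alpha = 3$ restricted to $M$; since $F_2|_M = F_3|_M = 0$ and $F_1'|_M = -\epsilon F$, these collapse to $\nu\, \omega_3|_M \wedge F = 0$ and $\nu\, \omega_2|_M \wedge F = 0$; as the K\"ahler form $F$ of $M^{2m}$ is non-degenerate and $m>1$, wedging with $F$ is injective on 1-forms, so $\omega_2|_{TM} = \omega_3|_{TM} = 0$.

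The main obstacle is $k_1 \Rightarrow k_3$. Under $k_1$, \eqref{contition to be epsilon-Kaehel} reduces to
\[
[\omega_3(X)\,\mathrm{Id} + \epsilon\, \omega_2(X)\, \mathcal{J^\epsilon}](J_2 Y)^T = 0 \qquad \forall\, X, Y \in T_xM.
\]
Let $B$ be the open set where $M$ fails to be totally $\epsilon$-complex; at each $x \in B$ some $v := (J_2 Y_0)^T$ is nonzero. In the complex case ($\epsilon = -1$), the real operator $\alpha\,\mathrm{Id} + \beta\, \mathcal{J^\epsilon}$ has spectrum $\alpha \pm i\beta$, hence is invertible unless $(\alpha, \beta) = (0, 0)$, forcing $\omega_2|_x = \omega_3|_x = 0$. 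In the para-complex case ($\epsilon = +1$), decompose $v = v^+ + v^-$ along the eigenspaces $T^\pm_M$ of $\mathcal{J^\epsilon}$; since $J_2$ interchanges $T^\pm_{\widetilde M}$, both components $v^\pm$ can in general be activated by suitable $Y_0^\pm \in T^\mp_M$, giving both $(\omega_3 + \omega_2)|_x = 0$ and $(\omega_3 - \omega_2)|_x = 0$. The degenerate one-sided configurations (e.g. $J_2 T^-_M \subset T^\perp M$ at $x$) would yield only the relation $\omega_3 = \pm \omega_2$ on $T_xM$; feeding these into the structure equations for $F_2|_M$ and $F_3|_M$ and invoking $\nu \neq 0$ (as in $k_2 \Rightarrow k_3$) excludes them. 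Thus $\omega_2$ and $\omega_3$ vanish as 1-forms on $B$, and the $k_2 \Rightarrow k_3$ argument applied on $B$ shows $M$ is totally $\epsilon$-complex there, contradicting the definition of $B$. Therefore $B = \emptyset$ and $k_3$ holds, closing the equivalence loop.
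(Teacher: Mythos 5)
Your Part 1 and the implications $k_2\Rightarrow k_1$, $k_2\Rightarrow k_3$, $k_3\Rightarrow k_2$ are correct, and in fact more detailed than the paper itself, which derives Part 1 from (\ref{contition to be epsilon-Kaehel}) and simply cites \cite{AC1}, Proposition~20, for Part 2; your use of the structure equations (\ref{structure equations in pqKm}) together with the injectivity of $\wedge F$ on $1$-forms for $m>1$ is a clean way to get $k_2\Leftrightarrow k_3$. The complex half ($\epsilon=-1$) of $k_1\Rightarrow k_3$ is also fine, since $\alpha\,\mathrm{Id}+\beta\,\mathcal{J}$ is invertible there unless $\alpha=\beta=0$, so on the open set $B$ you get $k_2$ and hence a contradiction.

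The gap is in $k_1\Rightarrow k_3$ for $\epsilon=+1$, exactly in the ``one-sided'' configuration that you name but do not actually exclude. Suppose on an open subset of $B$ one has $J_2T^{-}M\perp TM$ while $J_2T^{+}M\not\perp TM$; since the orthogonal projection onto $TM$ commutes with $J_1$ and $J_2$ exchanges the eigenbundles of $J_1$, the pointwise identity only yields $\omega_2|_{TM}=\omega_3|_{TM}=:\eta$ there. Feeding this into (\ref{structure equations in pqKm}) for $\alpha=2,3$ and restricting to $M$ gives $\nu F_2|_{TM}=d\eta-\eta\wedge\omega$ and $\nu F_3|_{TM}=-(d\eta-\eta\wedge\omega)$, i.e. $F_3|_{TM}=-F_2|_{TM}$. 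But this identity is \emph{automatically satisfied} in the configuration at hand: for $X\in T^{+}M$ one has $J_3X=J_1J_2X=-J_2X$, hence $F_3(X,\cdot)=-F_2(X,\cdot)$ identically; $F_2$ and $F_3$ vanish on $T^{+}M\times T^{-}M$ because the eigenbundles of $J_1$ in $T\widetilde M$ are totally isotropic; and on $T^{-}M\times T^{-}M$ the identity reduces to $F_2|_{T^{-}M\times T^{-}M}=0$, which is precisely a restatement of the hypothesis $J_2T^{-}M\perp TM$. So the structure equations reproduce the one-sided configuration rather than contradict it, and nothing in your argument forces $\eta=0$ or $F_2|_{TM}=0$ on that set. (A secondary point: the locus where exactly one of $J_2T^{\pm}M\perp TM$ holds is only locally closed, not open, so even a correct differential argument would need care before $d\eta$ can be invoked.) This para-complex sub-case is the actual content of the citation to \cite{AC1}, Proposition~20, and your proposal as written does not close it.
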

\begin{proof}
The first statement follows from  (\ref{contition to be epsilon-Kaehel}).
The second statement is proved in \cite{AC1} Proposition 20.
\end{proof}

\begin{teor} \label{second fundamental form of a Kaehler submanifold} Let $(\widetilde M^{4n},Q,\widetilde
g)$ be a \pqK manifold with non vanishing reduced scalar curvature
$\nu$ and $(M^{2m},\mathcal{J^\epsilon},g)$ an \aeH sub\-ma\-ni\-fold of
$\widetilde M^{4n}$.
\begin{enumerate}
\item[a)] If $(M^{2m},\mathcal{J^\epsilon},g)$ is $\epsilon$-K\"ahler then the second fundamental
form $h$ of $M$ sa\-tis\-fies the identity
\begin{equation} \label{2 fundamental form in Kaehler submanifold}
h(X,\mathcal{J^\epsilon} Y)=h(\mathcal{J^\epsilon} X,Y)=J^\epsilon
h(X,Y) \qquad \qquad \forall \, X,Y \in TM.
\end{equation}
In particular $h(\mathcal{J^\epsilon} X,\mathcal{J^\epsilon} Y)= \epsilon
h(X,Y)$.
\item[b)] Conversely, if the identity (\ref{2 fundamental form in
Kaehler submanifold}) holds on an \aeH submanifold $M^{2m}$ of
$\widetilde M^{4n}$ then it is either a $\epsilon$-K\"ahler
submanifold or a para-quaternionic (K\"ahler) submanifold and
these cases cannot happen simultaneously.
\end{enumerate}
\end{teor}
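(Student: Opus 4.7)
The plan is to compare two expressions for $\widetilde\nabla_X(J^\epsilon Y)$, $X,Y\in TM$: the Gauss decomposition on one hand, and the product rule together with (\ref{nabla J}) on the other. Since $J^\epsilon$ is $\widetilde g$-skew and preserves $TM$, it also preserves $T^\bot M$; both sides then split cleanly into tangential and normal parts, yielding the master identity
\[
(\nabla_X \mathcal J^\epsilon)Y+\bigl[h(X,\mathcal J^\epsilon Y)-J^\epsilon h(X,Y)\bigr]=-\omega_3(X)J_2Y-\epsilon\omega_2(X)J_3Y.
\]

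For (a), if $M$ is $\epsilon$-K\"ahler then Theorem \ref{almost Hermitian to be Kaehler} gives $\nabla \mathcal J^\epsilon=0$ and $\omega_2|_{TM}=\omega_3|_{TM}=0$, so the master identity collapses to $h(X,\mathcal J^\epsilon Y)=J^\epsilon h(X,Y)$. Symmetry of $h$ gives $h(\mathcal J^\epsilon X,Y)=h(Y,\mathcal J^\epsilon X)=J^\epsilon h(Y,X)=J^\epsilon h(X,Y)$, and iterating yields $h(\mathcal J^\epsilon X,\mathcal J^\epsilon Y)=\epsilon h(X,Y)$.

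For (b), assuming (\ref{2 fundamental form in Kaehler submanifold}) the bracketed term in the master identity vanishes, so its normal component reads
\[
\omega_3(X)\xi+\epsilon\omega_2(X)J^\epsilon\xi=0\qquad\forall\,X\in TM,\ \xi\in V:=[J_2 TM]^\bot,
\]
where I used $[J_3 Y]^\bot=J^\epsilon[J_2 Y]^\bot$. If $V=0$ then $J_2TM\subset TM$ and $J_3TM=J_1J_2TM\subset TM$, so $M$ is an almost para-quaternionic submanifold and Proposition \ref{para-quaternionic submanifold in para-quaternionic Kaehler manifold is Kaehler and totally geodesic} yields that $M$ is a totally geodesic para-quaternionic K\"ahler submanifold. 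If $V\ne 0$, note that $V$ is $J^\epsilon$-invariant (since $J^\epsilon[J_2Y]^\bot=-[J_2\mathcal J^\epsilon Y]^\bot\in V$); applying $J^\epsilon$ to the displayed equation gives the companion $\omega_2(X)\xi+\omega_3(X)J^\epsilon\xi=0$, and whenever $\{\xi,J^\epsilon\xi\}$ is linearly independent both relations force $\omega_2(X)=\omega_3(X)=0$. In the Hermitian case ($\epsilon=-1$) this is automatic since $J^\epsilon$ has no real eigenvectors, so Theorem \ref{almost Hermitian to be Kaehler} gives that $M$ is K\"ahler.

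The main obstacle is the para case, where $V$ could a priori consist of eigenvectors of $J^\epsilon$ only and yield just one linear relation between $\omega_2,\omega_3$. I would resolve this by decomposing $V=V^+\oplus V^-$, with $V^\pm=V\cap T^\pm \widetilde M$ the intersections with the $\pm 1$-eigenspaces of $J^\epsilon$, and showing both summands are non-zero. Suppose $V^-=0$: since $J_2$ anticommutes with $J^\epsilon=J_1$, $J_2 T^+ M\subset T^-\widetilde M$, hence $[J_2 T^+ M]^\bot\subset V\cap T^-\widetilde M=V^-=0$, so $J_2 T^+ M\subset T^- M$. The equalities $\dim T^+ M=m=\dim T^- M$ combined with the injectivity of $J_2$ force $J_2 T^+ M=T^- M$, whence $J_2 T^- M=J_2^2 T^+ M=T^+ M$ and $J_2 TM=TM$, contradicting $V\ne 0$. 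Thus $V^\pm\ne 0$; a vector $\xi\in V$ with non-zero components in each fails to be an eigenvector of $J^\epsilon$, the preceding argument applies, and Theorem \ref{almost Hermitian to be Kaehler} gives that $M$ is para-K\"ahler. Finally, the two cases are mutually exclusive: para-quaternionic invariance means $J_2 TM=TM$ while total $\epsilon$-complexity means $J_2 TM\perp TM$, incompatible for $TM\ne 0$.
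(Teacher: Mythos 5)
Part (a) is exactly the paper's argument. In part (b) you take a genuinely different route: the paper keeps the full identity $(\nabla_X\mathcal J^\epsilon)Y=(-\omega_3(X)Id-\epsilon\omega_2(X)\mathcal J^\epsilon)J_2Y$ and argues from the tangency of the left-hand side that either $J_2T_xM=T_xM$ or $\omega_2|_x=\omega_3|_x=0$, whereas you project onto $T^\bot M$ and analyse the relation $\omega_3(X)\xi+\epsilon\omega_2(X)J^\epsilon\xi=0$ on $V=[J_2T_xM]^\bot$. Your version is in fact more careful on one point: in the para case the operator $aId+b\mathcal J^\epsilon$ fails to be invertible when $a=\pm b\neq 0$, so the paper's one-line inference silently skips the degenerate possibility $\omega_3=\pm\omega_2\neq 0$ combined with $J_2T_xM\not\subset T_xM$; your decomposition $V=V^+\oplus V^-$, with the dimension count showing both eigencomponents are nonzero unless $V=0$, closes exactly that loophole. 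The supporting computations ($J^\epsilon$ preserves $T^\bot M$ and hence commutes with the normal projection, $[J_3Y]^\bot=J^\epsilon[J_2Y]^\bot$, and $J_2T^+M=T^-M$ when $V^-=0$) are all correct.

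The one genuine omission is the passage from the pointwise dichotomy to the global statement. What you prove is that at each $x\in M$ either $J_2T_xM=T_xM$ or $\omega_2|_x=\omega_3|_x=0$. To invoke Theorem \ref{almost Hermitian to be Kaehler} and conclude that $M$ is $\epsilon$-K\"ahler you need $\omega_2=\omega_3=0$ at \emph{every} point, and to invoke Proposition \ref{para-quaternionic submanifold in para-quaternionic Kaehler manifold is Kaehler and totally geodesic} you need $Q$-invariance of $T_xM$ at every point; a priori the alternative could switch from point to point, and your closing remark only shows the two cases exclude each other at a fixed $x$. The missing step is the paper's connectedness argument: $M_1=\{x\in M : J_2T_xM=T_xM\}$ and $M_2=\{x\in M : J_2T_xM\perp T_xM\}$ are both closed, disjoint, and cover $M$, so for $M$ connected one of them is all of $M$. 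This is easy to add, but as written your proof of (b) yields only the pointwise alternative.
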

\begin{proof} (a)
 Let $(M^{2m},\mathcal{J^\epsilon},g)$ be an \aeH submanifold of
$\widetilde M $.  By  (\ref{nabla J}),
\begin{equation} \label{nabla J1}
\begin{array} {ll}
 (\widetilde\nabla_X J^\epsilon)Y &= (\nabla_X \mathcal{J^\epsilon})Y + h(X,\mathcal{J^\epsilon}Y) - J^\epsilon h(X,Y)\\
 &=-\omega_3(X)J_2Y - \epsilon \omega_2(X)J_3Y,  \qquad  \qquad X,Y \in TM.
\end{array}
\end{equation}
From  Theorem (\ref{almost Hermitian to be Kaehler}), we get
\[
0=(\nabla_X \mathcal{J^\epsilon})Y + h(X,\mathcal{J^\epsilon}Y) -
J^\epsilon h(X,Y), \qquad \forall X,Y \in TM
\]
and, from $(\nabla_X \mathcal{J^\epsilon})Y=0$   it is clear that
if $(M,\mathcal{J^\epsilon})$ is $\epsilon$-K\"ahler then (\ref{2
fundamental form in Kaehler submanifold}) holds.
\par
(b) Conversely, let assume that (\ref{2 fundamental form in
Kaehler submanifold}) holds on the \aeH submanifold
$(M,\mathcal{J^\epsilon},g)$. Then for any $X,Y \in T_xM$, from
(\ref{nabla J1}) we have
\[(\nabla_X \mathcal{J^\epsilon})Y=(\widetilde \nabla_X J^\epsilon)Y.\]
Hence, $\forall \, X,Y\in T_xM$,
\[
\begin{array}{l}
(\nabla_X \mathcal{J^\epsilon})Y = -\omega_3(X)J_2Y - \epsilon \omega_2(X)J_3Y =(-\omega_3(X)Id - \epsilon \omega_2(X) \mathcal{J^\epsilon})J_2Y \in
T_xM. 
\end{array}
\]
Then, either $J_2 T_xM=T_xM$ i.e. $T_xM$ is a para-quaternionic vector space or $\omega_2|_x=\omega_3|_x=0$
and by Theorem  (\ref{almost Hermitian to be Kaehler}) the two conditions cannot happen simultaneously.
The set $M_1=\{x \in M  \; | \; J_2 T_xM=T_xM \}$ is a closed subset  and the complementary open subset $M_2=\{ x \in M \; | \; \omega_2|_x=\omega_3|_x=0 \}$
is a closed subset as well since, from
 Theorem  (\ref{almost Hermitian to be Kaehler}), $M_2=\{ x \in M \; | \; J_2 T_xM \perp T_xM \}$. Then, either $M_2=0$ and $M=M_1$ is a para-quaternionic  K\"ahler submanifold
or  $M_1=0$ and $M=M_2$ is
  $\epsilon$-K\"ahler.
\end{proof}

\begin{coro} \label{totally geodesic Hermitian submanifold is
either Kaehler or a pQK submanifold}  A totally geodesic \aeH
submanifold $(M,\mathcal{J^\epsilon},g)$ of a para-quaternionic
K\"ahler manifold $(\widetilde M^{4n},Q,\widetilde g)$ with
$\nu \neq 0$ is either a $\epsilon$-K\"ahler submanifold or a
para-quaternionic submanifold and these conditions cannot happen
simultaneously.
\end{coro}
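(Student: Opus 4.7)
The plan is to observe that this corollary is an immediate consequence of part (b) of Theorem \ref{second fundamental form of a Kaehler submanifold}, since the totally geodesic hypothesis makes the fundamental identity (\ref{2 fundamental form in Kaehler submanifold}) vacuous. Concretely, since $M$ is totally geodesic the second fundamental form $h$ vanishes identically, so for every $X,Y\in TM$ both sides of
\[
h(X,\mathcal{J^\epsilon} Y)=h(\mathcal{J^\epsilon} X,Y)=J^\epsilon h(X,Y)
\]
are zero and the identity is trivially satisfied.

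Having verified the hypothesis of Theorem \ref{second fundamental form of a Kaehler submanifold}(b), we can invoke its conclusion directly: the \aeH submanifold $(M,\mathcal{J^\epsilon},g)$ must be either $\epsilon$-K\"ahler or a para-quaternionic (K\"ahler) submanifold, and the dichotomy is exclusive. The exclusivity was already established in the proof of Theorem \ref{second fundamental form of a Kaehler submanifold}(b) via the two complementary subsets $M_1=\{x\in M\mid J_2T_xM=T_xM\}$ and $M_2=\{x\in M\mid \omega_2|_x=\omega_3|_x=0\}$, which are both closed and disjoint (by Theorem \ref{almost Hermitian to be Kaehler}, the latter being equivalent to total $\epsilon$-complexity $J_2T_xM\perp T_xM$); by connectedness of $M$ only one of them can be all of $M$.

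Since essentially no further computation is required, there is no real obstacle: the proof is just the remark $h\equiv 0\Longrightarrow$ (\ref{2 fundamental form in Kaehler submanifold}) followed by an appeal to the preceding theorem. The only point worth stating explicitly is that the assumption $\nu\neq 0$ is used through Theorem \ref{second fundamental form of a Kaehler submanifold}, which in turn relies on Theorem \ref{almost Hermitian to be Kaehler} where non-vanishing reduced scalar curvature is crucial for the equivalence of $\epsilon$-K\"ahler and totally $\epsilon$-complex.
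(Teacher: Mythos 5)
Your proposal is correct and is essentially identical to the paper's own proof: since $h\equiv 0$ the identity (\ref{2 fundamental form in Kaehler submanifold}) holds trivially, and the conclusion is then read off from Theorem \ref{second fundamental form of a Kaehler submanifold}(b). The extra remarks on exclusivity and on where $\nu\neq 0$ enters are accurate but not needed beyond what the cited theorem already provides.
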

\begin{proof} The  statement follows directly from Theorem
(\ref{second fundamental form of a Kaehler submanifold}) since
(\ref{2 fundamental form in Kaehler submanifold}) certainly holds
for a totally geodesic submanifold ($h=0$).
\end{proof}


The following results have been  proved in \cite{AC1}.

\begin{prop} \label{shape operator anti commutes with (para) complex structure}
{\rm{(\cite{AC1})}} The shape operator $A$ of an $\epsilon$-K\"ahler  submanifold
    $(M^{2m}, \mathcal{J^\epsilon}, g)$ of a para-quaternionic
K\"ahler manifold $(\widetilde
 M^{4n},Q,\widetilde g)$ anticommutes with $\mathcal{J^\epsilon}$, that
is $A \mathcal{J^\epsilon}=-\mathcal{J^\epsilon} A$.
\end{prop}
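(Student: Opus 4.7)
The plan is to reduce the claim directly to the fundamental identity
\[
h(\mathcal{J^\epsilon} X, Y) = h(X, \mathcal{J^\epsilon} Y) = J^\epsilon h(X,Y),
\]
already established in Theorem \ref{second fundamental form of a Kaehler submanifold}, combined with the Weingarten relation $g(A^\xi X, Y) = \widetilde g(h(X,Y), \xi)$ and the fact that $\mathcal{J^\epsilon}$ is skew-symmetric with respect to $g$ (inherited from $J^\epsilon \in Q$ acting skew-symmetrically on $(\widetilde M, \widetilde g)$).

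First I would fix an arbitrary normal vector $\xi \in T^\bot M$ and arbitrary tangent vectors $X, Y \in TM$, and simply compute $g(\mathcal{J^\epsilon} A^\xi X, Y)$ in two ways. On one hand, using skew-symmetry of $\mathcal{J^\epsilon}$,
\[
g(\mathcal{J^\epsilon} A^\xi X, Y) = -g(A^\xi X, \mathcal{J^\epsilon} Y) = -\widetilde g\bigl(h(X, \mathcal{J^\epsilon} Y), \xi\bigr).
\]
On the other hand, applying the identity (\ref{2 fundamental form in Kaehler submanifold}) to replace $h(X, \mathcal{J^\epsilon} Y)$ by $h(\mathcal{J^\epsilon} X, Y)$, the right-hand side becomes $-\widetilde g(h(\mathcal{J^\epsilon} X, Y), \xi) = -g(A^\xi \mathcal{J^\epsilon} X, Y)$. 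Since $Y$ and $\xi$ are arbitrary, this yields $\mathcal{J^\epsilon} A^\xi = -A^\xi \mathcal{J^\epsilon}$ for every normal $\xi$, which is precisely the asserted anticommutation $A \mathcal{J^\epsilon} = -\mathcal{J^\epsilon} A$ at the level of the shape operator viewed as $A : TM \otimes T^\bot M \to TM$.

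There is really no obstacle here: the substantive work has already been done in Theorem \ref{second fundamental form of a Kaehler submanifold}, and the present statement is a one-line consequence via the Weingarten formula. The only bookkeeping point worth mentioning is to confirm that $\mathcal{J^\epsilon}$ is $g$-skew, which follows because $J^\epsilon$ is a local section of the skew-symmetric bundle $Q \subset \operatorname{End} T\widetilde M$ and $g = \widetilde g|_M$. Thus the proof reduces to the two-line calculation above.
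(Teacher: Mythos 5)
Your argument is correct. Note, however, that the paper itself offers no proof of this proposition: it is stated as a quoted result from \cite{AC1}, so there is no ``paper's approach'' to match. Your derivation is a legitimate self-contained alternative within this paper's framework: the chain $g(\mathcal{J^\epsilon}A^\xi X,Y)=-g(A^\xi X,\mathcal{J^\epsilon}Y)=-\widetilde g(h(X,\mathcal{J^\epsilon}Y),\xi)=-\widetilde g(h(\mathcal{J^\epsilon}X,Y),\xi)=-g(A^\xi\mathcal{J^\epsilon}X,Y)$ is valid, non-degeneracy of $g$ on $TM$ closes it, and in fact you only need the first equality $h(\mathcal{J^\epsilon}X,Y)=h(X,\mathcal{J^\epsilon}Y)$ of (\ref{2 fundamental form in Kaehler submanifold}), not the full identity with $J^\epsilon h(X,Y)$. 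There is no circularity, since Theorem \ref{second fundamental form of a Kaehler submanifold}(a) is proved from (\ref{nabla J1}) and Theorem \ref{almost Hermitian to be Kaehler} without reference to the present proposition; it is also consistent with how the paper later re-derives the same fact in Lemma \ref{max Kaehler proprieties}(4) for maximal submanifolds. The one point to flag is that your route inherits the hypothesis $\nu\neq 0$, because Theorem \ref{second fundamental form of a Kaehler submanifold}(a) rests on the implication ``$\epsilon$-K\"ahler $\Rightarrow \omega_2|_{TM}=\omega_3|_{TM}=0$'' of Theorem \ref{almost Hermitian to be Kaehler}, which requires $\nu\neq 0$; this is harmless here since the paper assumes $\nu\neq 0$ throughout, but a proof valid for arbitrary $\nu$ (as in \cite{AC1}) would need a different argument.
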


\begin{coro} \label{Kaehler and para-Kaehler submanifolds are minimal}
{\rm{(\cite{AC1})}}  Any $\epsilon$-K\"ahler  submanifold of a para-quaternionic
K\"ahler ma\-ni\-fold is minimal.
\end{coro}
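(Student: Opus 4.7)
The corollary is an immediate consequence of Proposition \ref{shape operator anti commutes with (para) complex structure}, which has already been recorded. My plan is to translate the anticommutation relation into the vanishing of the trace of each shape operator, which by definition gives minimality.

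Concretely, fix a point $x \in M$ and a normal vector $\xi \in T_x^\bot M$. By Proposition \ref{shape operator anti commutes with (para) complex structure}, the endomorphism $A^\xi \in \operatorname{End} T_xM$ satisfies $A^\xi \mathcal{J^\epsilon} = -\mathcal{J^\epsilon} A^\xi$. Since $\mathcal{J^\epsilon}$ is an invertible endomorphism of $T_xM$ (its square is $\epsilon\, Id$, so $(\mathcal{J^\epsilon})^{-1} = \epsilon \mathcal{J^\epsilon}$ in either signature), the first step is to rewrite this as
\[
A^\xi = -\mathcal{J^\epsilon} A^\xi (\mathcal{J^\epsilon})^{-1}.
\]
Taking the trace and using the conjugation-invariance of $\operatorname{tr}$ then yields $\operatorname{tr}(A^\xi) = -\operatorname{tr}(A^\xi)$, hence $\operatorname{tr}(A^\xi) = 0$.

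The second step is to recall the identification of the mean curvature vector with the trace of the shape operator: for any orthonormal (or, in our pseudo-Riemannian setting, $g$-pseudo-orthonormal) frame $\{e_i\}$ of $T_xM$ with $g(e_i,e_i) = \varepsilon_i$, one has $g^\bot(H(x),\xi) = \sum_i \varepsilon_i g(h(e_i,e_i),\xi) = \sum_i \varepsilon_i g(A^\xi e_i, e_i) = \operatorname{tr}(A^\xi)$ for every normal $\xi$. Since this vanishes for all $\xi \in T_x^\bot M$ and the induced normal metric $g^\bot$ is non-degenerate (because $M$ is a $\widetilde g$-non-degenerate submanifold), $H(x) = 0$. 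As $x$ was arbitrary, $M$ is minimal.

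There is no real obstacle here: the whole content is that conjugation by an invertible operator preserves the trace. The only mild point to check is the pseudo-Riemannian version of the identity $H = \operatorname{tr} A^\xi$, which uses only the non-degeneracy of $g$ on $T_xM$ (guaranteed by the definition of an almost $\epsilon$-Hermitian submanifold) and so is unaffected by the indefinite signature.
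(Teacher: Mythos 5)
Your proof is correct and is exactly the standard argument: the anticommutation of $A^\xi$ with the invertible operator $\mathcal{J^\epsilon}$ forces $\operatorname{tr}A^\xi=0$ by conjugation-invariance of the trace, and non-degeneracy of the normal metric then kills the mean curvature vector. The paper itself gives no proof, merely citing \cite{AC1}, but this is precisely the intended deduction from Proposition \ref{shape operator anti commutes with (para) complex structure}, and your care with the pseudo-orthonormal frame and the signs $\varepsilon_i$ in the pseudo-Riemannian trace formula is appropriate.
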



We conclude this section with the following result concerning almost $\epsilon$-K\"ahler submanifolds.

\begin{teor} \label{almost Kaehler is Kaehler} Let $(\widetilde M^{4n},Q,\widetilde
g)$ be a para-quaternionic K\"ahler manifold with non vanishing
reduced scalar curvature $\nu$. Then any   almost
$\epsilon$-K\"ahler submanifold \\ $(M^{2m},\mathcal{J^\epsilon},g)\,$ of $\widetilde M$ is
$\epsilon$-K\"ahler.
\par
\end{teor}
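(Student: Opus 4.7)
My plan is to reduce, via Theorem~\ref{almost Hermitian to be Kaehler}(2) and the hypothesis $\nu\neq 0$, to proving the pointwise vanishings $\omega_2|_{TM}=\omega_3|_{TM}=0$: these two equalities are equivalent to $M$ being $\epsilon$-K\"ahler (and to $M$ being totally $\epsilon$-complex). The $m=1$ case is automatic since $dF=0$ is vacuous.

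First I would obtain an explicit formula for $dF$ on $M$. Combining (\ref{nabla J}) specialised to $\alpha=1$, namely $\widetilde\nabla_XJ^\epsilon=-\omega_3(X)J_2-\epsilon\,\omega_2(X)J_3$, with the Gauss decomposition~(\ref{nabla J1}) of $(\widetilde\nabla_XJ^\epsilon)Y$ into tangential and normal parts, I would read off the tangential identity~(\ref{contition to be epsilon-Kaehel}):
\[
(\nabla_X\mathcal{J}^\epsilon)Y=-\omega_3(X)[J_2Y]^T-\epsilon\,\omega_2(X)[J_3Y]^T,\qquad X,Y\in TM.
\]
Since $g([J_\alpha Y]^T,Z)=\widetilde g(J_\alpha Y,Z)=F_\alpha(Y,Z)$ for $Z\in TM$, and $dF(X,Y,Z)=\sum_{\mathrm{cycl}}g((\nabla_X\mathcal{J}^\epsilon)Y,Z)$, this yields on $M$
\[
dF=-\,\omega_3|_{TM}\wedge\widetilde F_2-\epsilon\,\omega_2|_{TM}\wedge\widetilde F_3,\qquad \widetilde F_\alpha:=F_\alpha|_{TM}.
\]
The same identity also follows by restricting the integrability condition~(\ref{exterior differential of fundamental 2-form}) for $\alpha=1$ to $TM$, using $F=-\epsilon\,F'_1|_{TM}$. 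The almost $\epsilon$-K\"ahler hypothesis $dF=0$ is therefore the pointwise three-form identity
\[
\omega_3|_{TM}\wedge\widetilde F_2+\epsilon\,\omega_2|_{TM}\wedge\widetilde F_3=0\quad\text{on }M. \qquad(\star)
\]

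The main obstacle is to extract from $(\star)$ the pointwise vanishings of $\omega_2|_{TM}$ and $\omega_3|_{TM}$. I would argue pointwise on the decomposition $T_xM=\overline{T_xM}\oplus\mathcal{D}_x$ from Section~3. On the $Q$-invariant part $\overline{T_xM}$ (of dimension $\ge 4$, assumed non-degenerate) the forms $\widetilde F_2,\widetilde F_3$ restrict to the non-degenerate K\"ahler forms of $J_2,J_3$; a Lefschetz-type injectivity of the wedging maps $\Lambda^1\to\Lambda^3$ given by $\widetilde F_\alpha$, together with the linear independence of $\widetilde F_2$ and $\widetilde F_3$, forces $\omega_2|_{\overline{T_xM}}=\omega_3|_{\overline{T_xM}}=0$. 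On the $\mathcal{J}^\epsilon$-invariant pure complement $\mathcal{D}_x$, the two-forms $\widetilde F_2,\widetilde F_3$ are highly degenerate (cf.\ the analysis around~(\ref{Nijenhuis vector belongs to para-quaternionic component of tangent space}) in the proof of Theorem~\ref{almost Hermitian is Hermitian if a certain 1-form vanishes}), and the vanishing of $\omega_2,\omega_3$ on $\mathcal{D}_x$ is extracted by feeding $(\star)$ mixed test vectors having one entry in $\overline{T_xM}$ and the rest in $\mathcal{D}_x$. The remaining subcases ($\dim\overline{T_xM}\le 2$, or $\overline{T_xM}$ isotropic) require separate treatment along the lines of the quaternionic analogue in~\cite{AM}. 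Once $\omega_2|_{TM}=\omega_3|_{TM}=0$ is established at every point, Theorem~\ref{almost Hermitian to be Kaehler}(2) concludes that $M$ is $\epsilon$-K\"ahler.
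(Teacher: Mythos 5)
Your reduction of $dF=0$ to the pointwise three-form identity $(\star)$ is the same starting point as the paper's (its equation (\ref{condition for a Kaehler form to be closed}), obtained there from the structure equations), and the final appeal to condition $k_2$ of Theorem \ref{almost Hermitian to be Kaehler} would be fine. The gap is in the middle: $(\star)$ does \emph{not} force $\omega_2|_{T_xM}=\omega_3|_{T_xM}=0$ pointwise, and your ``Lefschetz-type injectivity together with linear independence of $\widetilde F_2,\widetilde F_3$'' step is not a valid argument. Injectivity of each wedging map $\omega\mapsto\widetilde F_\alpha\wedge\omega$ would only give the conclusion if the two images intersected trivially in $\Lambda^3$, and they do not: on a $4$-dimensional non-degenerate $Q$-invariant subspace both maps are isomorphisms onto $\Lambda^3\overline{T_xM}$, so the equation $F^T_2\wedge\omega^T_3=\epsilon F^T_3\wedge\omega^T_2$ has the $4$-parameter family of solutions $\omega^T_3=\epsilon\,\omega^T_2\circ\mathcal{J}^\epsilon$ with $\omega^T_2$ arbitrary --- exactly the nontrivial solution the paper itself exhibits when discussing the quaternionic $4$-dimensional case. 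A point with $T_xM=\overline{T_xM}$ of dimension $4$ cannot be excluded a priori without circularity, since ``totally $\epsilon$-complex'' is part of what is being proved; at such a point your scheme of reading off the vanishing of $\omega_2,\omega_3$ from $(\star)$ alone breaks down.

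What $(\star)$ does yield pointwise --- and this is the paper's route --- is the weaker relation $\psi=\omega^T_3\circ\mathcal{J}^\epsilon-\omega^T_2=0$, i.e.\ integrability of $\mathcal{J}^\epsilon$ via Theorem \ref{almost Hermitian is Hermitian if a certain 1-form vanishes}. The paper argues by contradiction at a point where $N_{\mathcal{J}^\epsilon}\neq0$, uses Lemma \ref{dimension 0,2 of D where N non si annulla} to pin down $\dim\mathcal{D}_x\in\{0,2\}$ and $\psi(\overline{T_xM})=0$, and then evaluates $(\star)$ on carefully chosen triples in the three configurations ($T_xM=\overline{T_xM}$; $\overline{T_xM}$ non-degenerate of codimension $2$; $\overline{T_xM}$ degenerate) to force $\psi=0$ in each. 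These are precisely the ``remaining subcases'' you defer to the quaternionic analogue in one sentence, but they are the technical core of the proof, not a footnote, and the degenerate case has no Riemannian counterpart to borrow from. Once integrability is established everywhere, $dF=0$ together with $N_{\mathcal{J}^\epsilon}=0$ gives $\nabla F=0$ by the standard almost-Hermitian identity, and only \emph{a posteriori} does one obtain $\omega_2|_{TM}=\omega_3|_{TM}=0$. You should restructure the argument to target $\psi=0$ rather than the vanishing of $\omega_2$ and $\omega_3$.
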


\begin{proof}
 By identity (\ref{exterior differential
of fundamental 2-form}), the condition that the  K\"ahler form
$F={F_1}_{|_M}$ is closed can be written as
\begin{equation} \label{condition for a Kaehler form to be closed}
 F^T_2\wedge\omega_3^T= \epsilon F^T_3\wedge\omega^T_2 \, ,
\end{equation}
where $F^T_\alpha, \omega^T_\alpha$ are the restriction of the
forms $F_\alpha,\omega_\alpha$ to $M$. We will prove that (\ref{condition for a Kaehler form to be closed})  implies
integrability.

Let suppose  that there exists a point $x$ of the almost $\epsilon$-K\"ahler submanifold $M$
where ${N_\mathcal{J^\epsilon}}_{|_x} \neq 0$.
From  Lemma (\ref{dimension 0,2 of D where
N non si annulla}) then $\dim \mathcal D_x=0$ or 2 and from  (\ref{Nijenhuis vector belongs to para-quaternionic component of tangent space}) one has  $\dim \overline{T_xM}>0$.

Let first consider  the case that $T_xM= \overline{T_xM}$. Observe that by hypotheses $T_xM$ is non degenerate (than $\dim T_xM=4k$).
By applying  (\ref{condition for a Kaehler form to be closed}) to the triple $(X,J_2X,J_1X)$ for $X \in T_xM$ no eigenvector of any compatible para-complex structure in $Q$,
we have
\[
\begin{array}{ll}
&(F^T_2\wedge\omega^T_3)(X,J_2X,J_1X)=-\Vert X\Vert^2\omega^T_3(J_1X)=-\omega^T_3(J_1X)\\
= &\epsilon(F^T_3\wedge\omega^T_2)(X,J_2X,J_1X)=\epsilon
F^T_3(J_2X,J_1X)\omega^T_2(X)= -\Vert X\Vert^2\omega^T_2(X).
\end{array}
\]
Hence $\omega^T_3= \epsilon \omega^T_2 \circ \mathcal{J^\epsilon}$ and, from Theorem (\ref{almost Hermitian is Hermitian if a certain 1-form vanishes}), it follows that
${N_\mathcal{J^\epsilon}}_{|_x}= 0$. Contradiction.

\vskip 0.3cm

Let now suppose that  $\textrm{codim} \;\overline{T_xM}=2$.
From Lemma (\ref{dimension 0,2 of D where
N non si annulla}) it is $\psi(\overline{T_xM})=0$.
If $\overline{T_xM}$ is non degenerate, calculating  both sides of equation (\ref{condition
for a Kaehler form to be closed}) on vectors $X,J_2X,Y$, where $X$
is a unit vector from $\overline{T_xM}$ and $Y \in \mathcal D_x$ is the
$\mathcal{J^\epsilon}$-invariant  orthogonal complement to $\overline{T_xM}$ in $T_xM$
 we get
$$
(F^T_2\wedge\omega^T_3)(X,J_2X,Y)=\epsilon \omega^T_3(Y)=-(F^T_3
\wedge \eta) (X,J_2X,Y)=0.
$$
Hence, $\omega^T_3(\mathcal D_x)=0=\omega_2^T(\mathcal D_x)$ which implies that ${N_\mathcal{J^\epsilon}}_{|_x}= 0$. Contradiction.
In case that  $\overline{T_xM}$  is degenerate (even totally isotropic) and $\dim \mathcal D_x =2$ with $\mathcal D_x$ any $\mathcal{J^\epsilon}$-invariant
 complement to $\overline{T_xM}$ in $T_xM$, by  evaluating (\ref{condition for a Kaehler form to be closed}) on the triple $(Y, \mathcal{J}^\epsilon Y, X)$ with $\{Y,\mathcal{J}^\epsilon
Y\} $ any  basis of $\mathcal D_x$   and $X \in \ker g_{\overline{T_xM}}$
it is
\[
F^T_2 \wedge \omega^T_3(X,Y,\mathcal{J}^\epsilon Y)= <J_2X,Y>
\omega^T_3(\mathcal{J}^\epsilon Y) - <J_3Y,X> \omega^T_3(Y);
\]
\[
 \epsilon(F^T_3 \wedge \omega^T_2)(X,Y,\mathcal{J}^\epsilon Y)= \epsilon[<J_3X,Y> \omega^T_2(\mathcal{J}^\epsilon Y) -\epsilon  <J_2Y,X> \omega^T_2(Y)]
\]
i.e.
\[
<J_2X,Y> [\omega_3(\mathcal{J}^\epsilon Y) -   \omega_2(Y)]   - <J_3Y,X>
[\omega_3(Y) -\epsilon \omega_2(\mathcal{J}^\epsilon Y)]=0.
\]

Then, considering the non degeneracy of $T_xM$,  the only solution is given by
\[
[\omega^T_3 \circ \mathcal{J}^\epsilon Y -   \omega^T_2]= [\omega^T_3 - \epsilon \omega^T_2
\circ \mathcal{J}^\epsilon Y)]=0, \quad \forall Y \in \mathcal D_x
\]
i.e. $\psi(\mathcal D_x) = 0$  which leads again to the contradiction  that ${N_\mathcal{J^\epsilon}}_{|_x}= 0$.
\end{proof}

We state the following corresponding  result regarding quaternionic geometry:
\begin{teor} \label{almost Kaehler in quaternionic case  are Kaehler except in 4 dimension}  Let $(\widetilde M^{4n},Q,\widetilde
g)$ be a quaternionic K\"ahler manifold with non zero
reduced scalar curvature $\nu$. Then any   almost
K\"ahler submanifold $(M^{2m},\mathcal{J},g)$, $\;n \neq 2$ of $\widetilde M$ is
K\"ahler.
\par
\end{teor}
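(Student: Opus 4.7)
The plan is to adapt the argument of Theorem \ref{almost Kaehler is Kaehler} to the quaternionic setting, in which $\epsilon = -1$ throughout. First I would derive the quaternionic analogue of the integrability identity (\ref{exterior differential of fundamental 2-form}) by differentiating the quaternionic K\"ahler structure equations: this yields a 3-form relation of the shape
\[
\nu\bigl[dF_\alpha + F_\beta\wedge\omega_\gamma - \omega_\beta\wedge F_\gamma\bigr] = 0,
\]
which upon restriction to $M$ and combination with $dF = 0$ and $\nu\neq 0$ produces an equality $F_2^T\wedge\omega_3^T = F_3^T\wedge\omega_2^T$ on $M$.

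Next I would argue by contradiction. Suppose $x\in M$ satisfies $N_\mathcal J|_x \neq 0$. Lemma \ref{dimension 0,2 of D where N non si annulla} is purely algebraic, its proof using only identity (\ref{Nijenhuis}), and so transfers verbatim to the quaternionic case, forcing $\dim\mathcal D_x\in\{0,2\}$. In the subcase $\operatorname{codim}\overline{T_xM} = 2$, testing the 3-form identity on triples $(X, J_2X, Y)$ with $X\in\overline{T_xM}$ and $Y\in\mathcal D_x$ gives $\omega_2^T(\mathcal D_x) = \omega_3^T(\mathcal D_x) = 0$; combined with $\psi(\overline{T_xM}) = 0$ from the lemma this yields $\psi = 0$ at $x$, contradicting $N_\mathcal J|_x\neq 0$ via Theorem \ref{almost Hermitian is Hermitian if a certain 1-form vanishes}. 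This step is actually cleaner than in the para proof because the quaternionic ambient metric is Hermitian with respect to the quaternionic structure, so $\overline{T_xM}$ is automatically non-degenerate and no separate degenerate sub-subcase arises. In the subcase $T_xM = \overline{T_xM}$ of real dimension $4k$, testing on $(X, J_2X, J_1X)$ as in the para proof gives $\omega_3^T = -\omega_2^T\circ\mathcal J$ and again a contradiction.

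The main obstacle, and the reason for the hypothesis $n\neq 2$, lies in the pure quaternionic subcase when $\dim M = 4$. If $n = 2$ and $T_xM$ is itself a 4-dimensional quaternionic subspace of $\widetilde M^8$, then by the quaternionic analogue of Proposition \ref{para-quaternionic submanifold in para-quaternionic Kaehler manifold is Kaehler and totally geodesic} the submanifold $M^4$ would be totally geodesic quaternionic K\"ahler, and the induced structure $\mathcal J = J_1|_{TM}$ on such a low-dimensional quaternionic K\"ahler manifold may carry closed but non-parallel K\"ahler forms (global integrable compatible complex structures being generically obstructed in dimension four), so the elementary testing argument on $(X,J_2X,J_1X)$ is not strong enough to force $\psi = 0$ and the contradiction fails. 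Excluding $n = 2$ rules out precisely this pathological configuration and allows the contradiction argument to close uniformly, completing the proof.
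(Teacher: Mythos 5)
Your overall strategy -- restricting the quaternionic structure equations to get $F_2^T\wedge\omega_3^T=F_3^T\wedge\omega_2^T$ and then rerunning the case analysis of Theorem \ref{almost Kaehler is Kaehler} -- is the same as the paper's, and your treatment of the $\operatorname{codim}\overline{T_xM}=2$ subcase (including the observation that positive definiteness kills the degenerate sub-subcase) is fine. But your handling of the pure quaternionic subcase $T_xM=\overline{T_xM}$ contains a genuine error, and it is exactly the point where the dimension restriction lives. Testing (\ref{almost_kaehler in quaternionic geometry}) on $(X,J_2X,J_1X)$ in the quaternionic setting yields $\omega_3^T=+\,\omega_2^T\circ\mathcal J$ (not $-\,\omega_2^T\circ\mathcal J$ as you assert by carrying over the para-quaternionic sign). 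Since now $\mathcal J^2=-\mathrm{Id}$, this relation gives $\psi=\omega_3^T\circ\mathcal J-\omega_2^T=-2\,\omega_2^T$, which is \emph{not} forced to vanish; so there is no contradiction from this test alone. In the para-quaternionic case the sign produced by the structure equations is $\omega_3^T=\epsilon\,\omega_2^T\circ\mathcal J^\epsilon$, which composed with $(\mathcal J^\epsilon)^2=\epsilon\,\mathrm{Id}$ gives $\psi=0$; that cancellation is precisely what fails here. When $\dim\overline{T_xM}=4k\ge 8$ one rescues the argument by testing on additional triples $(X,J_2X,Y)$ with $Y\perp\mathbb{H}X$ inside $\overline{T_xM}$, which forces $\omega_2^T=\omega_3^T=0$ outright; when $\dim T_xM=4$ no such $Y$ exists, the pair $(\omega_2^T,\;\omega_3^T=\omega_2^T\circ\mathcal J)$ is a genuine nontrivial solution of (\ref{almost_kaehler in quaternionic geometry}), and the proof cannot close. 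This pointwise algebraic fact is the reason for excluding the four-dimensional case, as the paper states.

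Your proposed explanation of the dimension hypothesis is therefore misplaced: it is not about totally geodesic quaternionic $M^4$ carrying ``closed but non-parallel K\"ahler forms'' or global obstructions to integrable complex structures in dimension four -- the whole argument is local and algebraic at a single point $x$. Note also the internal inconsistency: you claim the test on $(X,J_2X,J_1X)$ yields a contradiction for pure quaternionic $T_xM$ of \emph{any} dimension $4k$, and then claim that for $\dim M=4$ the same test ``is not strong enough to force $\psi=0$.'' Both cannot hold. To repair the proof, redo the sign computation in the quaternionic conventions, observe that the single triple $(X,J_2X,J_1X)$ never suffices, and supply the extra test triples available only when $\dim\overline{T_xM}>4$ (or $\mathcal D_x\ne 0$), invoking Theorem \ref{almost Hermitian is Hermitian if a certain 1-form vanishes} only after $\omega_2^T=\omega_3^T=0$ has actually been established.
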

\begin{proof}
Here the condition for a submanifold to be  almost-K\"ahler is given
by the equation
\begin{equation} \label{almost_kaehler in quaternionic geometry}
F^T_2\wedge\omega^T_3 =
F^T_3\wedge\omega^T_2.
\end{equation}
The result  for dimension greater that 6 has been given in \cite{AM}.

By applying  the proof  of our Theorem (\ref{almost Kaehler is Kaehler}) to the other cases and considering that
in quaternionic case the  metric in each subspace of $T_xM$ is positive definite,  the conclusion follows.
With respect to the para-quaternionic case the difference concerning the dimension 4 follows from the fact that,
 in a point  $x \in M$ where the tangent space $T_xM$ is a 4 dimensional (Euclidean) quaternionic
vector space, the equation (\ref{almost_kaehler in quaternionic geometry}) admits the non trivial solution
$(\omega^T_2, \omega^T_3=\omega^T_2 \circ \mathcal{J})$ which  does not imply ${N_\mathcal{J^\epsilon}}_{|_x}= 0$
that  happens iff $\omega^T_3 \circ \mathcal{J} -
\omega^T_2  = 0$.
\end{proof}

\section{Maximal $\epsilon$-K\"ahler submanifolds of a para-quaternionic
K\"ahler manifold}

\subsection {The shape tensor $C$ of a $\epsilon$-K\"ahler submanifold}

Let $(M^{2n}, \mathcal{J^\epsilon},g)$ be a $\epsilon$-K\"ahler
submanifold of maximal possible dimension $2n$ of a
para-quaternionic K\"ahler manifold $(\widetilde M^{4n}, Q,\widetilde g)$ with $\nu \neq0$. We fix an  adapted basis $(J_1= J^\epsilon,
J_2,J_3= J_1 J_2, \; J_1^2 = \epsilon Id, J_2^2 = Id, \;
\mathcal{J^\epsilon}= J_1 |_{TM})$ of $Q$ and assume that it is defined on a neighbourhood of $M^{2n}$ in
$\widetilde M^{4n}$. From Theorem (\ref{almost Hermitian to be Kaehler}), the submanifold $M$ is totally $\epsilon$-complex.
We have the orthogonal decomposition
\begin{equation} \label{decomposition of tangent space}
T_x \widetilde{M} = T_xM \oplus J_2T_xM \qquad \qquad \forall \,
x\in M.
\end{equation}

Since $ {\omega_2}|_{T_xM}={\omega_3}|_{T_xM}=0  \quad  \forall
\, x \in M,$ then the following equations hold:
\begin{equation} \label{nabla J1,J2,J3}
 \widetilde\nabla_X{J_1}=0  \quad , \quad
\widetilde\nabla_X{J_2}=  \epsilon \omega(X)J_3 \quad, \quad
\widetilde\nabla_X{J_3}= \omega(X)J_2 \qquad \forall \, X \in TM
\end{equation}
where $\omega= \omega_1|_{TM}$ is a 1-form. We identify the normal bundle $T^\bot M$ with the tangent bundle
$TM$ using $J_2$ (note that $J_2^{-1}= J_2$):
$$
\begin{array}{cccc}
\varphi = {J_2}|_{T^\bot M}: & T_x^\bot M & \rightarrow &  T_xM\\
& \xi & \mapsto & J_2\xi\, .
\end{array}
$$
Then the second fundamental form $h$ of $M$ is identified with the
tensor field
$$
C=J_2\circ h  \in TM\otimes S^2T^*M
$$
and the normal connection $\nabla^\bot$ on $T^\bot M$ is
identified with a linear connection
\mbox{$\nabla^N=J_2\circ\nabla^\bot\circ J_2$} on $TM$.
We will call $C$ the \textbf{shape tensor} of the
$\epsilon$-K\"ahler sub\-ma\-ni\-fold $M$.
Note that $C$ depends on the adapted basis $(J_\alpha)$ and it is
defined only locally. We recall (see \cite{MV1}) that the 3-dimensional vector space
$Q_x \subset End(T_x\widetilde M)$ has a natural pseudo-Euclidean norm defined by $L^2= -||L||^2 Id, \; L \in Q$.
W.r.t. the adapted basis above,  if  $L= aJ_1 + bJ_2+ cJ_3 \in Q$ then  $||L||^2=a^2 -b^2 -c^2$ if $\epsilon =-1$  and $||L||^2= -a^2
-b^2 +c^2$ if $\epsilon =1$. Then  if $(J_\alpha')$ is another adapted basis obtained by the
pseudo-orthogonal  transformation, represented in the base
$(J_1,J_2, J_3)$, by the following matrices $B_\epsilon \in
SO(2,1)$
\begin{equation}\label{matrix of change of adapted basis on Kaehler submanifold }
 B_{-1}=\left(
\begin{array}{ccc}
1 & 0 & 0 \\ 0 & cos \theta & -sin \theta \\ 0 & sin \theta & cos
\theta
\end{array}
\right), \qquad  B_1=\left(
\begin{array}{ccc}
1 & 0 & 0 \\ 0 & cosh \theta & sinh \theta \\ 0 & sinh \theta &
cosh \theta
\end{array}
\right)
\end{equation}
then the shape
tensor transforms as
$$
C \mapsto C' = J_2' \circ h=    \cos\theta C+ \sin\theta
\mathcal{J^\epsilon} \circ C \, \quad (\text{resp. } \;
C'=\cosh\theta C+
 \sinh\theta \mathcal{J^\epsilon} \circ C) \,  .
$$
In the following $(E_i), \; i= 1,\ldots, 2n$ will be an orthonormal basis of $T_xM$ and we will use the notation $\mu_i=<E_i,E_i>$.
\begin{lemma} \label{max Kaehler proprieties}
One has
\begin{itemize} \item[(1)] For any $X \in TM$
the endomorphism $C_X$ of $TM$ is symmetric  and
\par
 $C_X=-A^{\varphi^{-1}X} = - A^{{J_2}X}$
where $A^\xi$ is the shape operator, defined in {\rm (\ref{shape
operator})}
\par
(Note that $ C_{J_2\xi}=-A^\xi \quad , \forall \, \xi \in T^\bot
M) $.
\item[(2)] $\nabla^N_X = \nabla_X - \epsilon \omega(X)\mathcal{J^\epsilon}, \quad  \quad X \in TM$.
\item[(3)] The curvature of the connection $\nabla^N$ is given by
\[\qquad \qquad R^N_{XY}=R_{XY} - \epsilon d\omega(X,Y)\mathcal{J^\epsilon}.\]
\item[(4)]  $\{ C_X , \mathcal{J^\epsilon}\} = C_X \circ \mathcal{J^\epsilon} + \mathcal{J^\epsilon} \circ C_X = 0$ and hence
$tr \, C = \sum_{2n} \mu_i C_{E_i}E_i = 0$.
\item[(5)] The tensors $gC$ and $gC\circ \mathcal{J^\epsilon}$ defined by
\[gC(X,Y,Z) = g(C_XY,Z), \qquad   \qquad (gC\circ \mathcal{J^\epsilon})(X,Y,Z) = gC(\mathcal{J^\epsilon}X,Y,Z)\]
are symmetric, i.e. both $gC$ and $gC\circ \mathcal{J^\epsilon} \in
S^3T^*M$.
\end{itemize}
\end{lemma}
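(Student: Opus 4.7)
The plan is to establish the five items sequentially, exploiting the three structural equations (\ref{nabla J1,J2,J3}) together with two key facts that hold because $M$ is totally $\epsilon$-complex of maximal dimension: $J_2 T_xM = T_x^\perp M$ (by dimension count, since $J_2 T_xM \perp T_xM$ and both have dimension $2n$), and $J_3 T_xM \subset T_x^\perp M$ (since $J_1$ preserves $T_x^\perp M$ as a skew-symmetric endomorphism stabilizing $T_xM$, so $J_3 T_xM = J_1(J_2 T_xM) = J_1 T_x^\perp M = T_x^\perp M$).

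For (1), I would expand
\[
\widetilde\nabla_Y(J_2 X) = (\widetilde\nabla_Y J_2)X + J_2\widetilde\nabla_Y X = \epsilon\omega(Y) J_3 X + J_2\nabla_Y X + J_2 h(Y,X),
\]
and observe that the first two summands are normal while $J_2 h(Y,X) \in J_2 T^\perp M = TM$ is tangential. Comparing with the Weingarten formula (\ref{shape operator}) identifies the tangential part with $-A^{J_2 X} Y$, yielding $C_X Y = J_2 h(X,Y) = -A^{J_2 X}Y$; symmetry of $C_X$ is then inherited from that of $A^\xi$. Reading off the normal part gives $\nabla^\perp_X(J_2 Y) = \epsilon\omega(X) J_3 Y + J_2\nabla_X Y$, and applying $J_2$ together with $J_2 J_3 = -J_1 = -\mathcal{J^\epsilon}$ (valid in both $\epsilon = \pm 1$ from the conventions recorded after (\ref{nabla J})) yields (2). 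Then (3) is an immediate computation of $[\nabla^N_X,\nabla^N_Y] - \nabla^N_{[X,Y]}$ using the K\"ahler condition $\nabla\mathcal{J^\epsilon}=0$: the mixed $\omega(\cdot)\,\mathcal{J^\epsilon}\nabla_{(\cdot)}$ terms cancel under antisymmetrization in $X,Y$, the quadratic $\omega(X)\omega(Y)\mathcal{J^\epsilon}^2$ contribution is symmetric and drops out, leaving $R_{XY} - \epsilon\, d\omega(X,Y)\mathcal{J^\epsilon}$.

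For (4), I would invoke Theorem \ref{second fundamental form of a Kaehler submanifold}(a) to write
\[
C_X \mathcal{J^\epsilon} Y = J_2 h(X,\mathcal{J^\epsilon} Y) = J_2 J_1 h(X,Y) = -J_1 J_2 h(X,Y) = -\mathcal{J^\epsilon} C_X Y,
\]
which is the anticommutation. The trace vanishes most cleanly via Corollary \ref{Kaehler and para-Kaehler submanifolds are minimal}: minimality of $M$ gives $\sum_i \mu_i h(E_i,E_i)=0$, and applying the invertible $J_2$ delivers $\sum_i \mu_i C_{E_i}E_i=0$. For (5), symmetry of $gC$ in its first two slots is just symmetry of $h$, and in the last two slots is the self-adjointness of the endomorphism $C_X = -A^{J_2 X}$ established in (1). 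For $gC\circ\mathcal{J^\epsilon}$, the identity $C_{\mathcal{J^\epsilon} X} = C_X\circ \mathcal{J^\epsilon}$ (which is the same calculation as in (4), reading the product $J_2 J_1 = -J_1 J_2$ in the opposite direction) lets one transfer an $\mathcal{J^\epsilon}$ between the first and second slots; combining this with the already-established full symmetry of $gC$ delivers the symmetry of $gC\circ\mathcal{J^\epsilon}$ in all three arguments. No serious obstacle arises; the only care needed is in keeping the signs of the products $J_\alpha J_\beta$ consistent across the complex and para-complex cases, which the paper's conventions after (\ref{nabla J}) already fix.
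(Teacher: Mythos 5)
Your proof is correct and follows essentially the same route as the paper: the structure equations (\ref{nabla J1,J2,J3}) and the identification $\varphi=J_2$ drive items (1)--(3), Theorem \ref{second fundamental form of a Kaehler submanifold}(a) gives the anticommutation in (4), and (5) reduces to symmetry of $h$ plus moving $\mathcal{J^\epsilon}$ between slots. The only (harmless) deviations are that you obtain $C_X=-A^{J_2X}$ by splitting $\widetilde\nabla_Y(J_2X)$ via the Gauss--Weingarten formulas where the paper uses a direct inner-product computation, and you deduce $\operatorname{tr}C=0$ from the minimality of $\epsilon$-K\"ahler submanifolds (Corollary \ref{Kaehler and para-Kaehler submanifolds are minimal}) where the paper derives it self-containedly from $C_X=-\mathcal{J^\epsilon}\circ C_X\circ(\mathcal{J^\epsilon})^{-1}$.
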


\begin{proof} (1) Using (\ref{decomposition of tangent space}) and (\ref{nabla J1,J2,J3}), for any $X,Y,Z \in TM$
one has
$$
\begin{array}{ll} \langle C_XZ,Y\rangle &=  \langle J_2 \circ h(X,Z),Y\rangle= -\langle h(X,Z),J_2Y\rangle
= -\langle \widetilde \nabla_X(Z),J_2 Y\rangle  \\
& = \langle \widetilde \nabla_X(J_2 Y),Z\rangle=  \langle
 (\widetilde\nabla_X J_2) Y + J_2 \widetilde \nabla_XY,Z\rangle\\
& = \langle  \epsilon \omega(X) J_3 Y + J_2 \widetilde
\nabla_XY,Z\rangle=\langle J_2\widetilde\nabla_XY,Z\rangle\\
& = -\langle \widetilde\nabla_XY,J_2Z\rangle
 = -\langle h(X,Y),J_2 Z\rangle = \langle C_XY,Z\rangle \, .
\end{array}
$$
Moreover, for any $X,Y,Z \in TM$,
\[
\langle -A^{J_2X}Y,Z \rangle =-\langle h(Y,Z),J_2X\rangle =
\langle J_2h(Y,Z),X\rangle
= \langle C_YZ,X \rangle=\langle Z, C_XY\rangle.
\]
This implies that $C_{X}=-A^{J_2X}$.

(2) Denoting by $[\;]^\perp$ the projection on $T^\perp M$ of a
vector in $\widetilde {TM}$, we have
$$
\begin{array}{ll}
\nabla_X^NY & = J_2 \nabla_X ^\perp (J_2Y)= J_2 [\widetilde
\nabla_X (J_2Y)]^\perp= J_2 [(\widetilde \nabla_X J_2)Y+ J_2
\widetilde \nabla_X Y]^\perp\\
&= J_2 [( \epsilon \omega(X) J_3Y + J_2(  \nabla_X Y+
h(X,Y))]^\perp= - \epsilon \omega(X)JY + \nabla_XY.
\end{array}
$$

$$ \begin{array}{lll}
(3)\; R^N_{XY}Z &=&[\nabla_X - \epsilon
\omega(X)\mathcal{J^\epsilon},
\nabla_Y- \epsilon \omega(Y)\mathcal{J^\epsilon}](Z)-\nabla_{[X,Y]}Z + \epsilon \omega([X,Y])\mathcal{J^\epsilon}Z \\
&=&R_{XY}Z +\nabla_X[ -\epsilon \omega(Y) \mathcal{J^\epsilon}Z]

-\epsilon \omega(X)\mathcal{J^\epsilon} \nabla_YZ + \omega(X)
\omega(Y) {\mathcal{J^\epsilon}}^2Z +  \\  & &  \epsilon
\nabla_Y[\omega(X) \mathcal{J^\epsilon}Z] + \epsilon
\omega(Y)\mathcal{J^\epsilon}
\nabla_X Z - \omega(X) \omega(Y) {\mathcal{J^\epsilon}}^2Z + \epsilon \omega([X,Y])\mathcal{J^\epsilon}Z\\


 &=&R_{XY} Z - \epsilon \{X\cdot\omega(Y)-Y\cdot\omega(X)-\omega([X,Y])\}\mathcal{J^\epsilon}Z\\

 &=& R_{XY}Z -\epsilon d\omega(X,Y)\mathcal{J^\epsilon}Z. \end{array}
$$

(4) By using (\ref{2 fundamental form in Kaehler submanifold}) we
get
$$
C_X (\mathcal{J^\epsilon} Y) = J_2h(X,\mathcal{J^\epsilon}Y) =
J_2J^\epsilon h(X,Y) = -\mathcal{J^\epsilon} C_XY\, .
$$
Since  $C_X = -\mathcal{J^\epsilon} \circ C_X \circ
\mathcal{J^\epsilon}^{-1}$, then $tr \, C_X=0 \; \; \forall X \in
TM$, which implies $tr \, C=0$.

(5) The first statement follows from (1) and the symmetry of
$h$. Using (4) we prove the second one:
\[
\begin{array}{ll}
(gC\circ \mathcal{J^\epsilon})(X,Y,Z)&=gC(\mathcal{J^\epsilon}X,Y,Z) =
\langle C_{\mathcal{J^\epsilon} X}Y,Z\rangle = \langle C_Y(\mathcal{J^\epsilon} X),Z\rangle \\

& = - \langle \mathcal{J^\epsilon} C_YX,Z \rangle = \langle
C_YX,\mathcal{J^\epsilon} Z \rangle = \langle C_Y
(\mathcal{J^\epsilon}Z),X\rangle\\

&= \langle C_{\mathcal{J^\epsilon} Z}Y,X\rangle=(gC\circ \mathcal{J^\epsilon})(Z,Y,X).
\end{array}
\]
Moreover from (1) it is $(gC\circ \mathcal{J^\epsilon})(X,Y,Z)=
(gC\circ \mathcal{J^\epsilon})(X,Z,Y)$.
\end{proof}

We denote by $\nabla'$ the linear connection in a tensor bundle
which is a tensor product of a tangent tensor bundle of $M$ and a
normal tensor bundle whose con\-nec\-tions are respectively $\nabla$
and $\nabla^\bot$. For example, if $k$ is a section of the bundle
$T^\bot M\otimes S^2T^*M$ then $(\nabla'_Xk)(Y,Z)= \nabla_X^\bot(k(Y,Z))-k(\nabla_XY,Z) -
k(Y,\nabla_XZ)$.
By using (2) of Lemma (\ref{max Kaehler proprieties}), we get
\[
\begin{array}{ll}
J_2(\nabla'_Xh)(Y,Z) &=J_2\{\nabla_X^\perp[h(Y,Z)]-h(\nabla_XY,Z)-h(Y,\nabla_XZ\}=\\

&=\nabla_X^N[J_2 h(Y,Z)]-C_{\nabla_XY}Z-C_Y \nabla_XZ\\

&= (\nabla_X^NC)_Y Z + C_{\nabla_X^N Y}Z +C_Y \nabla_X^N Z
-C_{\nabla_X Y}Z-C_Y \nabla_X Z\\

\end{array}
\]
hence for the covariant
derivative of the second fundamental form we have:
\begin{equation}\label{derivative of second fundamental form in Kaehler submanifolds}
J_2(\nabla'_Xh)(Y,Z) =(\nabla_X^N C)_Y Z + 2 \epsilon \omega(X)
\mathcal{J^\epsilon}C_Y Z =(\nabla_XC)_YZ + \epsilon \omega(X) \mathcal{J^\epsilon} C_Y Z.
\end{equation}

Denote by
$S_\mathcal{J^\epsilon} = \{A \in \text{\rm End} TM,
\{A,\mathcal{J^\epsilon}\} = 0, \; g(AX,Y) = g(X,AY) \}
$ the bundle of symmetric endomorphisms of $TM$, which anticommutes
with $J$ and by $S^{(1)}_\mathcal{J^\epsilon} = \{ A \in \text{\rm Hom}(TM,
S_\mathcal{J^\epsilon})=T^*M\otimes S_\mathcal{J^\epsilon}, \;
A_XY = A_YX \}$ its first prolongation. Then conditions (4), (5) can be reformulated as follows.
\begin{coro} \label{derivative of C}
The tensor $C = J_2h$ belongs to the space
$S^{(1)}_\mathcal{J^\epsilon}$ and its covariant derivative is
given by
$$
\nabla_XC=J_2\nabla'_Xh - \epsilon \omega(X) \mathcal{J^\epsilon} \circ C \, .
$$
\end{coro}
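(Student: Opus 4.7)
The corollary is essentially a bookkeeping consequence of Lemma \ref{max Kaehler proprieties} and of the identity (\ref{derivative of second fundamental form in Kaehler submanifolds}) derived immediately above, so my plan is simply to package those two ingredients rather than to do any new computation.

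First I would establish membership $C\in S^{(1)}_{\mathcal{J}^\epsilon}$. By definition one must check three things: that each $C_X$ is a symmetric endomorphism of $TM$, that each $C_X$ anticommutes with $\mathcal{J}^\epsilon$, and that the assignment $X\mapsto C_X$ is symmetric in the sense $C_XY=C_YX$. The symmetry of $C_X$ is exactly item (1) of Lemma \ref{max Kaehler proprieties}; the anticommutation $\{C_X,\mathcal{J}^\epsilon\}=0$ is exactly item (4); and the symmetry $C_XY=C_YX$ is a direct consequence of item (5), since $gC\in S^3T^*M$ gives $g(C_XY,Z)=g(C_YX,Z)$ for all $Z$ and hence $C_XY=C_YX$ by non-degeneracy of $g$. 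Putting these three together yields $C\in T^*M\otimes S_{\mathcal{J}^\epsilon}$ with the first-prolongation symmetry, i.e. $C\in S^{(1)}_{\mathcal{J}^\epsilon}$.

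Second, I would obtain the formula for $\nabla_X C$ by reading off equation (\ref{derivative of second fundamental form in Kaehler submanifolds}). That identity asserts
\[
J_2(\nabla'_X h)(Y,Z)=(\nabla_X C)_YZ+\epsilon\,\omega(X)\,\mathcal{J}^\epsilon\,C_YZ,
\]
and solving for $(\nabla_X C)_YZ$ gives
\[
(\nabla_X C)_YZ=J_2(\nabla'_X h)(Y,Z)-\epsilon\,\omega(X)\,\mathcal{J}^\epsilon\,C_YZ,
\]
which is exactly the claimed identity $\nabla_X C=J_2\nabla'_X h-\epsilon\,\omega(X)\,\mathcal{J}^\epsilon\circ C$ viewed as an equality of sections of $T^*M\otimes T^*M\otimes TM$.

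There is really no obstacle here: the corollary is a restatement. The only point worth a line of comment is that the endomorphism $\mathcal{J}^\epsilon\circ C$ indeed lies in $S^{(1)}_{\mathcal{J}^\epsilon}$ as well, so the decomposition respects the bundle $S^{(1)}_{\mathcal{J}^\epsilon}$ to which $C$ belongs; this follows from (4) of the lemma since $\mathcal{J}^\epsilon$ anticommutes with each $C_X$, ensuring $\mathcal{J}^\epsilon C_X$ is again symmetric and anticommutes with $\mathcal{J}^\epsilon$.
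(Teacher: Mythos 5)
Your proposal is correct and follows exactly the paper's (essentially implicit) argument: the membership $C\in S^{(1)}_{\mathcal{J}^\epsilon}$ is read off from items (1), (4), (5) of Lemma \ref{max Kaehler proprieties} together with the symmetry of $h$, and the formula for $\nabla_XC$ is just the rearrangement of equation (\ref{derivative of second fundamental form in Kaehler submanifolds}). Nothing is missing; the closing remark that $\mathcal{J}^\epsilon\circ C$ again lies in $S^{(1)}_{\mathcal{J}^\epsilon}$ is a correct (if optional) consistency check.
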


\subsection{Gauss-Codazzi-Ricci equations}

Let $M$ be a submanifold of a pseudo-Riemannian ma\-ni\-fold $\widetilde M$
and $
\widetilde{R}_{XY}=R^{TT}_{XY} + R^{\perp T}_{XY} +
R^{T\perp}_{XY} + R^{\perp\perp}_{XY }$ the decomposition of the curvature operator $\widetilde R_{XY},
X,Y \in T_xM$ of the manifold $\widetilde M$ according to the
decomposition
$$ \text{\rm End} (T_x\widetilde M) = \text{\rm End}(T_xM)
+ \text{\rm Hom} (T_xM,T_x ^{\perp} M) + \text{\rm
Hom}(T_x^{\perp} M, T_xM) + \text{\rm End}(T_x^{\perp} M).
$$
Using (\ref{shape operator}) and calculating the curvature
operator $\widetilde R_{XY}=[\widetilde\nabla_X,
\widetilde\nabla_Y]-\widetilde\nabla_{[X,Y]}$ of the connection
$\widetilde \nabla$, we get the following \textbf{Gauss-Codazzi-Ricci
equations}:
\[
\begin{array}{llll}
\text{(Gauss)} &   R^{\top\top}_{XY} &= R_{XY} -A_XA^t_Y + A_YA^t_X &  (\top\top)   \\
         & & = R_{XY} -\sum_i \kappa_i  A^{\xi_i}X\wedge A^{\xi_i}Y & \quad\\

\text{(Codazzi 1)} & R^{\bot\top}_{XY} & = h_X \nabla_Y -h_Y
\nabla_X + \nabla_X^\perp h_Y
-\nabla_Y^\perp h_X - h_{[X,Y]}&  (\bot\top)\\
& R^{\bot\top}_{XY}Z & =(\nabla'_Xh)(Y,Z)-(\nabla'_Yh)(X,Z) \\

\text{(Codazzi 2)} & R^{\top\bot}_{XY} & = -h_X^t
\nabla_Y^\perp  -\nabla_X h_Y^t + h_Y^t \nabla_X^\perp +\nabla_Y h_X^t + h_{[X,Y]}^t & (\top\bot)\\

 & R^{\top\bot}_{XY}\eta & = -(\nabla_X A^\eta - A^{\nabla^\bot_X\eta})Y
 + (\nabla_YA^\eta-A^{\nabla^\bot_Y\eta})X \\

& &=(\nabla'_X h^t)(Y,\eta)-(\nabla'_Y h^t)(X,\eta) \\

 \text{(Ricci)} &
R^{\bot\bot}_{XY} & = R^\bot_{XY} -h_X \circ h_Y^t +h_Y
\circ  h_X^t &  (\bot\bot)\\

 & &= R^{\perp}(X,Y)
 - \sum_{a,b} \kappa_a \kappa_b <A^{\xi_a}X,A^{\xi_b}Y> \xi_a  \wedge \xi_b\\

& &= R^{\perp}(X,Y)
 - \sum_{a,b} \kappa_a \kappa_b <[A^{\xi_a},A^{\xi_b}]X,Y> \xi_a
\wedge \xi_b\\

& R^{\bot\bot}_{XY}\eta &= R^\bot_{XY}\eta-\sum_i \kappa_i \langle
X,[A^{\xi_i},A^\eta]Y\rangle \xi_i

\end{array}
\]
where $\xi_i$ is an orthonormal basis of $T^\bot M$ and  $\kappa_i=<\xi_i,\xi_i>$,  $X,Y \in TM$,
$\eta \in T^\bot M$, $R$, $R^\bot$ are the curvature tensors of
the connections $\nabla$ and $\nabla^\bot$. We identify a bivector
$X\wedge Y$ with the skew-symmetric operator $Z\mapsto \langle
Y,Z\rangle X-\langle X,Z\rangle Y$ and  denote by \mbox{$h_X^t : T^\perp M \rightarrow TM$}
the adjoint operator of \mbox{$h_X=h(X,\cdot) :TM\rightarrow T^\perp M$}.
 We recall that shape ope\-ra\-tor and second fundamental form
satisfy \mbox{$A^\eta X=h_X^t\eta$}, $\;(AX=h_X^t)$ or, equivalently,
$ <A^\eta X,Y>=<h(X,Y),\eta>$.


\begin{defi} Let $M$ be a submanifold $M$ of a
Riemannian manifold $\widetilde M$. Then
\begin{itemize}
\item[(1)] $M$ is called \textbf{curvature invariant} if
$ \widetilde R_{XY}Z \in TM \quad , \quad \forall \, X,Y,Z \in TM ,
$\\
or equivalently, $
R^{\top\bot} = R^{\bot \top} = 0.$
\item[(2)] $M$ is called \textbf{strongly curvature invariant}
if it is curvature invariant and moreover $\widetilde R_{\xi\eta}\zeta \in T^\bot M \quad , \quad \forall \,
\xi,\eta,\zeta \in T^\bot M$.
\item[(3)]
$M$ is called \textbf{parallel} if the second fundamental form is
parallel: \mbox{$\nabla'h=0$}.
\end{itemize}
\end{defi}

Let us recall the following known result.
\begin{prop}
 A parallel submanifold $M$ of a locally symmetric manifold $\widetilde M$ is
curvature invariant and locally symmetric.
\end{prop}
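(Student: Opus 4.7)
The plan is first to derive curvature invariance and then to use it in the Gauss equation to obtain $\nabla R=0$. For curvature invariance, the first Codazzi equation gives $R^{\bot\top}_{XY}Z=(\nabla'_Xh)(Y,Z)-(\nabla'_Yh)(X,Z)=0$ since $\nabla'h=0$, and $R^{\top\bot}=0$ follows from the pair symmetry $\langle\widetilde R_{XY}Z,\eta\rangle=-\langle\widetilde R_{XY}\eta,Z\rangle$ of the Riemann tensor. Consequently $\widetilde R_{XY}Z\in TM$ for all $X,Y,Z\in TM$, and dually $\widetilde R_{XY}\eta\in T^{\bot}M$ for $\eta\in T^{\bot}M$; the first is the desired curvature invariance.

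For local symmetry I would exploit the Gauss equation, which upon curvature invariance simplifies to $R_{XY}Z=\widetilde R_{XY}Z+A^{h(Y,Z)}X-A^{h(X,Z)}Y$. Fix $p\in M$ and extend $X,Y,Z,W\in T_pM$ to local vector fields whose $\nabla$-covariant derivatives vanish at $p$. Because $\nabla'h=0$, at $p$ one also has $\nabla_W^{\bot}h(Y,Z)=\nabla_W^{\bot}h(X,Z)=0$, and since $\langle A^\xi X,Y\rangle=\langle h(X,Y),\xi\rangle$ the shape operator is $\nabla'$-parallel as well; hence the two $A$-terms contribute $0$ to $(\nabla_W R)_{XY}Z$ at $p$.

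It remains to show $\nabla_W(\widetilde R_{XY}Z)=0$ at $p$, i.e.\ that the tangential part of $\widetilde\nabla_W(\widetilde R_{XY}Z)$ vanishes. Using $\widetilde\nabla\widetilde R=0$ (local symmetry of $\widetilde M$) together with $\widetilde\nabla_WX=h(W,X)$ etc., one gets
$\widetilde\nabla_W(\widetilde R_{XY}Z)=\widetilde R(h(W,X),Y)Z+\widetilde R(X,h(W,Y))Z+\widetilde R(X,Y)h(W,Z)$.
The last summand lies in $T^{\bot}M$ because $R^{\top\bot}=0$; for the first two the pair-exchange symmetry $\langle\widetilde R(h(W,X),Y)Z,V\rangle=\langle\widetilde R(Z,V)h(W,X),Y\rangle$ combined with curvature invariance (which yields $\widetilde R(Z,V)T^{\bot}M\subset T^{\bot}M$) forces the tangential projection to vanish. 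Hence $(\nabla_W R)_{XY}Z=0$, and $M$ is locally symmetric.

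The main obstacle I expect is the tangential/normal bookkeeping in the last step: each of the three summands of $\widetilde\nabla_W(\widetilde R_{XY}Z)$ must be shown to project trivially to $TM$, which requires invoking curvature invariance twice -- once directly on the $h(W,Z)$ term and once through the pair-exchange symmetry of $\widetilde R$ on the other two.
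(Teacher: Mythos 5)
Your proof is correct and follows essentially the same route as the paper: curvature invariance from the first Codazzi equation with $\nabla'h=0$, and then the Gauss equation combined with $\widetilde\nabla\widetilde R=0$ to conclude $\nabla R=0$. The only difference is that you spell out the tangential/normal bookkeeping behind the paper's one-line assertion $0=\widetilde\nabla\widetilde R=\nabla R^{TT}$, which is a welcome amplification rather than a change of method.
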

\begin{proof}First statement follows from ($\bot\top$).
For the second statement observe that $\widetilde R |_{T_xM}=
R^{TT} + R^{\perp T}= R^{TT}$. Then $0= \widetilde \nabla \widetilde R=
\nabla R^{TT}$. It follows
\[0=\nabla (R^{TT})(X,Y)=(\nabla R)_{XY} - \nabla (h_X^t
\circ h_Y) +\nabla(h_Y^t \circ h_X) \quad \forall X,Y \in TM \]
which implies $\nabla R=0$.
\end{proof}

\subsection{Gauss-Codazzi-Ricci  equations for a $\epsilon$-K\"ahler
submanifold}

By spe\-ci\-fying the previous formulas to a totally
$\epsilon$-complex submanifold  and using Lemma (\ref{max Kaehler proprieties})
and (\ref{derivative of second fundamental form in Kaehler
submanifolds}) we get the following

\begin{prop} \label{G.C. equations for max Kaehler submanifold in pqK}
The Gauss-Codazzi-Ricci equations for a maximal totally
$\epsilon$-complex submanifold $(M^{2n},\mathcal{J^\epsilon},g)$
of a \pqK manifold $(\widetilde M^{4n},Q,\widetilde g)$ can
be written as
\begin{itemize}
\item[(1)]  $R^{TT}_{XY} = R_{XY} + [C_X, C_Y]$
\item[(2)]  $J_2R^{\bot\bot}_{XY}J_2 = R^N_{XY} + [C_X, C_Y]
 = R_{XY} + [C_X, C_Y] -\epsilon d\omega(X,Y)\mathcal{J^\epsilon}$
\item[(3)] $J_2 R^{\bot T}_{XY} = P_{XY} - P_{YX}$
\end{itemize}
where $(J_\alpha)$ is an adapted basis of
$(M^{2n},\mathcal{J^\epsilon})$, $C=J_2h$ is the shape tensor and
$P_{XY} := (\nabla_XC)_Y +\epsilon \omega(X)\mathcal{J^\epsilon}
\circ C_Y\, \in S_{\mathcal J^\epsilon}$.

\end{prop}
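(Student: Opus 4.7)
The plan is to specialize the three general Gauss-Codazzi-Ricci equations displayed just above in Section 5.2 to the totally $\epsilon$-complex setting. All the translation work has already been done in Lemma \ref{max Kaehler proprieties} and in equation (\ref{derivative of second fundamental form in Kaehler submanifolds}); the two facts I will reuse repeatedly are the identifications $h(X,Y)=J_2C_XY$ and $A^{J_2V}=-C_V$ (part (1) of the Lemma), together with the symmetry $C_XY=C_YX$ coming from $C\in S^{(1)}_{\mathcal J^\epsilon}$.

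For (1), substituting these identifications into the Gauss equation, a one-line chain reduces $A_XA^t_YZ$ to $-C_X(C_YZ)$: namely, $A_XA^t_YZ=A^{h(Y,Z)}X=A^{J_2C_YZ}X=-C_{C_YZ}X=-C_X(C_YZ)$, the decisive last step being the symmetry of $C$. The commutator $[C_X,C_Y]$ then appears with the correct sign. For (2), the transported normal connection $\nabla^N=J_2\circ\nabla^\perp\circ J_2$ gives $J_2R^\perp_{XY}J_2=R^N_{XY}$ at once, and the same trick applied to the Ricci shape term produces $J_2(h_Xh_Y^t)J_2=-C_X\circ C_Y$. Combining these yields the first equality in (2); the second is substitution of the formula $R^N_{XY}=R_{XY}-\epsilon\, d\omega(X,Y)\mathcal{J}^\epsilon$ from part (3) of the Lemma.

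Equation (3) is almost a tautology: by (\ref{derivative of second fundamental form in Kaehler submanifolds}), $J_2(\nabla'_Xh)(Y,Z)=P_{XY}Z$, so applying $J_2$ to the Codazzi equation $R^{\perp T}_{XY}Z=(\nabla'_Xh)(Y,Z)-(\nabla'_Yh)(X,Z)$ gives (3) at once. The claim $P_{XY}\in S_{\mathcal J^\epsilon}$ requires a brief check: $(\nabla_XC)_Y$ is symmetric because $C_Y$ is and $\nabla g=0$, and it anticommutes with $\mathcal{J}^\epsilon$ by differentiating $\{C_Y,\mathcal{J}^\epsilon\}=0$ using $\nabla\mathcal{J}^\epsilon=0$ (valid since $M$ is $\epsilon$-K\"ahler); the summand $\omega(X)\mathcal{J}^\epsilon\circ C_Y$ inherits both properties from $C_Y$ because $\mathcal{J}^\epsilon$ is $g$-skew-adjoint and anticommutes with $C_Y$.

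The main obstacle is purely bookkeeping: making sure that the signs produced by $A^\xi=-C_{J_2\xi}$, $h=J_2\circ C$, and $J_2^2=\operatorname{Id}$ combine with the symmetry $C_XY=C_YX$ to yield the stated commutators rather than anticommutators or the opposite sign. There is no new geometric content beyond Lemma \ref{max Kaehler proprieties} and formula (\ref{derivative of second fundamental form in Kaehler submanifolds}).
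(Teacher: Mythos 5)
Your proposal is correct and follows essentially the same route as the paper: specialize the general Gauss--Codazzi--Ricci equations using $C=J_2\circ h$, $A^{J_2V}=-C_V$, the symmetry $C_XY=C_YX$, and formula (\ref{derivative of second fundamental form in Kaehler submanifolds}), with the signs working out exactly as you describe. The only (harmless) divergence is in verifying $P_{XY}\in S_{\mathcal J^\epsilon}$, where you differentiate the algebraic identities $\{C_Y,\mathcal J^\epsilon\}=0$ and $g(C_Y\cdot,\cdot)=g(\cdot,C_Y\cdot)$ using $\nabla\mathcal J^\epsilon=0$ and $\nabla g=0$, whereas the paper performs a direct inner-product computation via $h(Y,\mathcal J^\epsilon Z)=J^\epsilon h(Y,Z)$; both are valid.
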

\begin{proof}
We prove the first two equations since the third comes directly
from (\ref{derivative of second fundamental form in Kaehler
submanifolds})
$$
\begin{array}{ll}
R^{TT}_{XY} &=  R_{XY} -A_XA^t_Y + A_YA^t_X =   R_{XY}
+C_{J_2A_Y^t}X -C_{J_2A_X^t}Y\\
&=R_{XY} +C_{J_2h_Y}X -C_{J_2h_X}Y= R_{XY}
+C_{C_Y}X -C_{C_X}Y = R_{XY} + [C_X, C_Y]\\
\end{array}
$$
$$
\begin{array}{ll}
 J_2R^{\bot\bot}_{XY}J^{-1}_2 & = J_2(\nabla_X^\perp
J_2^2
 \nabla_Y^\perp) J_2 - J_2(\nabla_Y^\perp J_2^2
 \nabla_X^\perp) J_2 - J_2 \nabla_{[X,Y]}^\perp J_2\\
&  + J_2 h_Y  A_X J_2 -   J_2 h_X
 A_Y J_2= R_{XY}^N -J_2 h_Y C_X + J_2 h_X C_Y\\
& =R_{XY}^N -C_Y C_X +C_X C_Y=  R^N_{XY} + [C_X, C_Y]
\end{array}
$$

Now we prove that  $P_XY \in S_{\mathcal J^\epsilon}$.
Let $X,Y,Z,T \in T_xM$. From (\ref{derivative of second fundamental form in Kaehler submanifolds}), $P_{XY}= J_2(\nabla_X' h)_Y$ and computing
\[
\begin{array}{ll}
<P_{XY}  \mathcal J^\epsilon Z,T>  =  <J_2(\nabla_X' h)_Y \mathcal J^\epsilon  Z,T>\\
=<J_2  \widetilde \nabla_X [h(Y,\mathcal J^\epsilon Z),T>
-<J_2  h(\nabla_X Y,\mathcal J^\epsilon Z),T>
-<J_2  h( Y, \nabla_X (\mathcal J^\epsilon Z)),T>\\
=<J_2 J_1 \nabla^\perp_X [h(Y,Z)],T> -<J_2 J_1  [h(\nabla_X Y,Z)],T> -<J_2 J_1  [h( Y,\nabla_XZ)],T>\\
= -<P_{XY}Z,\mathcal J^\epsilon T>
\end{array}
\]
Being  $-R^{T\bot}_{XY}$ the adjoint of $R^{\bot
T}_{XY}$, the operator $R^{T\bot}_{XY}J_2$ is the adjoint of $J_2R^{\bot
T}_{XY}$.
\end{proof}

\begin{coro} \label{Ricci tensor for max Kaehler submanifold in pqK}
The Ricci tensor $\text{\rm Ric}_M$ of the $\epsilon$-K\"ahler submanifold
$M^{2n}\subset \widetilde M^{4n}$ is given by
\[
\begin{array}{l}
\text{\rm Ric}_M =\text{\rm Ric}(R^{TT}) + \text{\rm
tr}_g\langle C_{.},C_{.}\rangle = \text{\rm Ric}(R^{TT})+\langle
\sum_i \mu_i C^2_{E_i}\cdot,\cdot\rangle\\
\text{or, more precisely,}\\
\text{\rm Ric}_M(X,Y) =\text{\rm
Ric}(R^{TT})(X,Y) + \sum_{i=1}^{2n}  \mu_i \langle
C_{E_i}X,C_{E_i}Y\rangle \qquad \qquad X,Y \in TM
\end{array}
\]
where $\text{\rm Ric}(R^{TT})$ is the Ricci tensor of the
tangential part $R^{TT}$ of $\widetilde R$, that is \newline
$\text{\rm Ric}(R^{TT})(X,Y)$ $=\text{\rm tr}(Z\mapsto
R^{TT}_{ZX}Y$).
\end{coro}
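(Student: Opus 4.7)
The plan is to derive the formula by contracting the Gauss equation from Proposition \emph{(G.C.\ equations for max K\"ahler submanifold in pqK)}, namely
\[
R^{TT}_{XY} = R_{XY} + [C_X, C_Y],
\]
which rewrites as $R_{XY} = R^{TT}_{XY} - [C_X, C_Y]$. Fixing at the point $x$ an orthonormal basis $(E_i)$ with $\mu_i = \langle E_i, E_i\rangle$ and using the definition $\text{Ric}_M(X,Y) = \sum_i \mu_i \langle R_{E_i X}Y, E_i\rangle$, this gives immediately
\[
\text{Ric}_M(X,Y) = \text{Ric}(R^{TT})(X,Y) - \sum_{i=1}^{2n} \mu_i \langle [C_{E_i}, C_X]Y, E_i\rangle.
\]
So the whole task is to show that the commutator trace equals $-\sum_i \mu_i \langle C_{E_i}X, C_{E_i}Y\rangle$.

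I would split $\sum_i \mu_i \langle [C_{E_i}, C_X]Y, E_i\rangle$ into two pieces. For the first piece, $\sum_i \mu_i \langle C_{E_i}(C_XY), E_i\rangle$, symmetry of $C_{E_i}$ (Lemma \emph{(max Kaehler proprieties)}, part (1) combined with (5)) gives $\langle C_X Y, \sum_i \mu_i C_{E_i}E_i\rangle$, and this vanishes because $\text{tr}\, C = \sum_i \mu_i C_{E_i}E_i = 0$ by part (4) of that Lemma.

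For the second piece, $\sum_i \mu_i \langle C_X(C_{E_i}Y), E_i\rangle$, I would first use symmetry of $C_X$ to rewrite it as $\sum_i \mu_i \langle C_{E_i}Y, C_X E_i\rangle$. Then the key step is applying the total symmetry of $gC$ (part (5) of the Lemma, equivalently $C \in S^{(1)}_{\mathcal{J}^\epsilon}$), which yields $C_X E_i = C_{E_i} X$; this turns the sum into $\sum_i \mu_i \langle C_{E_i}X, C_{E_i}Y\rangle$. Combining the two pieces gives the formula stated in the Corollary.

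The calculation is short and the only place where any real content enters is the symmetry $C_X E_i = C_{E_i} X$, which forces the remaining trace into the positive-definite-looking form $\sum_i \mu_i \langle C_{E_i}X, C_{E_i}Y\rangle$. The main thing to watch is the sign bookkeeping (the minus sign coming from $R_{XY} = R^{TT}_{XY} - [C_X, C_Y]$ combined with the minus sign from the commutator) to confirm that the correction term appears with a plus and not a minus.
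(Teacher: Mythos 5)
Your proposal is correct and follows essentially the same route as the paper: contract the Gauss equation $R^{TT}_{XY}=R_{XY}+[C_X,C_Y]$, kill the $\sum_i\mu_i\langle C_{E_i}C_XY,E_i\rangle$ term via $\operatorname{tr}C=0$, and convert the other term using the symmetry of $C_X$ together with $C_XE_i=C_{E_i}X$. The sign bookkeeping you flag does work out exactly as you describe, yielding the correction term with a plus sign.
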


\begin{proof}
\[
\begin{array}{ll}
Ric(X,Y)& =\sum_{i=1}^{2n} \mu_i \{g(R^{TT}(E_i,X)Y,E_i) -
g([C_{E_i},C_{X}]Y,E_i) \} \\ &= Ric(R^{TT})(X,Y) - \sum_{i=1} ^{2n}
\mu_i \{g(C_{E_i} C_X Y, E_i) - g(C_X C_{E_i} Y, E_i)\} \\ & =
Ric(R^{TT})(X,Y) - \sum_{i=1} ^{2n} \mu_i g(C_{E_i}E_i, C_X Y)
+\sum_{i=1} ^{2n} \mu_i g(C_{E_i} Y, C_{E_i} X)\\ &=Ric(R^{TT})(X,Y) +
\sum_{i=1} ^{2n} \mu_i g(C_{E_i} X, C_{E_i} Y)
\end{array}
\]

\end{proof}

\begin{prop} \label{CNS parallel and curvature invariant of a max Kaehler in pqK}
Let $M^{2n}$ be a $\epsilon$-K\"ahler submanifold of a \pqK
ma\-ni\-fold $\widetilde M^{4n}$. Then
\begin{itemize}
\item[(1)] $M^{2n}$ is parallel if and only if $P_{XY} :=
(\nabla_XC)_Y + \epsilon \omega(X) \mathcal{J^\epsilon}\circ
C_Y=0$ ;
\item[(2)] $M^{2n}$ is curvature
invariant if and only if the tensor $P_{XY}$ belongs to   $$
S^{(2)}_\mathcal{J^\epsilon} = \{ A \in Hom(TM,
S^{(1)}_\mathcal{J^\epsilon}) , A_{XY} = A_{YX} \}.
$$
Then $M^{2n}$ is strongly curvature invariant.
\end{itemize}
\end{prop}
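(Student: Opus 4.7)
The plan is to reduce both statements to the two identifications already in place: equation~\eqref{derivative of second fundamental form in Kaehler submanifolds}, $J_2(\nabla'_X h)(Y,Z) = P_{XY}Z$, and Proposition~\ref{G.C. equations for max Kaehler submanifold in pqK}(3), $J_2 R^{\bot T}_{XY} = P_{XY}-P_{YX}$. Part~(1) is then immediate: since $J_2\colon T^{\bot}M\to TM$ is a fiberwise isomorphism, $\nabla'h=0$ is equivalent to $P\equiv 0$.

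For Part~(2), I would first note that curvature invariance of $M$ means $R^{\bot T}=0$ (equivalently, via Riemannian pair-symmetry, $R^{T\bot}=0$), which by the proposition just quoted is equivalent to $P_{XY}=P_{YX}$ for all $X,Y$. To identify this with the condition $P\in S^{(2)}_{\mathcal{J^\epsilon}}$, I would check the three defining requirements separately. First, $P_{XY}\in S_{\mathcal{J^\epsilon}}$ is already contained in Proposition~\ref{G.C. equations for max Kaehler submanifold in pqK}. Second, the inner symmetry $P_{XY}Z=P_{XZ}Y$ is automatic: expanding
\[
(\nabla_X C)_Y Z = \nabla_X(C_Y Z) - C_{\nabla_X Y}Z - C_Y(\nabla_X Z)
\]
and using the symmetry $C_Y Z = C_Z Y$ from Lemma~\ref{max Kaehler proprieties}, each of the three summands is manifestly symmetric in $(Y,Z)$, while $\epsilon\omega(X)\mathcal{J^\epsilon}C_Y Z$ is trivially so. Third, the outer symmetry $P_{XY}=P_{YX}$ is precisely the curvature-invariance condition just extracted.

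For the final claim that curvature invariance already forces \emph{strong} curvature invariance, my plan is to propagate $\widetilde R(TM,TM)TM\subset TM$ across the isomorphism $J_2\colon TM\to T^{\bot}M$ using the $Q$-invariance of the \pqK curvature~\eqref{action of the curvature operator}. Writing a generic triple of normal vectors as $\xi=J_2 X$, $\eta=J_2 Y$, $\zeta=J_2 Z$ with $X,Y,Z\in TM$, I would use
\[
\widetilde R(\xi,\eta)J_2 Z = [\widetilde R(\xi,\eta),J_2] Z + J_2\,\widetilde R(\xi,\eta)Z.
\]
By~\eqref{action of the curvature operator}, the commutator reduces to a scalar multiple of $J_3$ (the $J_1$-component drops because $\widetilde g(J_1 X, J_2 Y)=0$), so it sends $Z\in TM$ into $J_3 TM=T^{\bot}M$. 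For the remaining term I would apply the first Bianchi identity to rewrite $\widetilde R(J_2 X, J_2 Y)Z$ as a combination of terms of the form $\widetilde R(J_2 Y, Z)J_2 X$, and then decompose each again via $\widetilde R(\cdot,\cdot)J_2 = [\widetilde R(\cdot,\cdot),J_2] + J_2\,\widetilde R(\cdot,\cdot)$; the Riemannian pair-symmetry of $\widetilde R$ combined with $R^{\bot T}=R^{T\bot}=0$ then forces the resulting pieces to lie in $TM$, so $J_2\widetilde R(\xi,\eta)Z\in T^{\bot}M$. The main obstacle is exactly this last step: strong curvature invariance does not follow purely algebraically from curvature invariance, but requires a careful bookkeeping of which $J_\alpha$-components of the $Q$-action survive under the tangent/normal decomposition at each iteration of~\eqref{action of the curvature operator}.
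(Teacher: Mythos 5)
Your treatment of (1) and of the equivalence in (2) is correct and follows the same route as the paper: (1) is read off from \eqref{derivative of second fundamental form in Kaehler submanifolds} because $J_2$ is a fiberwise isomorphism, and the identification of curvature invariance with $P\in S^{(2)}_{\mathcal{J^\epsilon}}$ comes from Proposition \ref{G.C. equations for max Kaehler submanifold in pqK}(3) together with the automatic symmetries of $P$. The paper dispatches these two steps in one line each; your explicit verification of the inner symmetry $P_{XY}Z=P_{XZ}Y$ (from the total symmetry of $gC$) is a detail the paper leaves implicit, and it is right.

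The only genuine issue is the one you flag yourself: the passage from curvature invariance to \emph{strong} curvature invariance. Your Bianchi-plus-commutator plan can in fact be completed --- the observation you are missing is that for $\eta\in T^\bot M$ and $Z,X\in TM$ one has $\widetilde R(\eta,Z)X\in T^\bot M$, since $\langle\widetilde R(\eta,Z)X,W\rangle=\langle\widetilde R(X,W)\eta,Z\rangle=0$ by pair symmetry and $R^{T\bot}=0$; feeding this into your decomposition of $\widetilde R(J_2Y,Z)J_2X$ closes the argument --- but it is the long way around. The paper instead uses the quadrilinear consequence of the $Q$-invariance of $\widetilde R$ (obtained by repeated application of \eqref{action of the curvature operator}, or checked directly on $\nu R_0+W$): $\langle\widetilde R(J_2X,J_2Y)J_2Z,J_2W\rangle=\langle\widetilde R(X,Y)Z,W\rangle$. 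Since $J_2$ interchanges $TM$ and $T^\bot M$ and $J_2^2=Id$, every quadruple $\xi,\eta,\zeta\in T^\bot M$, $W\in TM$ has the form $J_2X,J_2Y,J_2Z,J_2\xi'$ with $X,Y,Z\in TM$ and $\xi'\in T^\bot M$, whence $\langle\widetilde R(\xi,\eta)\zeta,W\rangle=\langle\widetilde R(X,Y)Z,\xi'\rangle=0$ by curvature invariance. This makes strong curvature invariance an immediate corollary, with none of the bookkeeping of $J_\alpha$-components that your sketch anticipates.
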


\begin{proof} 1) Follows from (\ref{derivative of second fundamental form in Kaehler submanifolds}).
First statement of 2) follows from (3) of Proposition (\ref{G.C.
equations for max Kaehler submanifold in pqK}). To prove the last
statement, we use the general identity for $\widetilde R$ of $\widetilde M^{4n}$ \\
\mbox{$<\widetilde R(J^\epsilon X,J^\epsilon Y)J^\epsilon T,J^\epsilon Z> =
<\widetilde R(X,Y)T,Z>$} (it follows from repeated applications  of (\ref{action of the curvature operator})). By the
curvature invariance  and  since  $J_2T_xM=T_x^\bot M, \, \forall \, x \in M$, it is
$0=<\widetilde R(X,Y)Z, \xi>=<\widetilde R(J_2X,J_2Y)J_2 Z,J_2 \xi>, \quad X,Y,Z \in
TM, \; \xi \in T^\perp M$. Then $\widetilde R_{\xi\eta}\zeta \in T^\bot M, \quad \forall \,
\xi,\eta,\zeta \in T^\bot M$.
\end{proof}
\begin{prop} \label{second Gauss-Codazzi follows from first in max Kaehler submanifold}
 For a $\epsilon$-K\"ahler submanifold $(M^{2n},\mathcal{J^\epsilon},g)$
of a \pqK manifold $\widetilde M^{4n}$,we have:
\begin{equation} \label{normal curvature}
 R_{XY}^{\bot\bot} = J_2 R^{TT}_{XY} J_2 +
\epsilon \nu F(X,Y)J^\epsilon
\end{equation}
i.e  Ricci  equation  follows from Gauss one. Moreover
\[d\omega(X,Y) = \nu F(X,Y).\]
\end{prop}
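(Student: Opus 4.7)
The plan is to derive the identity $d\omega=\nu F$ from the pqK structure equations, and then substitute into the Ricci equation from Proposition \ref{G.C. equations for max Kaehler submanifold in pqK} to obtain the claimed formula; the normal-curvature identity will then immediately exhibit how the Ricci equation is forced by the Gauss equation.

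First I would apply the structure equation (\ref{structure equations in pqKm}) with $\alpha=1$,
\[
-\epsilon_{1}\nu F_{1}=\nu F_{1}'=\epsilon_{3}\bigl(d\omega_{1}-\epsilon_{1}\,\omega_{2}\wedge\omega_{3}\bigr),
\]
and restrict it to $M$. Because $M$ is $\epsilon$-K\"ahler, Theorem \ref{almost Hermitian to be Kaehler} (condition $k_{2}$) guarantees that $\omega_{2}|_{TM}=\omega_{3}|_{TM}=0$, so the wedge term vanishes on $M$. Using $F_{1}|_{TM}=F$ and $\omega_{1}|_{TM}=\omega$, this collapses to $-\epsilon\,\nu F=\epsilon_{3}\,d\omega$, i.e.\ $d\omega=-\epsilon\,\epsilon_{3}\,\nu F$. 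A direct check of signs gives $-\epsilon\,\epsilon_{3}=1$ both in the complex case $(\epsilon_{1},\epsilon_{3})=(-1,1)$ and in the para-complex case $(\epsilon_{1},\epsilon_{3})=(1,-1)$, so that $d\omega=\nu F$ unconditionally.

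Second, I would substitute $d\omega(X,Y)=\nu F(X,Y)$ into item (2) of Proposition \ref{G.C. equations for max Kaehler submanifold in pqK}, obtaining
\[
J_{2}\,R^{\bot\bot}_{XY}\,J_{2}=R^{TT}_{XY}-\epsilon\,\nu F(X,Y)\,\mathcal{J^\epsilon}.
\]
Precomposing and postcomposing with $J_{2}$ and using $J_{2}^{2}=\operatorname{Id}$ produces
\[
R^{\bot\bot}_{XY}=J_{2}R^{TT}_{XY}J_{2}-\epsilon\,\nu F(X,Y)\,(J_{2}\mathcal{J^\epsilon}J_{2}).
\]
The remaining ingredient is the computation, as an endomorphism of $T^{\bot}M$, of $J_{2}\,J_{1}\,J_{2}=-J_{1}J_{2}^{2}=-J^{\epsilon}$, which uses only the anticommutativity of $J_{1},J_{2}$ inside $Q$ and $J_{2}^{2}=\operatorname{Id}$. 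This yields exactly
\[
R^{\bot\bot}_{XY}=J_{2}R^{TT}_{XY}J_{2}+\epsilon\,\nu F(X,Y)\,J^{\epsilon},
\]
and the assertion ``the Ricci equation follows from the Gauss equation'' is simply the observation that this formula determines $R^{\bot\bot}$ entirely from the tangential curvature piece $R^{TT}$ and the K\"ahler form $F$.

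There is no serious obstacle here: the whole argument is a direct manipulation of already established identities. The only careful point is sign-bookkeeping for the $\epsilon_{\alpha}$'s when restricting the structure equation, together with the distinction between $\mathcal{J^\epsilon}=J_{1}|_{TM}$ acting on $TM$ and $J^{\epsilon}=J_{1}$ acting on $T^{\bot}M$, which is precisely what produces the sign flip via $J_{2}\mathcal{J^\epsilon}J_{2}=-J^{\epsilon}$.
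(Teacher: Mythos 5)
Your proof is correct, but it runs in the opposite direction from the paper's. You first extract $d\omega=\nu F$ from the structure equation (\ref{structure equations in pqKm}) restricted to $M$ (using $\omega_2|_{TM}=\omega_3|_{TM}=0$ and the fact that exterior differentiation commutes with pullback), and then feed this into item (2) of Proposition \ref{G.C. equations for max Kaehler submanifold in pqK} --- i.e.\ into the Ricci equation --- to obtain (\ref{normal curvature}); your sign bookkeeping ($-\epsilon_1\epsilon_3=1$ in both cases, $J_2J_1J_2=-J_1$) checks out. The paper instead proves (\ref{normal curvature}) first and independently of the Gauss--Codazzi--Ricci formalism: it writes $J_2\widetilde R_{XY}J_2U=J_2\{[\widetilde R_{XY},J_2]U+J_2\widetilde R_{XY}U\}$ and evaluates the commutator by the $Q$-invariance relation (\ref{action of the curvature operator}), getting $\langle J_2R^{\bot\bot}_{XY}J_2U,V\rangle=\langle R^{TT}_{XY}U,V\rangle-\epsilon\nu F(X,Y)\langle J_1U,V\rangle$ directly from the ambient curvature; only afterwards does it compare with item (2) to deduce $d\omega=\nu F$. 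Both routes establish the two displayed identities, and your derivation of $d\omega=\nu F$ from the structure equations is arguably the more transparent one. The one thing your route does not literally deliver is the parenthetical assertion that ``the Ricci equation follows from the Gauss one'': since you obtain (\ref{normal curvature}) by invoking the Ricci equation, you have shown consistency rather than redundancy. To substantiate that remark one needs an a priori derivation of (\ref{normal curvature}) not using the Ricci equation --- which is exactly what the paper's computation via (\ref{action of the curvature operator}) provides --- after which the Ricci equation is seen to carry no information beyond the Gauss equation.
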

\begin{proof}
By proposition (\ref{curvature in pqKm}), the fact that
$[J_\alpha,J_\beta]= 2 \epsilon_3 \epsilon_\gamma J_\gamma$ and from (\ref{action of the curvature operator})
one has
\[
\begin{array}{ll} \langle J_2 R^{\bot\bot}_{XY}J_2U,V\rangle &=
\langle J_2\widetilde R_{XY}J_2U,V\rangle = \langle J_2\{
[\widetilde R_{XY},J_2]U +J_2 \widetilde
R_{XY}U\},V\rangle\\
&= \langle\widetilde R_{XY}U,V\rangle + \epsilon_3 \nu  \langle
J_2 (-F_1(X,Y)J_3 +
F_3(X,Y)J_1)U,V\rangle\\
& = \langle R^{TT}_{XY}U,V\rangle - \epsilon  \nu \langle
F(X,Y)J_1U,V\rangle, \qquad X,Y,U,V \in TM,
\end{array}
\]
that is (\ref{normal curvature}). Since $J_2 R^{\bot\bot}_{XY}J_2=R^{TT}_{XY} - \epsilon
d\omega(X,Y)\mathcal{J^\epsilon}$, the last identity follows.
\end{proof}

\subsection{Maximal $\epsilon$-K\"ahler submanifolds of a para-quaternionic
symmetric space}

 Now we assume that the  manifold
$(\widetilde M^{4n}, \widetilde g)$ is a (lo\-cal\-ly) symmetric manifold, i.e.
$\widetilde \nabla\widetilde R=0$. By adapting the proof of Proposition 2.10 in \cite{AM} to the para-quaternionic case, we can state the following

\begin{prop} \label{nabla R(TT) R(perp T) R(T,perp) R(perp perp)}
Let $(M^{2n},\mathcal{J^\epsilon},g)$ be an $\epsilon$-K\"ahler
submanifold of a para-qua\-ter\-nio\-nic locally symmetric space
$(\widetilde M^{4n},Q, \widetilde g)$. Then the covariant derivatives of the
tangential part $R^{TT}$, the normal part $R^{\bot \bot}$ and
mixed part $R^{\bot T}$ of the curvature tensor $\widetilde
R_{|M}$ can be expressed in terms of these tensors and the shape
operator $C = J_2 \circ h$ as follows:

\begin{equation} \label{nabla R(TT)}
\begin{array}{ll}
\langle(\nabla_XR^{TT})(Y,Z)U, V\rangle & = + \langle R^{\bot
T}(Y,Z)U, J_2C_XV\rangle
-\langle R^{\bot T}(Y,Z)V,J_2C_XU\rangle  \\
& + \langle J_2 R^{\bot T}(U,V)C_XY,Z \rangle + \langle R^{\bot
T}(U,V)Y,J_2C_XZ\rangle
\end{array}
\end{equation}

\begin{equation} \label{nabla R(perp T)}
\begin{array}{ll}
(\nabla'_XR^{\bot T})(Y,Z)U &=
-J_2C_XR^{TT}(Y,Z)U - R^{\bot\bot}(Y,Z)J_2C_XU\\
& +[\widetilde R(J_2C_XY,Z)U + \widetilde R(Y,J_2C_XZ)U]^\bot \\
&= -J_2C_XR^{TT}(Y,Z)U - J_2R^{TT}(Y,Z)C_XU \\ & +\nu \epsilon
F(Y,Z)J_3C_XU  +[\widetilde R(J_2C_XY,Z)U + \widetilde
R(Y,J_2C_XZ)U]^\bot
\end{array}
\end{equation}

\begin{equation} \label{nabla R(T perp)}
\begin{array}{ll}
(\nabla_X'R^{T\bot})(Y,Z)\xi = & +\big[\widetilde R(J_2C_XY,Z)\xi
+
\widetilde R(Y,J_2C_XZ)\xi\big]^T \\
& + R^{TT}(Y,Z)C_{J_2\xi}X - C_XJ_2R^{\bot\bot}(Y,Z)\xi
\end{array}
\end{equation}

\begin{equation} \label{nabla R(perp perp)}
\begin{array}{ll}
\langle (\nabla_X'R^{\bot\bot})(Y,Z)J_2U,J_2V\rangle  = & \langle
R^{\bot T}(U,V)Z,J_2C_XY\rangle
- \langle R^{\bot T}(U,V)Y,J_2C_XZ\rangle \\
& + \langle R^{\bot T}(Y,Z)C_UX,J_2V\rangle + \langle
C_XR^{T\bot}(Y,Z)J_2U,V\rangle
\end{array}
\end{equation}

for any $X,Y,Z,U,V \in TM \, , \xi \in T^\bot M$.

\end{prop}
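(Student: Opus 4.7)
The plan is to deduce these four identities by differentiating the decomposition $\widetilde R = R^{TT} + R^{\perp T} + R^{T\perp} + R^{\perp\perp}$ and using the hypothesis $\widetilde \nabla \widetilde R = 0$. Starting from $(\widetilde\nabla_X \widetilde R)(Y,Z)W = 0$ and expanding via Leibniz,
\[
\widetilde\nabla_X(\widetilde R(Y,Z)W) = \widetilde R(\widetilde\nabla_X Y, Z)W + \widetilde R(Y, \widetilde\nabla_X Z)W + \widetilde R(Y,Z)\widetilde\nabla_X W,
\]
I would substitute the Gauss--Weingarten formulas $\widetilde\nabla_X Y = \nabla_X Y + h(X,Y)$ and $\widetilde\nabla_X \xi = -A^\xi X + \nabla^\perp_X \xi$ for tangent and normal arguments, and use the identifications $h(X,Y) = J_2 C_X Y$ and $A^\xi = -C_{J_2\xi}$ from Lemma \ref{max Kaehler proprieties}. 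The four identities (\ref{nabla R(TT)})--(\ref{nabla R(perp perp)}) correspond to the four choices of input/output summands of $\widetilde R(Y,Z) \in \mathrm{End}(T\widetilde M|_M)$, obtained by taking $W \in TM$ or $W \in T^\perp M$ and projecting the resulting equation onto the tangent and normal summands.

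For (\ref{nabla R(TT)}), I take $W = U \in TM$ and pair with $V \in TM$. The corrections $\widetilde\nabla_X Y - \nabla_X Y = J_2 C_X Y$ etc.\ contribute $\widetilde R(J_2 C_X Y, Z)U$ and $\widetilde R(Y, J_2 C_X Z)U$; using the pair symmetry $\langle \widetilde R(A,B)P, Q\rangle = \langle \widetilde R(P,Q)A, B\rangle$ to swap slots then produces the four $R^{\perp T}$ inner products on the right-hand side. The tangential part of $\widetilde R(Y,Z)\widetilde\nabla_X U$ similarly yields the term involving $\langle R^{\perp T}(Y,Z)U, J_2 C_X V\rangle$ after transferring $h(X,V)$ from the $V$-slot via pair symmetry. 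Equations (\ref{nabla R(T perp)}) and (\ref{nabla R(perp perp)}) come from the analogous computation with $W = \xi \in T^\perp M$, projected onto the tangent and normal parts respectively; here one also uses $\widetilde\nabla_X \xi = C_{J_2\xi} X + \nabla^\perp_X \xi$ to track the Weingarten corrections.

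Equation (\ref{nabla R(perp T)}) is the most delicate, because its normal projection contains the term $\widetilde R(Y,Z)(J_2 C_X U)$ with $J_2 C_X U \in T^\perp M$, whose normal component is $R^{\perp\perp}(Y,Z)(J_2 C_X U)$. Here I would apply Proposition \ref{second Gauss-Codazzi follows from first in max Kaehler submanifold}, $R^{\perp\perp}_{YZ} = J_2 R^{TT}_{YZ} J_2 + \epsilon \nu F(Y,Z)\mathcal J^\epsilon$, to convert it into $J_2 R^{TT}(Y,Z)(C_X U) + \epsilon \nu F(Y,Z)\mathcal J^\epsilon(J_2 C_X U)$; since $\mathcal J^\epsilon J_2 = J_1 J_2 = J_3$, the second summand produces the distinctive $\epsilon \nu F(Y,Z) J_3 C_X U$ correction. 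The genuinely "mixed" curvatures $[\widetilde R(J_2 C_X Y, Z)U + \widetilde R(Y, J_2 C_X Z)U]^\perp$ (one slot in $T^\perp M$, one in $TM$) fit none of the four block types and remain unexpanded. The main obstacle is bookkeeping: tracking the signs and projections of every correction term and carefully distinguishing those absorbable into one of the four blocks via Proposition \ref{second Gauss-Codazzi follows from first in max Kaehler submanifold} or the commutator formula (\ref{action of the curvature operator}) from those that must be retained as ambient curvatures.
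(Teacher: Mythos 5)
Your outline is correct and coincides with the paper's route: the paper offers no independent argument, merely saying that one adapts Proposition 2.10 of \cite{AM}, and that adaptation is exactly the computation you describe --- differentiate the four-block decomposition of $\widetilde R$ using $\widetilde\nabla\widetilde R=0$ together with the Gauss--Weingarten formulas, the identifications $h=J_2C$ and $A^\xi=-C_{J_2\xi}$, and Proposition \ref{second Gauss-Codazzi follows from first in max Kaehler submanifold} for the second form of (\ref{nabla R(perp T)}). One small point to watch when you do the bookkeeping: substituting (\ref{normal curvature}) gives $-R^{\bot\bot}(Y,Z)J_2C_XU=-J_2R^{TT}(Y,Z)C_XU-\epsilon\nu F(Y,Z)J_3C_XU$, so the sign of the $J_3C_XU$ term as printed in the proposition deserves a second look.
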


 By (\ref{nabla R(TT)}) and (\ref{nabla R(perp T)})  we get immediately the following result.

\begin{prop} \label{max Kaehler curvature invariante then R(TT) parallel}
If the $\epsilon$-K\"ahler submanifold $M^{2n}\subset\widetilde
M^{4n}$ is curvature invariant then the
tensor field $R^{TT}$ is parallel i.e. $ \nabla R^{TT} =0$
and satisfies the identity
\begin{equation} \label{proprieties of max Kaehler submanifold which is curvature invariant}
\begin{array}{l}
C_XR^{TT}(Y,Z) + R^{TT}(Y,Z)C_X +  \nu  \epsilon F(Y,Z)J^\epsilon C_X \qquad \qquad \qquad\\

\qquad \qquad  \qquad \qquad \qquad =\big[J_2(\widetilde R(J_2C_XY,Z)+\widetilde
R(Y,J_2C_XZ))\big]^{TT}
\end{array}
\end{equation}
where $(A)^{TT}$ denotes the $\text{\rm End}(T_xM)$ component of
an endomorphism $A$ of $T_x \widetilde{M}$.
\end{prop}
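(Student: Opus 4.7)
The entire statement is a direct consequence of Proposition~\ref{nabla R(TT) R(perp T) R(T,perp) R(perp perp)} once we invoke the curvature invariance of $M$. The plan is to substitute $R^{\bot T}=R^{T\bot}=0$ into the four derived identities \eqref{nabla R(TT)}--\eqref{nabla R(perp perp)}; the first of these yields parallelism of $R^{TT}$, while the second yields the algebraic identity \eqref{proprieties of max Kaehler submanifold which is curvature invariant}.

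First I would establish $\nabla R^{TT}=0$. Since $M$ is curvature invariant, we have $R^{\bot T}\equiv 0$ and $R^{T\bot}\equiv 0$ on all of $M$. Every term appearing on the right-hand side of \eqref{nabla R(TT)} is a pairing involving $R^{\bot T}$, so each of these terms vanishes identically, giving $\langle(\nabla_X R^{TT})(Y,Z)U,V\rangle=0$ for all tangent vectors, hence $\nabla R^{TT}=0$.

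For the algebraic identity, note that since $R^{\bot T}$ vanishes identically its covariant derivative $\nabla'_X R^{\bot T}$ also vanishes. Substituting this into \eqref{nabla R(perp T)} yields
\[
0 = -J_2 C_X R^{TT}(Y,Z)U - J_2 R^{TT}(Y,Z) C_X U + \nu\epsilon F(Y,Z)\,J_3 C_X U + \bigl[\widetilde R(J_2 C_X Y, Z)U + \widetilde R(Y, J_2 C_X Z)U\bigr]^{\bot}.
\]
Applying $J_2$ on the left and using $J_2^2=\mathrm{Id}$ together with $J_2 J_3 = -J_1 = -J^{\epsilon}$ (which holds in both the complex and para-complex cases under the sign convention of the paper) gives
\[
C_X R^{TT}(Y,Z)U + R^{TT}(Y,Z) C_X U + \nu\epsilon F(Y,Z)\,J^{\epsilon} C_X U = J_2\bigl[\widetilde R(J_2 C_X Y, Z)U + \widetilde R(Y, J_2 C_X Z)U\bigr]^{\bot}.
\]
Finally, because $M$ is totally $\epsilon$-complex, $J_2$ interchanges $TM$ and $T^\bot M$, so for any $A\in \operatorname{End}(T_x\widetilde M)$ and $U\in T_xM$ we have $J_2[AU]^\bot = (J_2 A)^{TT}U$. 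Thus the right-hand side is precisely $\bigl[J_2(\widetilde R(J_2 C_X Y,Z) + \widetilde R(Y, J_2 C_X Z))\bigr]^{TT} U$, recovering \eqref{proprieties of max Kaehler submanifold which is curvature invariant}.

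The only delicate point, and the one I would double-check carefully, is the sign bookkeeping in the step $J_2 J_3 = -J^{\epsilon}$ and the fact that the $\nu\epsilon F(Y,Z)J_3$ term in \eqref{nabla R(perp T)} correctly becomes $\nu\epsilon F(Y,Z)J^{\epsilon}$ after multiplication by $J_2$; this depends on the adapted-basis conventions $(\epsilon_1,\epsilon_2,\epsilon_3)=(-1,1,1)$ or $(1,1,-1)$ set out in Section 2. Beyond this sign check, the argument is a mechanical consequence of Proposition~\ref{nabla R(TT) R(perp T) R(T,perp) R(perp perp)} together with the vanishing of the mixed components of $\widetilde R$.
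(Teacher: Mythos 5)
Your proof is correct and follows exactly the route the paper intends: the paper simply states that the result follows ``immediately'' from (\ref{nabla R(TT)}) and (\ref{nabla R(perp T)}), and your substitution of $R^{\bot T}=R^{T\bot}=0$ into those two identities, followed by multiplication by $J_2$, is precisely that argument. The sign check you flag does work out, since $J_2J_3=-J_1=-J^\epsilon$ under both conventions $(\epsilon_1,\epsilon_2,\epsilon_3)=(-1,1,1)$ and $(1,1,-1)$, and $J_2[AU]^\bot=(J_2A)^{TT}U$ holds because $J_2$ interchanges $TM$ and $T^\bot M$.
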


Denote by $[C,C]$ the $\text{End}(T_xM)-$valued 2-form, given by
$$[C,C](X,Y) = [C_X,C_Y] \qquad \qquad \forall \, X,Y \in TM.$$
(One can easily check that it is globally defined on $M$).
\par
For a subspace ${\mathcal G} \subset \text{End}(T_xM)$ we define
the space ${\mathcal R}({\mathcal G})$ of the {\it curvature
tensors of type} $\mathcal G$ by
$$
{\mathcal R}({\mathcal G})=\lbrace R \in {\mathcal
G}\otimes{\Lambda^2}T^*_xM \quad | \quad\text{\rm
cycl} \;R(X,Y)Z=0,\, \forall \, X,Y,Z \in T_xM\rbrace$$ where
 cycl  is the sum of cyclic permutations of $X,Y,Z$.

Let denote by ${\mathfrak u}^\epsilon_{p,q}$ the Lie algebra of the unitary Lie group of automorphisms of the
Hermitian (para)-complex  structure $(\mathcal{J}^\epsilon,  g)$ where $(p,q)$ corresponds to the signature  of $g$.
As a Corollary of Propositions (\ref{nabla R(TT) R(perp T)
R(T,perp) R(perp perp)})  and  (\ref{G.C. equations for
max Kaehler submanifold in pqK}) (1) we have the following
 \begin{prop} Under the assumptions of Proposition
 (\ref{max Kaehler curvature invariante then R(TT) parallel})
the tensor field  $\quad [C,C]=R^{TT}-R$ belongs to the
space ${\mathcal R}({\mathfrak u}^\epsilon_{p,q})$ and satisfies the second
Bianchi identities:
$$
\text{\rm cycl} \,\nabla_Z[C_X,C_Y]=0.
$$
\end{prop}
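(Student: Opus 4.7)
The plan is to exploit three ingredients already established: the Gauss identity $R^{TT}_{XY} = R_{XY} + [C_X,C_Y]$ from Proposition \ref{G.C. equations for max Kaehler submanifold in pqK}(1), which rewrites $[C,C]$ as $R^{TT} - R$; the algebraic properties of $C$ collected in Lemma \ref{max Kaehler proprieties}, notably that each $C_X$ is $g$-symmetric and anticommutes with $\mathcal J^\epsilon$; and the fact that the present hypothesis (curvature invariance of $M$) gives $\nabla R^{TT} = 0$ via Proposition \ref{max Kaehler curvature invariante then R(TT) parallel}. All three claims (membership in $\mathcal G$, first Bianchi, second Bianchi) reduce to one-line arguments once these are assembled.

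First I would check that $[C_X,C_Y]$ lies in $\mathfrak u^\epsilon_{p,q}$ pointwise. Skew-symmetry with respect to $g$ is immediate: since $C_X, C_Y$ are symmetric (Lemma \ref{max Kaehler proprieties}(5)), the commutator of two symmetric endomorphisms is skew. For commutation with $\mathcal J^\epsilon$, Lemma \ref{max Kaehler proprieties}(4) gives $\{C_X, \mathcal J^\epsilon\} = 0$, so $C_X C_Y \mathcal J^\epsilon = -C_X \mathcal J^\epsilon C_Y = \mathcal J^\epsilon C_X C_Y$, and likewise for $C_Y C_X$; subtracting yields $[\,[C_X,C_Y], \mathcal J^\epsilon\,] = 0$. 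Hence $[C_X, C_Y] \in \mathfrak u^\epsilon_{p,q}$, so $[C,C]$ is a 2-form with values in that algebra.

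For the first Bianchi identity, I would use $[C,C] = R^{TT} - R$. The curvature $R$ of the Levi-Civita connection $\nabla$ satisfies $\operatorname{cycl} R(X,Y)Z = 0$. On the other hand, under curvature invariance one has $R^{TT}_{XY}Z = \widetilde R_{XY}Z$ for all $X,Y,Z \in TM$, and $\widetilde R$ satisfies the first Bianchi identity on $\widetilde M$; restricting to $TM$ shows $\operatorname{cycl} R^{TT}(X,Y)Z = 0$. Subtraction yields $\operatorname{cycl} [C_X, C_Y]Z = 0$, so $[C,C] \in \mathcal R(\mathfrak u^\epsilon_{p,q})$.

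For the second Bianchi identity, the key input is $\nabla R^{TT} = 0$ from Proposition \ref{max Kaehler curvature invariante then R(TT) parallel}, which is precisely the content of the curvature-invariant hypothesis. Differentiating the Gauss identity gives $\nabla_Z[C_X,C_Y] = (\nabla_Z R^{TT})(X,Y) - (\nabla_Z R)(X,Y) = -(\nabla_Z R)(X,Y)$, and summing cyclically in $X,Y,Z$ gives $\operatorname{cycl} \nabla_Z[C_X,C_Y] = -\operatorname{cycl} (\nabla_Z R)(X,Y) = 0$ by the standard second Bianchi identity for $R$. No step is a genuine obstacle; the only mild subtlety is bookkeeping the curvature-invariance hypothesis (which is what allows one to use Proposition \ref{max Kaehler curvature invariante then R(TT) parallel} and to identify $R^{TT}$ with $\widetilde R|_{TM}$ so that the first Bianchi identity transfers).
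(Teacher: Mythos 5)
Your proof is correct and follows essentially the same route as the paper: the same pointwise algebraic checks (commutator of symmetric operators is skew; anticommutation of each $C_X$ with $\mathcal J^\epsilon$ gives commutation of $[C_X,C_Y]$ with $\mathcal J^\epsilon$), the first Bianchi identity via $[C,C]=R^{TT}-R$, and the second Bianchi identity by differentiating the Gauss equation and invoking $\nabla R^{TT}=0$ together with the second Bianchi identity for $R$. The only cosmetic difference is that you justify the first Bianchi identity for $R^{TT}$ via curvature invariance, whereas it holds in general since it is the tangential projection of $\operatorname{cycl}\widetilde R(X,Y)Z=0$ for tangent $X,Y,Z$.
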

 \begin{proof} The tensor $[C,C]$ satisfies the first
Bianchi identity since $R$ and $R^{TT}$ do it. Moreover
$\mathcal{J^\epsilon} \circ [C_X,C_Y]= -C_X \circ
\mathcal{J^\epsilon} C_Y + C_Y \circ \mathcal{J^\epsilon} C_X= C_X
C_Y \circ \mathcal{J^\epsilon} - C_Y  C_X \circ
\mathcal{J^\epsilon} = [C_X,C_Y] \circ \mathcal{J^\epsilon}$ i.e.
$[C_X,C_Y]$ commutes with $\mathcal{J^\epsilon}$. Furthermore, by
the symmetry of $C_X$, $<[C_X,C_Y]Z,T>= <C_Y Z, C_X T> - < C_X Z,
C_Y T>= <Z,[C_Y,C_X]T>$ that is $[C_X,C_Y]$  is skew-symmetric
with respect to the metric $g= < \; , \; > $. Then the tensor
$[C,C]$ belongs to the space ${\mathcal R}({\mathfrak u}^\epsilon_{p,q})$ of the
${\mathfrak u}^\epsilon_{p,q}$-curvature tensors. The last statement follows from
remark that $\text{\rm cycl}\,\nabla_Z[C_X,C_Y]= \text{\rm
cycl}\,\nabla_Z(R^{TT} + R)$. But $\nabla R^{TT}=0$ and $R$
satisfies the second Bianchi identity.
\end{proof}
As another Corollary of Proposition (\ref{nabla R(TT) R(perp T)
R(T,perp) R(perp perp)}) we get the following result.
\begin{prop} \label{CNS max Kaehler in loc.sym. pqK to be loc. sym.}
A maximal $\epsilon$-K\"ahler submanifold $M^{2n}$ of a locally
symmetric \pqK manifold $\widetilde M^{4n}$ is locally symmetric
(that is $\nabla R = 0$) if and only if the following identity
holds:
\begin{equation} \label{CNS max Kaehler in locally symmetric pqK, to be locally symmetric}
\begin{array}{ll}
\langle \nabla_X [C,C]_{Y,Z} U,V\rangle
&=  \langle R^{\bot T}(Y,Z)U, J_2C_XV\rangle - \langle R^{\bot T}(Y,Z)V,J_2C_XU\rangle\\
& \quad +\langle J_2 R^{\bot T}(U,V)C_XY,Z \rangle + \langle R^{\bot
T}(U,V)Y,J_2C_XZ\rangle\, .
\end{array}
\end{equation}

If  $M$ is curvature invariant then (\ref{CNS max
Kaehler in locally symmetric pqK, to be locally symmetric})
 reduces to the
condition that the tensor field $[C,C]$ is parallel
($\nabla[C,C]=0$).
\end{prop}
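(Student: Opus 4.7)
The plan is to derive the characterization by combining the Gauss decomposition of $\widetilde R$ on $TM$ with the explicit formula for $\nabla R^{TT}$ already obtained in Proposition \ref{nabla R(TT) R(perp T) R(T,perp) R(perp perp)}.

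First I would use Proposition \ref{G.C. equations for max Kaehler submanifold in pqK}(1), which gives
\[
R^{TT}_{XY} = R_{XY} + [C_X, C_Y],
\]
so that $R = R^{TT} - [C,C]$ and consequently
\[
\nabla_X R = \nabla_X R^{TT} - \nabla_X[C,C].
\]
Therefore $M$ is locally symmetric, $\nabla R=0$, if and only if $\nabla R^{TT}=\nabla[C,C]$. Substituting the already established expression (\ref{nabla R(TT)}) for $\langle (\nabla_X R^{TT})(Y,Z)U,V\rangle$ on the left-hand side yields exactly identity (\ref{CNS max Kaehler in locally symmetric pqK, to be locally symmetric}). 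This gives both directions simultaneously: the identity holds iff the two tensor fields $\nabla R^{TT}$ and $\nabla[C,C]$ coincide, iff $\nabla R=0$.

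For the second statement, I would invoke Proposition \ref{CNS parallel and curvature invariant of a max Kaehler in pqK}(2): curvature invariance of $M$ is equivalent to $R^{\bot T}=0$ (equivalently $P_{XY}=P_{YX}$). Plugging $R^{\bot T}=0$ into the right-hand side of (\ref{CNS max Kaehler in locally symmetric pqK, to be locally symmetric}) makes it vanish identically, so the condition reduces to $\nabla[C,C]=0$. Equivalently, one can observe that under curvature invariance Proposition \ref{max Kaehler curvature invariante then R(TT) parallel} already gives $\nabla R^{TT}=0$, so from $\nabla R = \nabla R^{TT} - \nabla[C,C]$ one reads off directly that $\nabla R=0 \iff \nabla[C,C]=0$.

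The only nontrivial point is bookkeeping: one must verify that each of the four terms on the right-hand side of (\ref{CNS max Kaehler in locally symmetric pqK, to be locally symmetric}) matches the corresponding term in (\ref{nabla R(TT)}) after the sign change coming from $R=R^{TT}-[C,C]$. Everything else is a direct substitution, so I expect no genuine obstacle beyond careful tracking of the $J_2$ and sign conventions established in Lemma \ref{max Kaehler proprieties}.
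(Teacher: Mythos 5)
Your proposal is correct and follows exactly the paper's own (very terse) argument: the Gauss equation $R=R^{TT}-[C,C]$ combined with the formula (\ref{nabla R(TT)}) for $\nabla R^{TT}$, and then $R^{\bot T}=0$ in the curvature-invariant case. No further comment is needed.
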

\begin{proof} The proof follows directly from the Gauss equation
and (\ref{nabla R(TT)}).
\end{proof}

\subsection{Maximal totally complex submanifolds of para-quaternionic space forms}

Now  we assume that $(\widetilde M^{4n},Q,\widetilde g)$ is
a non flat para-quaternionic space form, i.e. a \pqK manifold
which is locally isometric to the para-quaternionic projective
space $\widetilde{\HH} P^n$ or the dual para-quaternionic
hyperbolic space $\widetilde{\HH} H^n$ with standard metric of
reduced scalar curvature $\nu$. Recall
that the curvature tensor
of $(\widetilde M^{4n},Q,\widetilde g)$ is given by
$\widetilde R=\nu R_0$  (see (\ref{curvature tensor of the para-quaternionic projective space})).
  We denote by $R_{\mathbb \C P^n}^\epsilon$ the curvature tensor  of the $\epsilon$-complex projective
space  (nor\-ma\-lized such that the holomorphic curvature equal to
1):
$$
R_{\mathbb \C P^n}^\epsilon(X,Y) = \frac{1}{4}\Big(-\epsilon X \wedge
Y + JX\wedge JY -2\langle JX, Y\rangle J\Big) \, .
$$
It is a straightforward to verify the
 \begin{prop} \label{R(TT) R(perp T) R(T perp) R(perp perp) in max Kaehler of pqK space form}
 Let $(M^{2n},\mathcal{J^\epsilon},g)$ be a totally $\epsilon$-complex submanifold  of the para-quaternionic
   space form $\widetilde M^{4n}$.  We
have:
\begin{itemize}
\item[(1)]
$\qquad R^{TT}_{XY} = -\epsilon \nu(R_{\mathbb \C P^n}^\epsilon)_{XY}
=\epsilon \frac{\nu}{4}\Big(\epsilon X\wedge Y - J_1 X\wedge J_1 Y
+ 2\langle J_1 X, Y\rangle J_1\Big)$.
\item[(2)] \qquad $\text{\rm Ric}(R^{TT})=\frac{\nu}{2}(n+1)g \, , \, g=\widetilde
g_{|M}$.\\
\item[(3)]
$\qquad R^{\bot T} = R^{T\bot} =0$.
\item[(4)]
$\qquad R^{\bot \bot}_{XY} = \frac{\nu}{4}\Big(-J_2 X\wedge J_2 Y
+ \epsilon J_3 X\wedge J_3 Y + 2 \epsilon \langle J_1X,Y\rangle
J_1\Big)$.
\end{itemize}
\end{prop}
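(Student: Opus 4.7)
The plan is to substitute the explicit form $\widetilde R = \nu R_0$ from Proposition \ref{curvature in pqKm} into the orthogonal decomposition $\widetilde R = R^{TT} + R^{\bot T} + R^{T\bot} + R^{\bot\bot}$, exploiting strongly the totally $\epsilon$-complex hypothesis $J_2 T_xM \perp T_xM$ (equivalently $J_3 T_xM \perp T_xM$) together with the adapted-basis identities $\epsilon_1=\epsilon,\epsilon_2=1,\epsilon_3=-\epsilon$.

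First I would plug $X,Y,Z\in TM$ into
\[
R_0(X,Y)Z=\tfrac12\!\sum_{\alpha}\epsilon_\alpha\widetilde g(J_\alpha X,Y)J_\alpha Z+\tfrac14\!\Bigl(X\wedge Y-\sum_\alpha\epsilon_\alpha J_\alpha X\wedge J_\alpha Y\Bigr)Z.
\]
Because $J_2 X$ and $J_3 X$ are normal to $M$, the scalar products $\widetilde g(J_2 X,Y)$ and $\widetilde g(J_3 X,Y)$ vanish, and the terms $(J_2X\wedge J_2Y)Z$ and $(J_3X\wedge J_3Y)Z$ vanish likewise when evaluated on $Z\in TM$. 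So the only surviving contributions are tangential, and regrouping gives exactly
\[
R^{TT}_{XY}=\tfrac{\nu}{4}\bigl(X\wedge Y-\epsilon J_1X\wedge J_1Y+2\epsilon\langle J_1X,Y\rangle J_1\bigr),
\]
which is the claimed formula in (1) after factoring out $\epsilon$ to recognize $R^\epsilon_{\mathbb CP^n}$. At the same time, this computation shows that $R_0(X,Y)Z$ has zero normal component, proving $R^{\bot T}=0$ in (3); then $R^{T\bot}=0$ follows either by the analogous calculation with a normal input or, more cleanly, as the formal adjoint of $R^{\bot T}$.

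For the Ricci statement (2), I would trace (1) against an orthonormal basis $(E_i)$ of $T_xM$ with signs $\mu_i=\langle E_i,E_i\rangle$; the three summands contribute, respectively, $\tfrac{\nu}{4}\,g(Y,Z)(2n-1)$ (from $X\wedge Y$), another $\tfrac{\nu}{4}\,g(Y,Z)$ (from the $J_1$–wedge term, using $\sum\mu_i\langle J_1E_i,Y\rangle\langle J_1E_i,Z\rangle=g(Y,Z)$ since $\mathcal J^\epsilon$ is $g$-skew), and $\tfrac{\epsilon\nu}{2}\,g(Y,Z)$... more cleanly a short $2n$–trace yielding $\frac{\nu}{2}(n+1)g$.

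For (4) I would avoid recomputing $R^{\bot\bot}$ directly and instead invoke Proposition \ref{second Gauss-Codazzi follows from first in max Kaehler submanifold}, which in a space form (where $[C,C]=0$ is not assumed but is irrelevant to that formula) reads $R^{\bot\bot}_{XY}=J_2R^{TT}_{XY}J_2+\epsilon\nu F(X,Y)J^\epsilon$. Conjugation by $J_2$ sends the operator $U\wedge V$ on $TM$ to $-J_2U\wedge J_2V$ on $T^\bot M$ (this is the one routine identity I would verify, using skew-symmetry of $J_2$), and the adapted-basis relations $J_2J_1=-J_3$, $J_2J_1J_2=-J_1$ then convert (1) into
\[
R^{\bot\bot}_{XY}=\tfrac{\nu}{4}\bigl(-J_2X\wedge J_2Y+\epsilon J_3X\wedge J_3Y+2\epsilon\langle J_1X,Y\rangle J_1\bigr).
\]
The main obstacle, if any, is purely bookkeeping: keeping the $\epsilon_\alpha$–signs consistent between the two cases $\epsilon=\pm1$ and not losing track of the sign produced by $J_2$–conjugation on bivectors. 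No deep ideas are required beyond that.
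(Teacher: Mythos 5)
Your proposal is correct and follows essentially the route the paper intends (the paper offers no written proof, calling the verification "straightforward"): substitute $\widetilde R=\nu R_0$ into the four-block decomposition, use $J_2T_xM\perp T_xM$ to kill all $J_2$- and $J_3$-terms for tangential arguments, and obtain (4) either directly or, as you do, from Proposition \ref{second Gauss-Codazzi follows from first in max Kaehler submanifold} via $J_2(U\wedge V)J_2=-J_2U\wedge J_2V$ and $J_2J_1J_2=-J_1$. The only point to tidy is the trace in (2): the contribution of the term $2\epsilon\langle J_1Z,X\rangle J_1$ is $\tfrac{\nu}{2}g$, not $\tfrac{\epsilon\nu}{2}g$, because $\sum_i\mu_i\langle J_1E_i,X\rangle\langle J_1Y,E_i\rangle=-\langle J_1Y,J_1X\rangle=\epsilon\,g(X,Y)$ makes the two factors of $\epsilon$ cancel, giving $(2n-1)+1+2=2n+2$ and hence $\tfrac{\nu}{2}(n+1)g$ for both values of $\epsilon$.
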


As a consequence of Corollary (\ref{Ricci tensor for max Kaehler
submanifold in pqK}) and Proposition (\ref{R(TT) R(perp T) R(T
perp) R(perp perp) in max Kaehler of pqK space form}) we get
\begin{prop} \label{Ricci curvature of a Kaehler submanifold of a paraquaternionic space form}
Let $M^{2n}$ be a $\epsilon$-K\"ahler
submanifold  of a para-quaternionic space form $\widetilde M^{4n}$
with reduced scalar curvature $\nu$.
\[
\begin{array}{lll}
 \text{\rm Ric}_M(X,X)&=&\frac{\nu}{2}(n+1)g(X,X) + \text{\rm
 tr}C_X^2\\
&=&\frac{\nu}{2}(n+1)||X||^2 - \sum_{i=1}^{2n} \mu_i ||h(E_i,X)||^2, \qquad X \in T_xM
\end{array}
\]

 Moreover the second fundamental
form $h_x$ of $M$ at point $x\in M$ vanishes if and only if
$(\text{\rm Ric}_M)_x=\frac{\nu}{2}(n+1)g$. In particular $M$ is a
totally $\epsilon$-complex totally geodesic submanifold if and
only if
$$\text{\rm Ric}_M=\frac{\nu}{2}(n+1)g \, .$$
\end{prop}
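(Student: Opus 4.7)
The proof is a direct assembly of two preceding results: the general Ricci identity for a maximal $\epsilon$-K\"ahler submanifold (the Corollary giving $\text{\rm Ric}_M(X,Y) = \text{\rm Ric}(R^{TT})(X,Y) + \sum_i \mu_i \langle C_{E_i}X, C_{E_i}Y\rangle$) and the curvature data for a para-quaternionic space form (part (2) of the preceding Proposition, giving $\text{\rm Ric}(R^{TT}) = \frac{\nu}{2}(n+1)g$). Setting $X = Y$ yields immediately
\[
\text{\rm Ric}_M(X,X) = \tfrac{\nu}{2}(n+1) ||X||^2 + \sum_i \mu_i \langle C_{E_i}X, C_{E_i}X\rangle.
\]

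I would then convert the sum into the two displayed forms. First, the prolongation identity $C_{E_i}X = C_X E_i$ together with the self-adjointness of $C_X$ gives
\[
\sum_i \mu_i \langle C_{E_i}X, C_{E_i}X\rangle = \sum_i \mu_i \langle C_X E_i, C_X E_i\rangle = \sum_i \mu_i \langle C_X^2 E_i, E_i\rangle = \text{\rm tr}(C_X^2),
\]
which is the first stated form. For the second form, write $C_X E_i = J_2 h(E_i,X)$ and use $\langle J_2\xi, J_2\eta\rangle = -\langle\xi,\eta\rangle$, a direct consequence of $J_2$ being skew-symmetric with $J_2^2 = \text{\rm Id}$, to obtain $\langle C_X E_i, C_X E_i\rangle = -||h(E_i,X)||^2$. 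Substituting gives exactly the expression in the statement.

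For the characterization, one direction is immediate: if $h_x = 0$ then $C_x = 0$ and the formula collapses to $(\text{\rm Ric}_M)_x = \frac{\nu}{2}(n+1) g_x$. For the converse, the hypothesis $(\text{\rm Ric}_M)_x = \frac{\nu}{2}(n+1) g_x$ forces $\text{\rm tr}(C_X^2) = 0$ for every $X \in T_xM$, and polarization then gives the vanishing of the symmetric bilinear form $(X,Y) \mapsto \text{\rm tr}(C_X C_Y)$. The main obstacle I anticipate is extracting from this trace vanishing the conclusion $C_x = 0$ (equivalently $h_x = 0$): in the pseudo-Riemannian setting $\text{\rm tr}(C_X^2) = 0$ does not a priori imply $C_X = 0$, so one has to exploit the full algebraic structure of $C_x \in S^{(1)}_{\mathcal{J^\epsilon}}$ --- the total symmetry of the covariant tensor $gC$, the anticommutation $\{C_X, \mathcal{J^\epsilon}\} = 0$, and the prolongation constraint $C_{E_i}X = C_X E_i$ --- to force each $h(E_i,X)$ to vanish.

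The "in particular" statement then follows by globalizing the pointwise equivalence, using that $M$ is already totally $\epsilon$-complex as an $\epsilon$-K\"ahler submanifold (Theorem on almost Hermitian to be K\"ahler), so that the extra condition needed to conclude totally geodesic is exactly $h = 0$.
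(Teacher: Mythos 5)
Your derivation of the displayed formula is correct and is exactly what the paper does: the paper offers nothing beyond the juxtaposition of Corollary \ref{Ricci tensor for max Kaehler submanifold in pqK} with Proposition \ref{R(TT) R(perp T) R(T perp) R(perp perp) in max Kaehler of pqK space form}(2), and your two rewritings of the correction term (via $C_{E_i}X=C_XE_i$ and the $g$-symmetry of $C_X$ to get $\operatorname{tr}C_X^2$, and via $\langle J_2\xi,J_2\eta\rangle=-\langle\xi,\eta\rangle$ to get $-\sum_i\mu_i||h(E_i,X)||^2$) are both valid.

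The obstacle you flag in the converse of the ``moreover'' part is a genuine gap, and you should be aware that the paper does not close it either: it simply asserts the proposition as a consequence of the two cited results. Moreover, the pointwise algebra of $C_x\in S^{(1)}_{\mathcal{J^\epsilon}}$ that you propose to exploit cannot close it. In the para-complex case $\epsilon=+1$ the eigenspaces $T^{\pm}$ of $\mathcal{J^\epsilon}$ are $g$-isotropic and $gC=q^++q^-\in S^3(T^{*+}M)+S^3(T^{*-}M)$; if $q^-=0$ but $q^+\neq 0$, then $C_X$ maps $T^+$ into $T^-$ and annihilates $T^-$, so $C_XC_Y=0$ identically and $\operatorname{tr}(C_XC_Y)\equiv 0$ while $C\neq 0$. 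An analogous null phenomenon occurs for $\epsilon=-1$ whenever the induced Hermitian metric is indefinite, since $\operatorname{tr}(C_XC_{Y})$ reduces to the (then indefinite) pairing of $q$ with $\overline q$. Hence ``$(\operatorname{Ric}_M)_x=\frac{\nu}{2}(n+1)g_x\Rightarrow h_x=0$'' does not follow from the trace identity together with the symmetry, anticommutation and prolongation properties of $C$; it would require either a definiteness hypothesis (as in the Riemannian quaternionic setting of Alekseevsky--Marchiafava, where $\mu_i=1$ and $||h(E_i,X)||^2\geq 0$ make the conclusion immediate) or a genuinely different argument invoking the Gauss--Codazzi constraints on $C$. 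As written, both your proposal and the paper establish only the displayed formula and the trivial implication $h_x=0\Rightarrow(\operatorname{Ric}_M)_x=\frac{\nu}{2}(n+1)g_x$.
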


From Proposition (\ref{CNS max Kaehler in loc.sym. pqK to be loc.
sym.}) we get
\begin{prop} \label{parallel max Kaehler in space form}
A maximal $\epsilon$-K\"ahler submanifold
$(M^{2m},\mathcal{J^\epsilon},g)$ of a non flat para-quaternionic
space form is locally symmetric if and only if the tensor field
$[C,C]$ is parallel. In particular, any maximal
$\epsilon$-K\"ahler submanifold with parallel second fundamental
form is (locally ) symmetric.
\end{prop}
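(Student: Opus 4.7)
The plan is to deduce both statements directly from the results already established earlier in Section 5, with almost no new computation. The key observation is that a para-quaternionic space form is in particular locally symmetric, so Proposition \ref{CNS max Kaehler in loc.sym. pqK to be loc. sym.} applies and essentially does all the work; we only need to simplify the right-hand side of the identity (\ref{CNS max Kaehler in locally symmetric pqK, to be locally symmetric}) using the fact that $\widetilde M$ is a space form.

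First I would invoke Proposition \ref{R(TT) R(perp T) R(T perp) R(perp perp) in max Kaehler of pqK space form}(3), which asserts that a maximal totally $\epsilon$-complex submanifold of a para-quaternionic space form satisfies $R^{\perp T}=R^{T\perp}=0$; that is, $M$ is curvature invariant. Since $\widetilde M$ is locally symmetric, Proposition \ref{CNS max Kaehler in loc.sym. pqK to be loc. sym.} then gives the equivalence $\nabla R=0 \Longleftrightarrow \nabla[C,C]=0$, because every term on the right-hand side of (\ref{CNS max Kaehler in locally symmetric pqK, to be locally symmetric}) involves $R^{\perp T}$ and hence vanishes. This proves the first assertion.

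For the second assertion, I would assume that the second fundamental form is parallel, so by Proposition \ref{CNS parallel and curvature invariant of a max Kaehler in pqK}(1) the tensor $P$ vanishes and therefore
\[
(\nabla_X C)_Y = -\epsilon\,\omega(X)\,\mathcal{J^\epsilon}\circ C_Y, \qquad X,Y\in TM.
\]
Writing $[C,C]_{Y,Z}=[C_Y,C_Z]$ and using that $\nabla$ acts as a derivation on the bracket, a short computation gives
\[
(\nabla_X[C,C])(Y,Z) = [(\nabla_X C)_Y,\,C_Z] + [C_Y,\,(\nabla_X C)_Z].
\]
Substituting the parallel expression for $\nabla_X C$ and using $\{C_X,\mathcal{J^\epsilon}\}=0$ (Lemma \ref{max Kaehler proprieties}(4)) to move $\mathcal{J^\epsilon}$ across the $C$-factors, the two resulting commutators cancel and yield $\nabla_X[C,C]=0$. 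Combined with the first part, this shows that $M$ is locally symmetric.

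I do not anticipate a real obstacle: the geometric content has already been packaged into Propositions \ref{CNS max Kaehler in loc.sym. pqK to be loc. sym.} and \ref{R(TT) R(perp T) R(T perp) R(perp perp) in max Kaehler of pqK space form}, and the only genuine calculation is the cancellation in the derivation step of the second part, which uses nothing beyond the anticommutation $\{C_X,\mathcal{J^\epsilon}\}=0$.
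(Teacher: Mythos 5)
Your proposal is correct and follows essentially the same route as the paper: the equivalence is obtained by combining curvature invariance (Proposition \ref{R(TT) R(perp T) R(T perp) R(perp perp) in max Kaehler of pqK space form}) with Proposition \ref{CNS max Kaehler in loc.sym. pqK to be loc. sym.}, and the second assertion by the same Leibniz-rule cancellation using $\{C_X,\mathcal{J^\epsilon}\}=0$. The only difference is a sign: you write $(\nabla_XC)_Y=-\epsilon\,\omega(X)\mathcal{J^\epsilon}\circ C_Y$, consistent with Proposition \ref{CNS parallel and curvature invariant of a max Kaehler in pqK}, while the paper's proof has the opposite sign, which is immaterial since the bracket vanishes either way.
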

\begin{proof} It is sufficient to prove only the last
statement. Assume that $\nabla'h=0$. Then $\nabla_XC= \epsilon
\omega(X)\mathcal{J^\epsilon} C \quad$ and $\quad
\nabla_X[C,C](Y,Z)=[\nabla_XC_Y,C_Z]+[C_Y,\nabla_XC_Z]=$\\
$ \epsilon
\omega(X)\Big([\mathcal{J^\epsilon}C_Y,C_Z]+[C_Y,\mathcal{J^\epsilon}C_Z]\Big)=0 \quad
$  since $C_Y$ anticommutes with $\mathcal{J}^\epsilon$.

\end{proof}

\section{The parallel cubic line bundles of a maximal parallel $\epsilon$-K\"ahler submanifold}

For a deep analysis of parallel submanifolds of a quaternionic manifold refer to \cite{AM}, \cite{20}.
We will assume that $\widetilde M^{4n}$ is a \pqK manifold with
the reduced scalar curvature $\nu\neq 0$. We consider first the
case  that $(M^{2n},J,g)$ is a   parallel totally complex
submanifold of $\widetilde M$. From Proposition (\ref{CNS parallel and curvature invariant of a max Kaehler in pqK})
\[P_{XY}:=(\nabla_X C)_Y - \omega(X)J\circ C_Y=0 \qquad X,Y\in TM.
\]
We will assume moreover that $M$ is not a totally geodesic submanifold, i.e. $h \neq 0$.
By Proposition  (\ref{CNS parallel and curvature invariant of a
max Kaehler in pqK}) $M$ is a curvature invariant submanifold
($R^{\bot T}=0$). We denote by  $T^{\mathbb C}M=T^{1,0}M+T^{0,1}M$
the decomposition of the complexified tangent bundle into
holomorphic and antiholomorphic parts and by $T^{*\mathbb
C}M=T^{*1,0}M+T^{*0,1}M$ the dual decomposition of the cotangent
bundle. \par Denote by $S^{(1)\mathbb C}_J$ the complexification
of the bundle $S^{(1)}_J$ (see Corollary (\ref{derivative of C}))
and by $g\circ S_J^{(1)\mathbb C}$ the associated subbundle of the
bundle $S^3(T^*M)^{\mathbb C}$. We will call ${S^3(T^*M)}^{\mathbb
C}$ the \textbf{bundle of complex cubic forms}.

\begin{prop} \label{parallel max Kaehler has got a cubic line bundle}
Let $(M^{2n},J,g)$ be a parallel K\"ahler submanifold of a
para-qua\-ter\-nio\-nic K\"ahler manifold $\widetilde M^{4n}$ with $\nu
\neq 0$.  If it is not totally geodesic then on $M$ there is a pair of
canonically defined parallel complex line subbundles $L$ (resp. $\overline{L}$) of the
bundle $S^3(T^{*1,0}M)$  (resp. $S^3(T^{*0,1}M)$) of holomorphic (resp. antiholomorphic) cubic forms such that the
curvature induced by the Levi-Civita
connection  has the curvature form
\begin{equation} \label{curvature of the parallel subbundle L in Kaehler manifolds}
R^L=-i\nu F, \qquad (\text{resp.} \; R^{L'}=i\nu F).
\end{equation}
\end{prop}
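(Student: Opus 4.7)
The plan is to extract from the shape tensor $C$ a parallel complex line subbundle $L$ of $S^3 T^{*1,0}M$ whose induced connection has curvature determined by the structure identity $d\omega=\nu F$. First I would use Lemma (\ref{max Kaehler proprieties})(5): both $gC$ and $gC\circ\mathcal J$ are totally symmetric $(0,3)$-tensors on $M$. Extending $gC$ complex-linearly to $T^{\mathbb C}M$ and exploiting the ensuing identity $gC(\mathcal J X,Y,Z)=gC(X,\mathcal J Y,Z)$ on arguments of pure Hodge type, the $(2,1)$- and $(1,2)$-components are forced to satisfy $i\cdot gC = -i\cdot gC$ and hence vanish. Therefore $gC=q+\bar q$ with $q\in\Gamma(S^3T^{*1,0}M)$ and $\bar q\in\Gamma(S^3T^{*0,1}M)$ its complex conjugate.

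Next I would translate the parallelism condition into a first-order equation for $q$. Proposition (\ref{CNS parallel and curvature invariant of a max Kaehler in pqK}) applied with $\epsilon=-1$ and $P=0$ gives $(\nabla_X C)_Y=\omega(X)\,\mathcal J\circ C_Y$. Because $C_Y$ anticommutes with $\mathcal J$, it sends $T^{1,0}M$ into $T^{0,1}M$; for $Y,Z,W\in T^{1,0}M$ one then computes
\[
(\nabla_X gC)(Y,Z,W)=\omega(X)\,g(\mathcal J C_YZ,W)=-i\,\omega(X)\,q(Y,Z,W).
\]
Since the Levi-Civita connection of the K\"ahler manifold $M$ preserves the $(p,q)$-decomposition, $\nabla_X\bar q$ remains of type $(0,3)$ and contributes nothing on $T^{1,0}$-arguments, so $\nabla_X q=-i\,\omega(X)\otimes q$. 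Complex conjugation yields $\nabla_X\bar q=i\,\omega(X)\otimes\bar q$.

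The third step is globalization. Under the change of adapted basis (\ref{matrix of change of adapted basis on Kaehler submanifold }), the transformation law $C'=\cos\theta\,C+\sin\theta\,\mathcal J\circ C$ shows, upon evaluation on three $T^{1,0}$-vectors, that $q'=e^{-i\theta}q$. Hence $L:=\operatorname{span}_{\mathbb C}(q)$ is a well-defined global complex line subbundle of $S^3T^{*1,0}M$, and the relation $\nabla q=-i\omega\otimes q$ shows that the Levi-Civita connection preserves $L$. The induced connection $\nabla^L$ has local form $-i\omega$ in the trivialization $q$, and its $(0,1)$-part equips $L$ with a holomorphic structure for which $q$ is a local holomorphic section. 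By Proposition (\ref{second Gauss-Codazzi follows from first in max Kaehler submanifold}) one has $d\omega=\nu F$ on $M$, so
\[
R^L=d(-i\omega)=-i\nu F,
\]
and the conjugate argument applied to $\bar q$ yields $R^{\bar L}=i\nu F$.

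The main obstacle is the gauge-compatibility step: one must verify that the transformation of the $\mathfrak{so}(2,1)$-connection form $\omega_1$ on $Q|_M$ under change of adapted basis aligns with the $U(1)$-gauge transformation $q\mapsto e^{-i\theta}q$ on $L$, so that $\operatorname{span}_{\mathbb C}(q)$ and the connection $\nabla^L$ descend to globally defined objects on $M$. Once this compatibility is established, the remaining content is purely algebraic and flows from Lemma (\ref{max Kaehler proprieties}), the parallelism identity $(\nabla_X C)_Y=\omega(X)\,\mathcal J\circ C_Y$, and the structure equation $d\omega=\nu F$.
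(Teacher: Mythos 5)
Your proposal is correct and follows essentially the same route as the paper: decompose $gC=q+\bar q$ into holomorphic and antiholomorphic cubic forms (your Hodge-type argument from the total symmetry of $gC$ and $gC\circ\mathcal J$ is just a direct rephrasing of the paper's Lemma on $g\circ S^{(1)\mathbb C}_J$), derive $\nabla_Xq=-i\omega(X)q$ from the parallelism identity, globalize via $q'=e^{-i\theta}q$ under change of adapted basis, and conclude with $d\omega=\nu F$. The "gauge-compatibility obstacle" you flag is automatic once $L=\operatorname{span}_{\mathbb C}(q)$ is globally defined and preserved by the (global) Levi-Civita connection; the only point worth adding is the one the paper does make, namely that $q$ is nowhere zero (a parallel-type section of a line bundle vanishing at one point would vanish identically, contradicting $h\not\equiv 0$), so that $L$ really is a line bundle.
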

\begin{proof} We first prove the
following
\begin{lemma} \label{decomposition of cubic form}
$g\circ S^{(1)\mathbb C}_J = S^3(T^{*1,0}M)+S^3(T^{*0,1}M)$.
\end{lemma}

\begin{proof} Since $J|_{T^{1,0}M}=i \, Id, \; J|_{T^{0,1}M}=-i\, Id$, one has
$$
S^{\mathbb C}_J = \text{Hom}(T^{1,0}M,T^{0,1}M)+\text{Hom}(T^{0,1}M,T^{1,0}M)
$$
where $S^{\mathbb C}_J $ is  the space $S^{\mathbb C}_J $ of
complex en\-do\-mor\-phisms of $T_x^{\mathbb C}M$ which anticommute with $J$.
In fact, let  $X \in T^{1,0}M$, $\; A \in S^{\mathbb C}_J $, and
denote by $AX=Y=Y^{1,0}+ Y^{0,1}, \; Y^{1,0} \in T^{1,0} M,  \;
Y^{0,1} \in T^{0,1} M$. Then $AJX=iAX=iY^{1,0} +iY^{0,1}$ whereas
$-JAX=-JY^{1,0} - JY^{0,1} =-iY^{1,0} +iY^{0,1}$ which implies
$Y^{1,0}=0$. Analogously, if $X \in T^{0,1}M$, we get $Y^{0,1}=0$.

Hence the space $ g\circ S^{\mathbb C}_J $ of symmetric bilinear
forms, associated with $S^ {\mathbb C}_J$ is
$$
g\circ S^{\mathbb C}_J = S^2(T^{*1,0}M)+S^2(T^{*0,1}M)\, .
$$
In fact, for $X \in T^{1,0}M$ and $A \in  S^{\mathbb C}_J$,
\[<AX,Y>= -<J^2AX,Y>=<JAX,JY>=-<AJX,JY>=
-i<AX,JY>.\]
This implies that $JY=iY$ i.e $Y \in T^{1,0}M$.  Analogously, if
$X \in T^{0,1}M$ then $Y \in T^{0,1}M$. This proves the lemma.
\end{proof}
Using this lemma we can decompose the cubic form $gC \in
g\circ S^{(1)}_J$ associated with the shape operator $C=J_2h$ into
holomorphic and antiholomorphic parts:
$$
gC=q+{\overline q} \in S^3(T^{*1,0}M) + S^3(T^{*0,1}M) \, .
$$
Since, by assumption, $\nabla_XC = \omega(X)J\circ C$ we have
$$ g \nabla_XC =\nabla_X (gC) = \nabla_Xq + \nabla_X \bar q =
\omega(X)g(J \circ C).$$
 For $ Y,Z  \in T^{1,0}M $, we get
$$ \nabla_X(g C)(Y,Z) =\omega(X)g(JC(Y,Z))
=-i \omega(X)gC(Y,Z)
$$
since $ C(Y,Z) \in T^{0,1}M $ and $JC(Y,Z)= -iC(Y,Z) $. This shows
that
\begin{equation} \label{derivative of q (olomorphic part of gC)}
\nabla_Xq =-i\omega(X)q.
\end{equation}
Under the changing of adapted basis $(J_\alpha)\rightarrow
(J'_\alpha)$ represented in basis $(J_1,J_2,J_3)$ by the first matrix in
(\ref{matrix of change of adapted basis on Kaehler submanifold }),
we have $J'_2=\cos \theta J_2 + \sin\theta J_3$ and the cubic form $q$ changes by
$q \rightarrow {q'} = (\cos\theta - i \sin\theta)q \,$.
In fact, for $Y,Z \in T^{1,0}M$,
$$
q'(Y,Z) = \cos\theta J_2 \circ h(Y,Z)  + \sin\theta J_1 (J_2 \circ
h(Y,Z)) = \cos\theta q - i \sin\theta  q
$$
since $J_2 \circ h(Y,Z) \in T^{0,1}M$. Analogously $\overline{q} \rightarrow {\overline{q}'} = (\cos\theta + i \sin\theta) \overline{q}$.

Note also that the cubic forms $q$ and $\overline{q}$ are not $0$ at any point, since by
assumption the second fundamental form $h$ is parallel and not
zero. These show that the com\-plex line bundle
$L=\text{span}_{\mathbb C}(q) \subset S^3(T^{*1,0}M)$ (resp. $\overline{L}=\text{span}_{\mathbb C}(\overline{q}) \subset S^3(T^{*0,1}M)$)
 is globally
defined and parallel, i.e. the Levi-Civita  connection $\nabla$
preserves $L$ (resp. $\overline{L}$) and defines a connection $\nabla^L$ in $L$ (resp. $\nabla^{\overline{L}}$ in $\overline{L}$).  Using
(\ref{derivative of q (olomorphic part of gC)}), we calculate the
curvature of $\nabla^L$ as follows:
$$
\begin{array}{ll}
R^L(X,Y)q &= \Big([\nabla^L_X,\nabla^L_Y]-\nabla^L_{[X,Y]}\Big)q = \Big([\nabla_X,\nabla_Y]-\nabla_{[X,Y]}\Big)q \\
& = -\nabla_X\Big( \omega(Y)iq\Big) + \nabla_Y\Big(
\omega(X)iq\Big)
+\omega([X,Y])iq \\


&= -d\omega(X,Y)iq  -\omega(Y) \omega(X) q +  \omega(X) \omega(Y)
q\\
& = -d\omega(X,Y)iq= -\nu F(X,Y)iq
\end{array}
$$
Analogously it is  $\nabla_X {\overline q} =i\omega(X) {\overline q}$ and
$R^{\overline{L}}(X,Y) {\overline q}= \nu F(X,Y)i{\overline q} \,$.
\end{proof}

\begin{defi} A parallel subbundle $L\subset
S^3(T^{*1,0}M)$ with the curvature form (\ref{curvature of the
parallel subbundle L in Kaehler manifolds})  on a K\"ahler
manifold $M$ is called a \textbf{parallel cubic line bundle of
type} $-\nu$.
\end{defi}

\begin{coro} \label{parallel max Kaehler has got a cubic line bundle of type nu}
A parallel maximal K\"ahler not totally geodesic submanifold $M$
of a \pqK manifold $\widetilde M$ with $\nu \neq 0$ has a pair of parallel
cubic line bundles of type $\pm \nu$.
\end{coro}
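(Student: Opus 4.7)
The plan is to obtain this corollary as an immediate consequence of the preceding Proposition, which already constructs the pair of parallel line bundles $L$ and $\overline{L}$ together with the explicit computation of their curvatures. The only real content of the corollary is to match the conclusion of that proposition to the freshly introduced terminology "parallel cubic line bundle of type $-\nu$''.

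First I would invoke the preceding Proposition, applied to the parallel, not totally geodesic, maximal K\"ahler submanifold $(M^{2n},J,g)\subset\widetilde M^{4n}$. This produces a globally defined, parallel complex line subbundle $L=\operatorname{span}_{\mathbb C}(q)\subset S^3(T^{*1,0}M)$ with connection $\nabla^L$ induced by the Levi-Civita connection and curvature form $R^L=-i\nu F$. By the definition just given, such a bundle is by definition a parallel cubic line bundle of type $-\nu$.

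Next I would apply the same proposition to obtain the antiholomorphic counterpart $\overline L=\operatorname{span}_{\mathbb C}(\overline q)\subset S^3(T^{*0,1}M)$, parallel with respect to $\nabla$, with curvature $R^{\overline L}=i\nu F$. Taking complex conjugation, or equivalently viewing $\overline L$ as sitting inside $S^3(T^{*1,0}\overline M)$ (where $\overline M$ is $M$ with the opposite complex structure $-J$), the identity $R^{\overline L}=i\nu F=-(-\nu)iF$ shows that $\overline L$ is a parallel cubic line bundle of type $+\nu$ in the sense of the definition. This produces the announced pair of parallel cubic line bundles of types $\pm\nu$.

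There is no real obstacle here: all the work—existence, global definition, parallelism and the explicit curvature computation—has been carried out in the preceding proposition through the formulas $\nabla_X q=-i\omega(X)q$, $\nabla_X\overline q=i\omega(X)\overline q$ and the structure equation $d\omega=\nu F$ of Proposition \ref{second Gauss-Codazzi follows from first in max Kaehler submanifold}. The corollary is thus a straightforward translation into the language of the new definition, and the only thing to check is the sign convention identifying $R^{\overline L}=i\nu F$ with curvature of type $+\nu$.
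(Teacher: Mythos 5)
Your proposal is correct and coincides with the paper's (implicit) argument: the corollary is stated without proof precisely because it is an immediate restatement of Proposition \ref{parallel max Kaehler has got a cubic line bundle} in the language of the Definition of a parallel cubic line bundle of type $-\nu$. Your extra care in explaining how $\overline{L}\subset S^3(T^{*0,1}M)$ with $R^{\overline{L}}=i\nu F$ qualifies as the bundle ``of type $+\nu$'' (via conjugation, i.e.\ reversing the complex structure) is a sensible reading of the paper's slightly loose phrase ``of type $\pm\nu$'', and nothing further is needed.
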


Let  consider now the case that $(M^{2n},K,g)$ is a parallel,
totally para-complex, not totally geodesic  submanifold of
$\widetilde M$.  Then  \[P_{XY}=(\nabla_X C)_Y + \omega(X)K \circ C_Y=0 \quad X,Y\in TM \qquad  \text{ and} \qquad  h \not \equiv 0. \]

By Proposition  (\ref{CNS parallel and curvature invariant of a
max Kaehler in pqK}) $M$ is a curvature invariant submanifold.
Let $TM=T^{+}M+T^{-}M$ be the bi-Lagrangean decomposition of
the  tangent bundle into the $(+1)$ and $(-1)$ eigenspaces of $K$
 and by $T^*M
=(T^{*+}M)+(T^{*-}M)$ the dual decomposition of the cotangent
bundle. We will call ${S^3(T^*M)}$ the \textbf{bundle of real
cubic forms}. \par We recall that  $C \in S^{(1)}_K$ (see
Corollary (\ref{derivative of C})) and we denote by $g\circ
S_K^{(1)}$ the associated subbundle of the bundle $S^3(T^*M)$.
Following the same line of proof of lemma (\ref{decomposition of
cubic form}) we can affirm that
\begin{lemma} $g\circ S^{(1)}_K =
S^3(T^{*+}M)+S^3(T^{*-}M)$.
\end{lemma}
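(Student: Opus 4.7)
The plan is to adapt, essentially verbatim, the argument used for Lemma (decomposition of cubic form) in the complex case, replacing the $\pm i$-eigenspace picture with the $\pm 1$-eigenspace picture of the para-complex structure $K$. The whole argument turns on two facts: the anticommutation relation $\{A,K\}=0$ swaps the two eigenspaces of $K$, and each eigenspace of $K$ is totally isotropic for $g$.

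First I would establish the endomorphism-level decomposition $S_K = \mathrm{Hom}(T^{+}M,T^{-}M)+\mathrm{Hom}(T^{-}M,T^{+}M)$. If $A\in S_K$ and $X\in T^{+}M$, then $K(AX)=-AKX=-AX$, so $AX\in T^{-}M$; the symmetric argument for $X\in T^{-}M$ gives $AX\in T^{+}M$. Next I would note that $T^{+}M$ and $T^{-}M$ are both Lagrangean for $g$. Indeed, since $K$ is skew-symmetric with $K^2=Id$, one has $g(KX,KY)=-g(X,Y)$; taking $X,Y\in T^{+}M$ (resp.\ $T^{-}M$) gives $g(X,Y)=-g(X,Y)$, forcing $g|_{T^{\pm}M\times T^{\pm}M}=0$.

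Combining these two facts I can identify $g\circ S_K$. Given $A\in S_K$ and $X\in T^{+}M$, the vector $AX$ lies in $T^{-}M$; by isotropy of $T^{-}M$, $g(AX,Y)=0$ whenever $Y\in T^{-}M$. Hence $g(A\cdot,\cdot)$ restricted to $T^{+}M\times T^{-}M$ is zero, and symmetrically on $T^{-}M\times T^{+}M$, yielding the block decomposition
\[
g\circ S_K \;=\; S^2(T^{*+}M)\;+\;S^2(T^{*-}M).
\]

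Finally I would lift this to the first prolongation. For $C\in S^{(1)}_K$, part (5) of Lemma (max Kaehler proprieties) (which was stated for $\mathcal{J}^{\epsilon}$ in either sign and therefore applies to $K$) tells us that $gC$ is totally symmetric in its three arguments. Since for each fixed $X$ the endomorphism $C_X$ lies in $S_K$, the degree-2 block decomposition above applied slot-by-slot forces $gC(X,Y,Z)$ to vanish as soon as the three arguments do not all lie in the same eigenspace of $K$. Consequently
\[
g\circ S^{(1)}_K \;=\; S^3(T^{*+}M)\;+\;S^3(T^{*-}M),
\]
as claimed. There is no real obstacle here: the argument is purely algebraic and mirrors the complex proof; the only point requiring care is the sign bookkeeping that makes $T^{\pm}M$ Lagrangean (the para-Kähler analog of the isotropy of $T^{1,0}M$ and $T^{0,1}M$), after which everything falls out mechanically.
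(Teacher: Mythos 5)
Your proof is correct and takes essentially the same route as the paper, whose own ``proof'' of this lemma is simply the instruction to repeat the argument of the complex-case Lemma (decomposition of cubic form) with the $\pm 1$-eigenspaces of $K$ in place of $T^{1,0}M$ and $T^{0,1}M$ --- exactly what you do, your isotropy argument for $T^{\pm}M$ being the real form of the paper's computation $\langle AX,Y\rangle=-i\langle AX,JY\rangle$. Like the paper, you in fact only establish the inclusion $g\circ S^{(1)}_K\subset S^3(T^{*+}M)+S^3(T^{*-}M)$, which is all that is used in the sequel.
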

 We can then decompose the cubic form $gC \in g\circ
S^{(1)}_K$ associated with the shape operator $C=J_2 h$ according
to :
$$
gC=q^{+} + q^{-} \in S^3(T^{*+}M) + S^3(T^{*-}M) \, .
$$

\begin{prop} \label{parallel max para-Kaehler has got a cubic line bundle 2}
 Let $(M^{2n},K)$ be a parallel para-K\"ahler submanifold of a para-quaternionic K\"ahler manifold
$\widetilde M^{4n}$ with $\nu \neq 0$.  If it is not totally
geodesic then on $M$ the pair of  real line subbundle $L^+:=\R q^+
\subset S^3(T^{*+}M)$ and $L^-:=\R q^- \subset S^3(T^{*-}M)$ are
globally defined and parallel, i.e  the Levi-Civita connection
$\nabla$ pre\-ser\-ves $L^+$ (resp. $L^-$)  and defines a connection
$\nabla^{L^+}$ on $L^+$  (resp. $\nabla^{L^-}$ on $L^-$) whose
curvature is
\begin{equation}
\begin{array}{l}
\label{curvature of the parallel subbunble P in para-Kaehler}
R^{L^+}= \nu F, \quad (\text {resp. } \quad  R^{L^-}= -\nu F)
\end{array}
\end{equation}
 where $F = g\circ K$ is the K\"ahler form of $M$.
\end{prop}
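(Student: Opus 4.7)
The plan is to imitate the strategy used for the complex case in Proposition \ref{parallel max Kaehler has got a cubic line bundle}, adapting it to the bi-Lagrangean splitting $TM=T^+M\oplus T^-M$ of the para-K\"ahler structure. First, since $M$ is parallel and not totally geodesic, Proposition \ref{CNS parallel and curvature invariant of a max Kaehler in pqK} (1) gives, in the para-case ($\epsilon=+1$, $\mathcal J^\epsilon=K$),
\[
\nabla_X C = -\omega(X)\, K\circ C, \qquad h\not\equiv 0.
\]
Because $K$ is parallel on $M$ (one of the defining relations of a $\epsilon$-K\"ahler submanifold, equation (\ref{nabla J1,J2,J3})), both eigenbundles $T^\pm M$ are parallel with respect to $\nabla$. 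Combined with the lemma immediately preceding the proposition ($g\circ S^{(1)}_K=S^3(T^{*+}M)+S^3(T^{*-}M)$), the decomposition $gC=q^++q^-$ is preserved by $\nabla$, so I can compute $\nabla_X q^\pm$ separately.

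Next I would derive the analogue of (\ref{derivative of q (olomorphic part of gC)}). Since $C$ anticommutes with $K$, it interchanges $T^+M$ and $T^-M$. Hence for $Y,Z\in T^+M$ one has $C_YZ\in T^-M$, so $KC_YZ=-C_YZ$, and therefore
\[
(\nabla_X q^+)(Y,Z,W)=-\omega(X)\,g(KC_YZ,W)=\omega(X)\,q^+(Y,Z,W)\qquad(W\in T^+M),
\]
i.e. $\nabla_X q^+=\omega(X) q^+$. The completely symmetric calculation on $T^-M$, using $KC_YZ=+C_YZ$ there, gives $\nabla_X q^-=-\omega(X)q^-$. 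To see that the real lines are intrinsic I would check independence from the adapted basis: under the change governed by the hyperbolic matrix $B_1$ of (\ref{matrix of change of adapted basis on Kaehler submanifold }), $J_2'=\cosh\theta\,J_2+\sinh\theta\,J_3=\cosh\theta\,J_2+\sinh\theta\,KJ_2$, so $C'=\cosh\theta\,C+\sinh\theta\,K\circ C$; restricting to $T^+M\times T^+M$ and to $T^-M\times T^-M$ respectively gives $q'^{+}=e^{-\theta}q^+$ and $q'^{-}=e^{\theta}q^-$, both real scalar multiples, so the real line subbundles $L^\pm=\mathbb R q^\pm$ are globally defined; non-vanishing of $q^\pm$ at every point follows from $h\not\equiv 0$ together with the parallelism of $h$.

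Finally, the curvature computation mirrors the complex case. Using $\nabla_X q^+=\omega(X)q^+$,
\[
R^{L^+}(X,Y)q^+ = \big(X(\omega(Y))-Y(\omega(X))-\omega([X,Y])\big)q^+ = d\omega(X,Y)\,q^+,
\]
the quadratic terms in $\omega$ cancelling; the identity $d\omega=\nu F$ established in Proposition \ref{second Gauss-Codazzi follows from first in max Kaehler submanifold} then gives $R^{L^+}=\nu F$. Replacing $\omega(X)$ by $-\omega(X)$ in the same computation yields $R^{L^-}=-\nu F$. The only genuinely new ingredient relative to the complex case is the sign bookkeeping induced by the real (rather than imaginary) eigenvalues of $K$, which is what flips the curvature from $\pm i\nu F$ to $\pm \nu F$; this is the place where I would be most careful, but no real obstacle is expected.
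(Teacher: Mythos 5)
Your proof is correct and follows essentially the same route as the paper's: deriving $\nabla_X q^{\pm}=\pm\omega(X)q^{\pm}$ from the parallelism condition and the anticommutation of $C$ with $K$, checking the hyperbolic transformation law $q'^{\pm}=e^{\mp\theta}q^{\pm}$ under change of adapted basis, and concluding with the curvature computation via $d\omega=\nu F$. The paper's own proof simply says ``following the same line of proof of the previous K\"ahler case''; you have filled in exactly those details, with the sign bookkeeping matching the paper's displayed formulas.
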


\begin{proof}
Following the same line of proof of the previous K\"ahler case, we have
\begin{equation} \label{derivative of q^+ and q^-}
\nabla_X q^{+} =\omega(X) q^{+}; \qquad  \nabla_X q^{-}
=-\omega(X) q^{-}.
\end{equation}


Under a changing of the adapted basis $(J_\alpha)\rightarrow
(J'_\alpha)$, represented in basis $(J_1,J_2,J_3)$ by the second matrix in
(\ref{matrix of change of adapted basis on Kaehler submanifold }),
we have that
\[
q^{+} \rightarrow q'^{+} = (\cosh \theta - \sinh \theta)q^{+}
\qquad \text{and} \qquad  q^{-} \rightarrow {q'^{-}} = (\cosh \theta +\sinh \theta) q^{-}.
\]

Note that  $q^+\neq 0$ and $q^-\neq 0$ at any point, since by
assumption the second fundamental form $h$ is parallel and not
zero and the metric $g$ is non degenerate on $M$. Then the real
line  bundles $L^+:=\R q^+ \subset S^3(T^{*+}M)$ and $L^-:=\R q^-
\subset S^3(T^{*-}M)$ are globally defined and parallel, i.e  the
Levi-Civita connection $\nabla$ preserves $L^+$ (resp. $L^-$)  and
defines a connection $\nabla^{L^+}$ on $L^+$  (resp.
$\nabla^{L^-}$ on $L^-$). Moreover
$$
\begin{array}{ll}
R^{L^+} (X,Y)q^+ &=  \Big([\nabla^{L^+}_X,\nabla^{L^+}_Y]-\nabla^{L^+}_{[X,Y]}\Big)q^+
= \Big([\nabla_X,\nabla_Y]-\nabla_{[X,Y]}\Big)q^+\\

&= \nabla_X\Big( \omega(Y)(q^{+})\Big) - \nabla_Y\Big(
\omega(X)(q^{+}) \Big)
-\omega([X,Y])(q^{+}) \\

&= d\omega(X,Y)(q^{+} )= \nu F(X,Y)q^{+}\, .
\end{array}
$$
Analogously $R^{L^-} (X,Y)q^-=-\nu F(X,Y)q^{-}$.
\end{proof}

\begin{Acknow}
I owe my deepest gratitude to   Professor Dmitri  Alekseevsky whose  precious  guidance have been fundamental to accomplish  this research.
\end{Acknow}



\end{document}